\newtheorem{theorem}{Theorem}
\newtheorem{lemma}[theorem]{Lemma}
\newtheorem{corollary}[theorem]{Corollary}
\newtheorem{observation}[theorem]{Observation}
\newtheorem{problem}[theorem]{Problem}
\newtheorem{proposition}[theorem]{Proposition}
\newtheorem{claim}{Claim}
\newtheorem{conjecture}[theorem]{Conjecture}
\newtheorem{case}{Case}
\newtheorem{subcase}{Subcase}[case]
\numberwithin{equation}{section}
\let\oldbibliography\thebibliography
\renewcommand{\thebibliography}[1]{%
  \oldbibliography{#1}%
  \setlength{\itemsep}{-2pt}%
  \setlength{\baselineskip}{15pt}
  \setlength{\lineskiplimit}{-\maxdimen}
}
\numberwithin{equation}{section}
\tikzstyle{none}=[inner sep=0mm]
\tikzstyle{blacknode}=[fill=black, draw=black, shape=circle, minimum
\tikzstyle{whitenode}=[fill={rgb,255: red,245; green,245; blue,245},
\tikzstyle{whitenode_v1}=[fill={rgb,255: red,245; green,245; blue,245},
\tikzstyle{blacknode_v1}=[fill=black, draw=black, shape=circle, minimum
\tikzstyle{black_bold}=[-, draw=black, line width=0.6mm]
\tikzstyle{blackedge}=[-, draw=black, fill=none, line width=0.3mm]
\tikzstyle{blackedge_thick}=[-, draw=black, line width=0.4mm, fill=none]
\tikzstyle{rededge_thick}=[-, line width=0.45mm, draw=red]
\tikzstyle{blue_thick}=[-, line width=0.5mm, draw=blue]
\tikzstyle{grayedge}=[-,line width=0.2mm, draw={rgb,255: red,64; green,64; blue,64}]
\begin{document}

\title{The minimum size of maximal bipartite IC-plane graphs with given connectivity \thanks{The work was supported by the National Natural Science Foundation of China (Grant Nos. 12271157, 12371346)
 and the Postdoctoral Science Foundation of China (Grant No. 2024M760867).}}

\author[a]{Guiping Wang\thanks{wgp@hunnu.edu.cn}}
\author[a]{Yuanqiu Huang \thanks{Corresponding author: hyqq@hunnu.edu.cn}}
\author[b]{Zhangdong Ouyang\thanks{oymath@163.com}}
\author[c]{Licheng Zhang\thanks{lczhangmath@163.com}}
\affil[a]{ College of Mathematics and Statistics, Hunan Normal University, Changsha, 410081, P.R.China}
\affil[b]{College of Mathematics and Statistics, Hunan First Normal University, Changsha, 410205, P.R.China}
\affil[c]{School of Mathematics, Hunan University, Changsha, 410082, China}

\renewcommand*{\Affilfont}{\small\it} 
\date{}
\maketitle

\begin{abstract}

Recently, the problem of establishing bounds on the edge density of 1-planar graphs, including their subclass IC-planar graphs,  has received considerable attention.
 In 2018, Angelini et al. showed that any $n$-vertex bipartite IC-planar graph has at most $2.25n-4$ edges, which implies that bipartite IC-planar graphs have vertex-connectivity at most 4.
In this paper,  we prove that any $n$-vertex maximal bipartite IC-plane graph with connectivity $2$ has at least $\frac{3}{2}n-2$ edges,
and those with connectivity $3$ has at least $2n-3$ edges. All the above lower bounds are tight. For 4-connected maximal bipartite IC-planar graphs, the question of determining a non-trivial lower bound on the size remains open.\\

\noindent {\bf Mathematics Subject Classification:} 05C10, 05C62 

\noindent \textbf{Keywords} minimum size; drawing; bipartite IC-plane graph; connectivity.

\end{abstract}

\section{Introduction}

\noindent 
We consider finite simple connected graphs.  Let $G=(V, E)$ be a graph. Its order and size are $|V|$ and $|E|$, respectively.
Throughout, we use $V(G)$ and $E(G)$ to denote the vertex set and edge set of $G$, and write $n(G)$ and $e(G)$ for its order and size, respectively.
The minimum degree of $G$ is denoted by $\delta(G)$.
Let $S$ be a subset of $V(G)$. 
The {\it connectivity} $\kappa(G)$ is the minimum integer $|S|$ such that $G-S$ is disconnected or $n(G- S)=1$.
A graph $G$ is {\it $k$-connected} if $\kappa(G)\ge k$.

A {\it drawing} of a graph $G=(V,E)$ is a mapping $D$ that assigns to each vertex in $V$ a distinct point in the plane 
and to each edge $uv$ in $E$ a continuous arc connecting $D(u)$ and $D(v)$.
We often make no distinction between a graph-theoretical object (such as a vertex, or an edge) and its drawing.
A drawing is {\it good} if no edge crosses itself, no two edges cross more than once, 
and no two edges incident with the same vertex cross each other. 
All drawings considered here are good.
A graph is {\it planar} if it admits a drawing in the plane such that no edges cross each other,
and such a drawing is called a {\it plane} graph.
A drawing of a graph is {\it $1$-planar} if each  of its edges is crossed at most once.
A graph is {\it $1$-planar} if it has a $1$-planar drawing.
A graph together with a $1$-planar drawing is called a {\it 1-plane graph}.
Given a graph family $\mathcal{G}$, $G$ is {\it maximal} in $\mathcal{G}$ if no edge can be added to $G$ without violating the defining class.

\subsection{Background on the density of 1-planar graphs}


\noindent 
Determining the minimum size of an $n$-vertex graph from a given graph class is a fundamental problem in graph theory. 
 Recently, the related study on the density of graphs with fixed drawings has received significant attention \cite{CB2017,MA2024,YQ2021,MH2023}.
Maximal planar graphs are always maximal plane graphs.
It is well-known that every maximal planar graph  with $n$ ($\geq3$) vertices is triangulated and has $3n-6$ edges.
The densest and sparsest maximal planar graphs coincide.
However, this property no longer hold for $1$-planar graphs.
Maximal $1$-planar graphs are always maximal $1$-plane graphs, but not vice versa.
Actually, maximal $1$-planar graphs form a proper subset of maximal $1$-plane graphs.
Many authors independently proved that any (maximal) 1-planar graph on $n$ vertices has at most $4n-8$ edges, 
and this bound is tight for $n=8$ and all $n\geq 10$ \cite{HB1984,IF2007,JP1997}.
Consequently, the connectivity of any $1$-planar graph is at most $7$.
Brandenburg et al.~\cite{FJ2013} constructed sparse maximal $1$-planar graphs with approximately $2.65n$ edges, as well as maximal $1$-plane graphs with about $2.33n$ edges.
On the other hand, they established lower bounds of $\frac{28}{13}n-O(1)$ for the size of maximal 1-planar graphs 
and $2.1n-O(1)$ for the size of maximal 1-plane graphs.
Later on, Barát and Tóth \cite{JB2018} improved both lower bounds to $\frac{20}{9}n-O(1)$.
Recently, Huang et al. \cite{YQ2025} provided a tight lower bound of $\lceil\frac{7}{3}n\rceil-3$,
which is achieved by an infinite family of $2$-connected graphs.
Extending these results, Ouyang et al. \cite{ZD2025} investigated the crossing number and minimum size of $k$-connected maximal $1$-plane graphs for $3\leq k\leq 7$.

These results reveal two interesting phenomena, which were first identified by Brandenburg et al.

\begin{itemize}
  \item  There is a significant gap between the upper and lower bounds for maximal $1$-planar graphs; 
  \item there exist maximal $1$-planar (or $1$-plane) graphs that are even sparser than maximal planar graphs.
\end{itemize}
Interestingly, both phenomena persist even in certain subclasses of 1-planar graphs.
In a $1$-plane graph (even when bipartite), any two pairs of crossing edges share at most three common end-vertices (see Figure \ref{common end}).
A graph is {\it IC-planar} (independent crossing planar) if it has a $1$-planar drawing such that any two pairs of crossing edges share no common end-vertex. 
IC-planar graphs have been extensively studied from recognition algorithm, coloring and structural perspectives. For example,
Brandenburg et al. proved that IC-planarity testing is NP-complete both in general and for a fixed rotation system \cite{FJ2016}, 
while triangulated IC-planar graphs can be recognized in cubic time \cite{FJ2018}.
Král' and Stacho \cite{DK2010} showed that every IC-planar graph has a chromatic number of at most $5$. 
Zhang and Liu \cite{XZ2013} showed that any $n$-vertex IC-planar graph has at most $3.25n-6$ edges,
and every IC-planar graph with the maximum number of edges is the union of a triangulated planar graph and a set of matching edges.
This upper bound is tight, which evidently holds for IC-plane graphs.
For any maximal IC-plane graph $G$, Ding, Huang and Lv \cite{ZP2025} recently showed that $e(G)\geq\frac{7}{3}n-\frac{14}{3}$.

\begin{figure}[h!]
  \centering
\begin{tikzpicture}[scale=0.5]
	\begin{pgfonlayer}{nodelayer}
		\node [style=whitenode] (0) at (2.5, -18) {};
		\node [style=whitenode] (1) at (6.5, -22) {};
		\node [style=blacknode] (2) at (6.5, -18) {};
		\node [style=blacknode] (3) at (2.5, -22) {};
		\node [style=whitenode] (4) at (0.5, -16) {};
		\node [style=whitenode] (5) at (8.5, -24) {};
		\node [style=blacknode] (6) at (8.5, -16) {};
		\node [style=blacknode] (7) at (0.5, -24) {};
		\node [style=blacknode] (8) at (5.325, -21) {};
		\node [style=whitenode] (9) at (5.325, -19) {};
		\node [style=whitenode] (10) at (4.975, -20.05) {};
		\node [style=none] (11) at (4.5, -16.75) {$c_{1}$};
		\node [style=none] (12) at (7.8, -20) {$c_{2}$};
		\node [style=none] (13) at (4.5, -23.25) {$c_{3}$};
		\node [style=none] (14) at (1.25, -20) {$c_{4}$};
		\node [style=none] (15) at (3.575, -20) {$c_{5}$};
		\node [style=none] (16) at (6, -20) {$c_{6}$};
	\end{pgfonlayer}
	\begin{pgfonlayer}{edgelayer}
		\draw [style={blackedge_thick}] (0) to (2);
		\draw [style={blackedge_thick}] (2) to (1);
		\draw [style={blackedge_thick}] (3) to (1);
		\draw [style={blackedge_thick}] (0) to (3);
		\draw [style={blackedge_thick}] (4) to (2);
		\draw [style={blackedge_thick}] (6) to (0);
		\draw [style={blackedge_thick}] (0) to (7);
		\draw [style={blackedge_thick}] (4) to (3);
		\draw [style={blackedge_thick}] (7) to (1);
		\draw [style={blackedge_thick}] (3) to (5);
		\draw [style={blackedge_thick}] (6) to (1);
		\draw [style={blackedge_thick}] (2) to (5);
		\draw [style={blackedge_thick}] (4) to (7);
		\draw [style={blackedge_thick}] (4) to (6);
		\draw [style={blackedge_thick}] (6) to (5);
		\draw [style={blackedge_thick}] (0) to (8);
		\draw [style={blackedge_thick}] (9) to (3);
		\draw [style={blackedge_thick}] (9) to (8);
		\draw [style={blackedge_thick}, in=15, out=-30, looseness=2.25] (10) to (3);
		\draw [style={blackedge_thick}] (10) to (8);
		\draw [style={blackedge_thick}] (7) to (5);
	\end{pgfonlayer}
\end{tikzpicture}

  \caption{A biparite 1-plane graph with six crossings, where $c_{1}$ and $c_{3}$ share no common end, $c_{1}$ and $c_{5}$ share one common end, 
  $c_{1}$ and $c_{2}$ share two common ends, and $c_{5}$ and $c_{6}$ share three common ends. }
  \label{common end}
\end{figure}
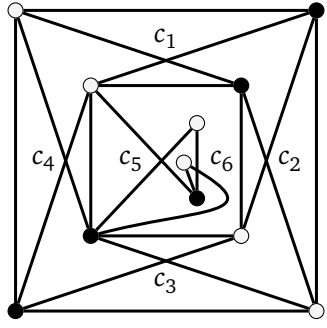

For  1-planar graphs, restricted to bipartite graphs, Karpov \cite{DK2014} showed that every $n$-vertex bipartite $1$-planar graph has at most $3n-8$ edges
and this upper bound is determined by its order.
When the sizes of the partite sets are specified,
Huang et al. \cite{YQ2021} proved that a bipartite 1-planar graph $G$ with partite sets of sizes $x$ and $y$, where $2\leq x \leq y$, has at most $ 2n(G)+4x-12$ edges.


A graph is {\it bipartite IC-plane} if it is bipartite and IC-plane.
 Angelini et al. \cite{PA2018} obtained a tight upper bound on the size of bipartite IC-planar graphs as follows. 
\begin{theorem}[ Angelini et al. \cite{PA2018}]\label{BIC}
Let $G$ be an $n$-vertex bipartite IC-planar graph, then $e(G)\leq\frac{9}{4}n-4$, and the bound is tight.
\end{theorem}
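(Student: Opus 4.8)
The plan is to prove the upper bound by combining two facts: the sparsity of bipartite \emph{planar} graphs and the strong structural restriction that \emph{independent} crossings impose on where the crossing edges can sit. Since adding edges can only increase the size, I may assume $G$ is edge-maximal, fix one IC-planar drawing $D$ of $G$, and let $c$ denote the number of crossings in $D$. In any good drawing the two edges forming a single crossing meet at four distinct vertices, so each crossing contributes four endpoints.

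The first key step is to bound $c$ using independence. Because no two crossing pairs share an endpoint, every vertex of $G$ is incident to at most one crossing edge; equivalently, the $2c$ crossing edges form a matching and the $4c$ endpoints arising from the $c$ crossings are pairwise distinct. Hence $4c \le n$, that is, $c \le n/4$. This is the step where the IC-planarity hypothesis (as opposed to mere $1$-planarity) is used decisively: in a general $1$-plane graph a vertex may be incident to several crossing edges, and the bound $c\le n/4$ would fail.

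The second step is the planarization. I would delete exactly one edge from each crossing pair, removing $c$ edges and producing a graph $G'$ that inherits a crossing-free drawing, so $G'$ is planar; it is also simple and bipartite, being a subgraph of $G$. A simple bipartite planar graph on $n\ge 3$ vertices has girth at least $4$, hence at most $2n-4$ edges by Euler's formula. Therefore $e(G)-c = e(G') \le 2n-4$, and combining this with $c \le n/4$ yields $e(G) \le 2n-4 + n/4 = \tfrac{9}{4}n-4$, as claimed.

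For tightness I would exhibit an infinite family attaining both inequalities with equality, i.e.\ graphs on $n=4k$ vertices whose crossing-free skeleton $G'$ is a bipartite \emph{quadrangulation} (so $e(G')=2n-4$) carrying exactly $k$ pairwise independent crossings. The natural crossing gadget is a pair of quadrilateral faces of $G'$ sharing an edge $a_1b_1$ (with $a_1,b_1$ in opposite parts): writing the two faces as $a_1b_1xy$ and $a_1b_1x'y'$, one adds a single chord, say from $x$ to $y'$, which respects the bipartition and crosses only $a_1b_1$, creating one independent crossing with endpoints $\{a_1,b_1,x,y'\}$. The main obstacle is then purely combinatorial bookkeeping: I must assemble a bipartite quadrangulation (for instance by gluing copies of an $8$-vertex block such as the cube $Q_3$) in which $k$ such gadgets can be chosen with \emph{pairwise disjoint} endpoint sets that exhaust all $4k$ vertices, while simultaneously certifying that the skeleton remains a simple quadrangulation and that each added chord crosses exactly its intended partner. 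Once this design is in place one gets $e(G)=2n-4+k=\tfrac{9}{4}n-4$, matching the upper bound; by contrast, the upper bound itself follows immediately from the two short counting steps above.
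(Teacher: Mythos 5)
This statement is quoted by the paper from Angelini et al.\ \cite{PA2018} and is not proved in the paper itself, so there is no internal proof to compare against. Your upper-bound argument is nevertheless correct and is the standard one: the observation that IC-planarity forces the $4c$ crossing endpoints to be pairwise distinct, giving $c\le n/4$, is exactly the paper's Lemma~\ref{lem:zhang}, and the planarization step together with the $2n-4$ bound for simple bipartite planar graphs is precisely the counting the paper itself performs in the proof of Theorem~\ref{3-connected} (via $G_p$ being at most a quadrangulation). Combining $e(G)=e(G')+c\le (2n-4)+n/4$ gives $\frac{9}{4}n-4$ as claimed, modulo the usual caveat that the $2n-4$ bound needs $n\ge 3$ (the theorem is anyway vacuous or trivial for smaller $n$).

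The one genuine gap is in the tightness claim: you describe the right gadget (a chord across two quadrilateral faces sharing an edge, crossing only that shared edge and respecting the bipartition) but explicitly defer the construction of a quadrangulation in which $n/4$ such gadgets have pairwise disjoint endpoint sets. That bookkeeping is not automatic and must actually be exhibited; for what it is worth, your suggestion of the cube $Q_3$ does work for $n=8$ (two chords such as $011\,100$ and $010\,101$ across disjoint face pairs give $14=\frac{9}{4}\cdot 8-4$ edges with independent crossings), and an infinite family can be obtained by gluing such blocks, but as written the tightness half of the statement is asserted rather than proved.
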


A bipartite IC-plane graph is \emph{maximal} if no edge can be added such that the resulting graph remains bipartite IC-plane and simple. 
For brevity, we refer to a maximal bipartite IC-plane graph as a MBICP-graph.
Motivated by the type of problem initiated by Brandenburg et al.~\cite{FJ2013}, 
it is natural to consider the problem of determining the minimum size of maximal bipartite IC-plane graphs with a given order. 
However, any star graph of order $n$, which is a maximal bipartite IC-plane graph with connectivity one, already attains the trivial lower bound of $n-1$. This observation suggests that the problem becomes more interesting when restricted to 2-connected graphs.
\begin{problem}\label{prob:1}
Given a fixed order and connectivity of at least two, what is the minimum size of a MBICP-graph?
\end{problem}

\subsection{Our contribution}

\noindent The main results of this paper are characterized by the following two theorems.

\begin{theorem}\label{2-connected}

Let $G$ be an $n$-vertex MBICP-graph. 
If $\kappa(G)\ge 2$, then $e(G)\geq \frac{3}{2}n-2$. 
Moreover, this bound is tight for all integers $n=4k$ with $k\geq1$.
\end{theorem}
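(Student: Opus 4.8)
The plan is to work with a fixed IC-plane drawing $D$ of $G$ and its planarization $G^{\times}$, the plane graph obtained by replacing each of the $c$ crossings of $D$ by a dummy vertex of degree $4$. Since $G$ is $2$-connected and crossings are independent, $G^{\times}$ is a connected (indeed $2$-connected) plane graph with $n+c$ vertices and $e(G)+2c$ edges; write $f$ for its number of faces. Euler's formula then gives $f=e(G)-n+c+2$, so the target inequality $e(G)\ge\tfrac32 n-2$ is equivalent to
\[
 f\;\ge\;\tfrac{n}{2}+c .
\]
I would also record the two global constraints coming from the two hypotheses: independence of crossings forces the $4c$ endpoints of crossing edges to be distinct, whence $c\le n/4$; and bipartiteness forbids edges inside a colour class, which is the engine that will drive maximality.

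The core of the argument is to extract strong local structure from maximality. First I would show that every face of $G^{\times}$ whose boundary meets no dummy vertex is a quadrilateral: such a face is bounded by an even cycle of real edges (bipartiteness), and if its length were at least $6$ one could join two non-adjacent boundary vertices of opposite colour by an arc drawn inside the face, producing a larger simple bipartite IC-plane graph and contradicting maximality. The delicate faces are those incident with a crossing. Around a crossing of $a_1b_1$ and $a_2b_2$ (with $a_i$ and $b_i$ in opposite classes) the four corners alternate between same-colour pairs ($a_1,a_2$ and $b_1,b_2$) and opposite-colour pairs; crucially, bipartiteness rules out the two same-colour ``kite'' edges, so a crossing can never be enclosed by a $4$-cycle. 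I would then classify the faces through a dummy vertex and prove that any face which is not short must be \emph{monochromatic}, meaning all of its real vertices lie in one colour class and it threads only through same-colour corners of crossings; for any other long face, maximality again permits the insertion of an opposite-colour chord. This step is what explains why large faces survive in sparse extremal graphs.

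With faces classified as quadrilaterals, triangular crossing-corners, and monochromatic faces, I would prove $f\ge\tfrac n2+c$ by discharging. Equivalently, assigning to each vertex the charge $3-\deg_G(v)$, the statement reads $\sum_v\bigl(3-\deg_G(v)\bigr)\le 4$, so that only degree-$2$ vertices carry positive charge. For a degree-$2$ vertex $v$ with neighbours $x,y$, the two faces at $v$ cannot offer an addable opposite-colour chord, which together with $2$-connectivity forces $x$ and $y$ to meet $v$ across quadrilateral or crossing-corner faces and hence to carry compensating excess degree or crossing incidences. I would then design rules sending the surplus of the vertices of degree at least $4$ and of the $c$ crossings to the degree-$2$ vertices, and verify that after discharging at most $4$ units of positive charge remain; this residual $4$ is precisely the surplus carried by the four unavoidable degree-$2$ vertices of the extremal graphs, and through Euler's formula it produces the additive constant $-2$.

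The main obstacle is the analysis of the faces incident with crossings and the bookkeeping that couples them to degree-$2$ vertices: I must use the independence of crossings not only to bound their number but to guarantee that no new (possibly crossing) edge can be routed near a crossing, and I must rule out the a priori possibility of many low-degree vertices clustering around monochromatic faces. Controlling the monochromatic faces, that is, showing they still contribute enough faces per crossing to keep $f$ large, is where the independence and bipartite hypotheses have to be combined most carefully, and is the step most likely to demand a genuine case analysis rather than a one-line estimate. Finally, for tightness I would exhibit, for each $n=4k$, a chain of $k$ crossed quadrilaterals (each a copy of $K_{2,2}$ drawn with its two edges crossing) in which consecutive gadgets are linked by two edges; such a graph is $2$-connected and maximal bipartite IC-plane, has exactly four vertices of degree $2$, and attains $e(G)=6k-2=\tfrac32 n-2$, confirming that the bound cannot be improved.
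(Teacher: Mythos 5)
Your global strategy (planarize, apply Euler's formula to reduce the bound to $f\ge\frac n2+c$, and then control the faces) is genuinely different from the paper's proof, which proceeds by induction on $n$ with explicit surgeries: it cuts the drawing along non-clean ties and along the clean $4$-cycles that enclose the long $6$-faces, applies the induction hypothesis to the two pieces, and only in the terminal subcase falls back on the Euler-formula count you propose (there it yields $e(G)\ge 2n-4-c\ge\frac74n-4$ using $c\le n/4$). Your tightness construction is essentially the paper's family $H_k$ of nested ties and is fine.

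However, there is a genuine gap in the structural step on which your discharging rests. You claim that every ``long'' face through a crossing must be monochromatic, because otherwise ``maximality again permits the insertion of an opposite-colour chord.'' This inference is false: maximality only forces two opposite-coloured vertices on a face boundary to be \emph{adjacent in $G$} (this is Lemma~\ref{lem:face}), and the witnessing edge may be drawn entirely outside the face, in which case no chord can be inserted and the long face survives without being monochromatic. Indeed the paper's Proposition~\ref{seven} exhibits exactly such faces: the $6$-faces of types $\mathcal{D}_6$ and $\mathcal{E}_6$ have three true vertices of one colour and two of the other, so your dichotomy (short or monochromatic) fails, and in fact a monochromatic face with at least three true vertices is impossible by Lemma~\ref{lem:consecutivevertices}. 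The correct mechanism for bounding face sizes is the one the paper uses: all opposite-colour pairs on $\partial(F)$ are adjacent, so $G[T(F)]$ is an outer $1$-plane complete bipartite graph, and since $K_{3,3}$ is not outer $1$-planar (Lemma~\ref{OB1P}) the face has at most five true vertices; this, not a chord-insertion argument, is what yields the nine face types. Without that classification your discharging has no sound basis, and the rules themselves are never specified; in particular the hardest configurations --- ties nested inside the $3$-patches of other ties, which is precisely the structure of the extremal graphs $H_k$ and the reason the paper resorts to surgery plus induction --- are not addressed by your sketch.
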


\begin{theorem}\label{3-connected}
Let $G$ be an $n$-vertex MBICP-graph. 
If $\kappa(G)\ge 3$, then $e(G)\geq 2n-3$.
Moreover, this bound is tight for all integers $n=2k$ with $k\geq3$.
\end{theorem}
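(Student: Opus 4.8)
The plan is to pass to the planarization and reduce the whole statement to counting triangular faces. Fix a drawing $D$ of the MBICP-graph $G$ with $c$ crossings and let $A,B$ be its two parts. Because $G$ is IC-plane, the two edges of each crossing meet four distinct vertices, no vertex lies on two crossing edges, and no edge is crossed twice; hence the $c$ crossings use $4c$ distinct vertices and no two dummy vertices are adjacent after planarizing. Replacing each crossing by a degree-$4$ dummy vertex gives a plane graph $H$ with $|V(H)|=n+c$ and $|E(H)|=e(G)+2c$. Since $\kappa(G)\ge 3$, $H$ is $2$-connected, so every face is bounded by a simple cycle and Euler's formula gives $|F(H)|=e(G)+c-n+2$. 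Writing $f_3$ for the number of triangular faces and combining this with $\sum_f \ell(f)=2|E(H)|$, I will show that every face of $H$ has length $3$ or $4$; a two-line count then yields
\[
e(G)=2n-4+\tfrac{1}{2}f_3,
\]
so the theorem reduces to proving $f_3\ge 2$.

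The first key step is a \emph{small-face lemma}: every face of $H$ has length at most $4$. The mechanism is maximality. If $u,v$ lie in opposite parts on the boundary of a common face $f$ and $uv\notin E$, then $uv$ may be drawn inside $f$ with no new crossing, keeping $G$ bipartite, simple and IC-plane, contradicting maximality; hence any two opposite-part boundary vertices of $f$ are already adjacent. If instead such an edge $uv$ exists but is drawn \emph{outside} $f$, then, choosing $u,v$ non-consecutive on both boundary arcs of a face of length $\ge 5$, the pair $\{u,v\}$ becomes a $2$-separator, contradicting $\kappa(G)\ge 3$. This forces every face to have length at most $4$. Since $G$ is bipartite, any face bounded only by real vertices is an even cycle, hence a quadrilateral; in particular every triangular face passes through a dummy vertex.

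Next I would analyze one crossing, say of $ab$ and $cd$ with $a\in A$. The four neighbours of its dummy vertex $x$ appear in the rotation $a,c,b,d$, and exactly two of the four wedges (the two ``opposite'' ones) join vertices in different parts; the other two join same-part vertices and cannot be closed by an edge. The real boundary path of a closable wedge runs between two opposite-part vertices, so it has odd length and the wedge face has odd length; since all faces have length at most $4$, such a face must be a triangle (an odd face of length $4$ being parity-impossible), while each same-part wedge face is a quadrilateral. Hence each crossing contributes exactly two triangles, giving $f_3=2c$ and $e(G)=2n-4+c$. As a consistency check, $c\le n/4$ then recovers the upper bound $e(G)\le\tfrac94 n-4$ of Theorem~\ref{BIC}. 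It remains to rule out $c=0$: a crossing-free MBICP-graph would be a $3$-connected bipartite plane quadrangulation, but such a graph is never maximal, since a crossing edge can always be inserted across a suitable edge between opposite-part vertices of its two incident quadrilateral faces. Therefore $c\ge 1$, so $f_3=2c\ge 2$ and $e(G)\ge 2n-3$.

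The main obstacle will be the topological heart of the two lemmas: converting each ``blocked'' maximality configuration (an addable edge that is already present, drawn elsewhere) into a genuine $2$-separator, with the odd faces passing through a crossing requiring the most delicate handling, and showing that every $3$-connected bipartite quadrangulation admits an extra crossing edge; both rely on the fixed embedding and the Jordan curve theorem. Finally, for tightness I will exhibit, for every even $n=2k$ with $k\ge 3$, an explicit $3$-connected MBICP-graph with a single crossing and $e=2n-3$, the case $n=6$ being $K_{3,3}$. The construction embeds one crossing gadget (contributing the two triangles) inside an otherwise quadrangulated $3$-connected bipartite graph and enlarges it two vertices at a time by splitting a quadrilateral face; maximality is then verified by checking that every face is a quadrilateral or one of the two crossing-triangles, so that no further edge can be added.
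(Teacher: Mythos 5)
Your overall route is the same as the paper's: show that the underlying planar structure is a quadrangulation (equivalently, via the planarization, that all faces are quadrilaterals except for two triangles per crossing, so $e(G)=2n-4+cr$), then show $cr\geq 1$ by proving that a crossing-free maximal example cannot be $3$-connected, and finish with an explicit one-crossing extremal family. Your Euler/face count, the ``two triangles per crossing'' argument, the exclusion of $cr=0$, and the tightness construction all line up with the paper's Proposition \ref{3-connected property}(iv), Corollary \ref{3-con cr=0} and Lemma \ref{3-connectedtheorem}, and are sound as sketched.

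The genuine gap is in your small-face lemma. Your only mechanism for killing a face of length $\geq 5$ is: find two opposite-part vertices $u,v$ on its boundary, non-consecutive on both arcs; by maximality $uv\in E$, drawn elsewhere; conclude $\{u,v\}$ is a $2$-separator. This fails exactly for the length-$6$ faces whose boundary carries one crossing with both of its boundary end-vertices of the \emph{same} colour (the paper's Figure \ref{facein2-con}(8)--(9)). Concretely, take such a face with boundary $a\,\alpha\,b\,d\,x\,c\,a$, where $a,b,x$ are white, $c,d$ are black, and $\alpha$ is the crossing of the edges $ad$ and $cb$. Every opposite-colour pair on this boundary except $(a,d)$ and $(b,c)$ is consecutive, and the edges $ad$ and $cb$ are precisely the two crossing edges: each lies partly on the face boundary and is crossed by the other, so the Jordan curve formed by such an edge together with an arc through the face is crossed by the other crossing edge and does \emph{not} separate. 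The $2$-separator that actually excludes this face is the \emph{same-coloured} pair $\{c,d\}$, and the separating closed curve is built from the two crossed segments $c\alpha$ and $\alpha d$ (uncrossable because their edges are already crossed once) plus an arc through the face --- not from any edge of $G$ between the pair. This is a different idea from ``an addable edge already present elsewhere,'' and without it your identity $e=2n-4+\tfrac12 f_3$ is not established: a surviving $6$-face changes the count to $e=2n-4+c-f_6$ and the bound no longer follows. (Your flag about ``odd faces through a crossing'' covers the length-$5$ case, where the relevant pair is the crossing-consecutive opposite-colour pair joined by a clean tie edge, but it does not cover these even faces.) The paper reaches the same conclusion via Proposition \ref{3-connected property}: first every tie is clean because a vertex inside a $3$-patch is cut off by the two ends of the patch's clean edge, and then the residual $6$-faces are excluded by exactly the same-colour $2$-cut described above; you need to import that second step.
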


Furthermore, we construct a family of $4$-connected MBICP-graphs $G$ with size $\frac{13}{6}n(G)-\frac{14}{3}$,
and propose a conjecture on the lower bounds for the size of 4-connected  MBICP-graphs in Section~\ref{further study}.

 \subsection{Terminology and notations}\label{Terminology}

\noindent In this subsection, we give some  terminology and notations. We use standard terminology and notation following in \cite{JB2008}.

Let $G$ be a graph and $A$ be a subset of $V(G)$.
The {\it induced subgraph} $G[A]$ of $G$ denotes the subgraph with vertex set $A$ and edge set $\{uv\in E(G):u,v\in A\}$.
Let  $G- A$ denote the graph $G[V(G)\backslash A]$.
If $A=\{u\}$, we write $G-u$ instead of $G-\{u\}$. 
If $e$=$uv$ is an edge of $G$, then we say $u$ and $v$ are {\it adjacent}, and $e$ and $u$ are {\it incident}.  We denote by $G\cong H$ if two graphs $G$ and $H$ are isomorphic.

For a given drawing $D$, let $cr(D)$ denote the number of crossings in $D$.  
The {\it crossing number} $cr(G)$ is the minimum value of $cr(D)$ over all drawings $D$ of $G$.

Let $G$ be $1$-plane graph.
An edge is a {\it crossing edge} if it is crossed by another edge in $G$; otherwise, it is a {\it clean edge}.
A cycle $C$ of $G$ is {\it clean} if each edge on $C$ is clean. 
Let $G_{p}$ denote the plane graph obtained from $G$ by arbitrarily removing one edge from each pair of crossing edges. 
The {\it associated plane graph} $G^{\times}$ is the plane graph derived from $G$ by turning all crossings of $G$ into new vertices of degree 4; 
these new vertices are called {\it false} vertices, other vertices of $G^{\times}$ are {\it true} ones. 
The $1$-plane graph $G$ partitions the plane $\mathbb{R}^{2}$ into regions called {\it faces} of $G$,
each bounded by a closed walk consists of vertices, false vertices, edges and edge segments.
Let $\partial(F)$ denote the boundary of a face $F$ and $T(F)$ denote the set of true vertices on $\partial(F)$.
In $G$, a face is {\it false} if its boundary contains at least one crossing; and is {\it true} otherwise.

The {\it size} $|F|$ of a face $F$ is defined as the number of edges and edge segments on its boundary, where each repeated edges or edge segments  is counted twice. 
A face of size $k$ is said to be a {\it k-face},
A graph is a {\it quadrangulation} if it is a simple plane graph with each face of size $4$.
Two faces in a plane graph are {\it adjacent} if their boundaries share at least one common edge.
For any vertex $x$ in $G$, let $N_{G}(x)=\{y\in V(G): xy\in E(G)\}$.
If $\alpha$ is the crossing of edges $e$=$ac$ and $e'$=$bd$, let $N_{G}(\alpha)=\{a,b,c,d\}$.
A crossing $\alpha$ and a vertex $x$ are called {\it incident} if $x \in N_{G}(\alpha)$.
A non-self-crossed cycle $C$ of $G^{\times}$ partitions the plane into two open regions, 
where the bounded region is called the {\it interior} of $C$, and the unbounded region is called the {\it exterior} of $C$.

For any integers $a$ and $b$, let $[a,b]$ denote the set of integers $i$ with $a\leq i\leq b$.

The rest of the paper is organized as follows.
Section \ref{Preliminaries} introduces  some elementary lemmas.
Section \ref{properties} gives the structural properties of 2-connected MBICP-graphs.
In Section \ref{crossing number0}, we characterize MBICP-graphs without crossings.
In Section \ref{proofmaintheorem}, we provide  proofs of Theorems \ref{2-connected} and \ref{3-connected}.

 \section{Preliminaries}\label{Preliminaries}

\noindent The following are two known lemmas.

\begin{lemma}[\cite{XZ2013}]\label{lem:zhang}
Let $G$ be an IC-plane graph and $D$ be the corresponding IC-planar drawing of $G$. Then $cr(D)\leq \frac{n(G)}{4}$.
\end{lemma}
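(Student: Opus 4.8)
The plan is to prove the bound by a direct vertex-counting argument that exploits the defining property of IC-planar drawings: distinct crossings involve pairwise disjoint sets of end-vertices. First I would fix the IC-planar drawing $D$ and, for each crossing $\alpha$ formed by edges $e=ac$ and $e'=bd$, consider the associated end-vertex set $N_G(\alpha)=\{a,b,c,d\}$ introduced in the terminology. The first point to record is that each such set has exactly four elements: since $D$ is a good drawing, no two edges incident with a common vertex cross, so $\{a,c\}\cap\{b,d\}=\emptyset$ and the two crossing edges contribute four distinct endpoints.

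Next I would invoke the independent-crossing hypothesis. By the definition of IC-planarity, any two distinct pairs of crossing edges share no common end-vertex, which in the present notation says precisely that $N_G(\alpha)\cap N_G(\beta)=\emptyset$ for any two distinct crossings $\alpha,\beta$ of $D$. Combined with the previous paragraph, this shows that the family $\{\,N_G(\alpha): \alpha \text{ is a crossing of } D\,\}$ consists of pairwise disjoint four-element subsets of $V(G)$. Consequently the number of vertices of $G$ that lie in some $N_G(\alpha)$ is exactly $4\cdot cr(D)$, and since all of these are genuine (distinct) vertices of $G$, we obtain $4\cdot cr(D)\le n(G)$, i.e. $cr(D)\le \frac{n(G)}{4}$, as required.

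I do not expect a serious obstacle here, as the argument reduces to a single pigeonhole-style count. The only steps requiring any care are the two structural claims underlying the disjointness: that each crossing contributes four \emph{distinct} end-vertices (which follows from the goodness of $D$), and that distinct crossings contribute \emph{disjoint} end-vertex sets (which is exactly the independence condition built into the definition of an IC-planar drawing). Once these are stated, the inequality is immediate.
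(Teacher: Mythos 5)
Your proof is correct: the pairwise-disjointness of the four-element sets $N_G(\alpha)$, which follows from the goodness of the drawing and the independent-crossings condition, immediately gives $4\,cr(D)\le n(G)$. The paper cites this lemma from Zhang and Liu without reproducing a proof, and your counting argument is exactly the standard one underlying that reference, so there is nothing to add.
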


A graph is {\it outer $1$-planar} if it admits a $1$-planar drawing in the plane 
such that all vertices can be drawn on the boundary of a face.
Interestingly, Auer et al. showed that every outer $1$-planar graph is planar.

\begin{lemma}[Auer et al. \cite{CA2016}]\label{O1P}
Every outer $1$-planar graph is planar.
\end{lemma}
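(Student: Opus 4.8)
The plan is to prove the statement constructively: starting from an outer $1$-planar drawing $D$ of $G$, in which by definition every vertex lies on the boundary of one common face, I would produce a crossing-free drawing of $G$ by eliminating the crossings of $D$ one at a time. After deforming $D$ so that the common face is the unbounded region and all vertices sit on a circle, each edge becomes a topological chord, and two edges cross precisely when their four endpoints interleave along the circle. I would then induct on $cr(D)$. The base case $cr(D)=0$ is immediate, since $D$ is already a plane drawing and hence $G$ is planar.

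For the inductive step, fix a crossing $\chi$ of two edges $ac$ and $bd$. Because $D$ is $1$-planar, each of these edges is crossed at most once, so $ac$ is crossed only by $bd$ and vice versa. Writing the cyclic order of the four endpoints on the circle as $a,b,c,d$, the chord $ac$ then splits the disk into a \emph{$b$-side} and a \emph{$d$-side} whose only interaction is the edge $bd$ passing through $\chi$, together with the shared endpoints $a$ and $c$; the symmetric statement holds for $bd$. The key structural consequence is that no edge other than $bd$ crosses $ac$, so apart from $bd$ every edge of $D$ is confined to one of these two sides.

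I would use this isolation to uncross $\chi$ locally: reroute one of the two edges, say $bd$, so that rather than piercing $ac$ at $\chi$ it runs in a thin corridor alongside $ac$ and rounds one endpoint (say $a$) from the $b$-side to the $d$-side. Since no vertex is displaced, the resulting drawing is still outer $1$-planar, and it has one fewer crossing, at which point induction finishes the argument. The main obstacle -- and the place where the outer hypothesis is indispensable -- is verifying that this corridor can be made crossing-free. Here I would exploit that every edge is a chord between boundary vertices: the edges incident to $a$ that reach into the $b$-side (resp.\ the $d$-side) are nested and leave a free channel hugging $ac$ all the way up to $a$, so the rerouted copy of $bd$ can be threaded through this channel without meeting any of them. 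The remaining technical content is the bookkeeping of which endpoint to round and a short case analysis confirming that no edge of either side is crossed by the new arc; this local verification is the crux of the proof.
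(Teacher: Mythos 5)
The paper does not prove this lemma at all: it is imported verbatim from Auer et al.\ \cite{CA2016}, so there is no in-paper argument to compare yours against; I can only assess your proposal on its own terms. Your setup is fine (normalising so that all vertices lie on a circle with every edge inside the disk, observing that in a good drawing two edges cross iff their endpoints interleave, and that by $1$-planarity only $bd$ crosses $ac$, so every other edge lives entirely in the $b$-side or the $d$-side). The gap is exactly at the step you yourself flag as ``the crux'' and defer: the local uncrossing move does not work. Your free channel hugging $ac$ does get the rerouted arc \emph{to} $a$ without incident, but to pass \emph{around} $a$ from the $b$-side of $ac$ to the $d$-side the new arc must cross every edge incident to $a$ other than $ac$ (the edges at $a$ fan out and block every circuit of $a$ except the one through $ac$ itself). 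Moreover, one cannot always escape by choosing a better endpoint. Take vertices in cyclic order $a,x_1,b,x_2,c,x_3,d,x_4$ with the crossing pair $ac,bd$ and the eight short chords $ax_1,ax_4,bx_1,bx_2,cx_2,cx_3,dx_3,dx_4$: this is an outer $1$-planar drawing with one crossing in which each of $a,b,c,d$ has an extra neighbour on \emph{both} sides of the edge it would be rounded against, so every one of the four possible roundings creates two new crossings. Hence the inductive step, as described, is false rather than merely unchecked.

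A correct argument abandons the one-crossing-at-a-time induction in favour of a single global redrawing. With all edges inside the disk, note that if $e_1$ and $e_2$ are crossed edges belonging to \emph{different} crossing pairs, their endpoints cannot interleave, since interleaving would force them to cross and one of them would then be crossed twice, violating $1$-planarity. Therefore, choosing one edge from each crossing pair, the chosen edges are pairwise non-interleaving, and they can all be rerouted simultaneously through the exterior of the disk (nested according to their endpoint intervals) without crossing each other; they cross nothing left inside the disk either. The resulting drawing of $G$ is crossing-free, so $G$ is planar. If you want to keep your write-up, replace the local rounding step with this global rerouting; the interleaving observations you already made are precisely what make it work.
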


Let $K_{p,q}$ denote the complete bipartite graph with partite sets of sizes $p$ and $q$.
Since $K_{3,3}$ is not planar, by Lemma \ref{O1P}, $K_{3,3}$ is not outer $1$-planar.
Hence, we immediately obtain the following lemma.
 
\begin{lemma}\label{OB1P}
Let $K_{s,t}$ be a complete bipartite graph with partite sets of sizes $s$ and $t$.
If $s\ge 3$ and $t\ge 3$, then $K_{s,t}$ is not outer $1$-planar.
\end{lemma}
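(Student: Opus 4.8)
The plan is to reduce the statement for general $s,t$ to the single graph $K_{3,3}$ and then invoke Lemma~\ref{O1P}. First I would record the well-known fact that $K_{3,3}$ is not planar, it being one of the two Kuratowski graphs. Combining this with Lemma~\ref{O1P}, which asserts that every outer $1$-planar graph is planar, immediately gives that $K_{3,3}$ itself is not outer $1$-planar: if it admitted an outer $1$-planar drawing, it would be planar, contradicting its non-planarity. This is exactly the observation flagged in the paragraph preceding the statement.

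Next I would observe that outer $1$-planarity is a hereditary (subgraph-monotone) property. Concretely, if $H$ is a subgraph of a graph admitting an outer $1$-planar drawing $D$ in which all vertices lie on the boundary of a common face, then deleting from $D$ the points and arcs corresponding to the vertices and edges not in $H$ produces an outer $1$-planar drawing of $H$. Indeed, no new crossings are introduced by deletion, so each surviving edge is still crossed at most once, and every vertex of $H$ still lies on the boundary of the outer face. Hence every subgraph of an outer $1$-planar graph is again outer $1$-planar.

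Finally, for $s\ge 3$ and $t\ge 3$ the graph $K_{s,t}$ contains $K_{3,3}$ as a subgraph, obtained by selecting any three vertices from each partite set. If $K_{s,t}$ were outer $1$-planar, the hereditary property would force $K_{3,3}$ to be outer $1$-planar as well, contradicting the first step. Therefore $K_{s,t}$ is not outer $1$-planar, as claimed. The argument presents essentially no obstacle; the only point warranting a moment's care is the subgraph-monotonicity of outer $1$-planarity, but this follows directly from restricting a given drawing, since neither the single-crossing constraint on each edge nor the outer-face constraint on the vertices can be violated by deleting vertices and edges.
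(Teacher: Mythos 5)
Your proposal matches the paper's argument: the paper derives the lemma from the non-planarity of $K_{3,3}$ together with Lemma~\ref{O1P}, implicitly using the same subgraph-monotonicity of outer $1$-planarity that you spell out. Your version is correct and simply makes the hereditary step explicit.
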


\begin{lemma}
\label{2-connected property}
Let $G$ be a  $2$-connected $1$-plane graph.
If $F$ is a face of $G$, then any true vertex   on the boundary walk of $F$  occurs exactly once.
\end{lemma}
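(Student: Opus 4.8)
The plan is to argue by contradiction and show that a true vertex occurring more than once on a facial walk must be a cut vertex of $G$, which is impossible once $\kappa(G)\ge 2$. So suppose some true vertex $v$ occurs at least twice on the boundary walk $W$ of the face $F$. The first thing I would record is a standard fact about drawings in the plane: each angular sector (corner) between two consecutive edges in the rotation at a vertex belongs to exactly one face and is traversed exactly once by that face's boundary walk. Consequently the two occurrences of $v$ on $W$ correspond to two \emph{distinct} corners $\sigma_{1}\neq\sigma_{2}$ of $F$ at $v$. I would also note at the outset that the drawing of $G$ and that of its associated plane graph $G^{\times}$ occupy the same point set of the plane, the crossings being interior points of edges; thus the corners at the true vertex $v$ are genuinely the sectors between consecutive edges of $G$ incident with $v$, and removing $v$ is a legitimate operation on $G$.

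Next I would build a separating Jordan curve. Since $F$ is an open connected region, it is path-connected, so I can pick points $p_{1}\in\sigma_{1}$ and $p_{2}\in\sigma_{2}$ close to $v$ and join them by a simple arc $\beta\subseteq F$. Appending two short segments from $v$ to $p_{1}$ and from $p_{2}$ to $v$, each lying inside the respective corner, produces (with the usual care so that the pieces meet only as intended) a simple closed curve $\gamma$ passing through $v$ whose intersection with the drawing of $G$ is exactly $\{v\}$: the arc $\beta$ lies in the interior of $F$ and the two segments lie inside the corners, so neither meets any edge or vertex except at $v$. By the Jordan curve theorem $\gamma$ splits the plane (equivalently the sphere) into two regions $R_{1}$ and $R_{2}$. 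Near $v$ the two segments of $\gamma$ cut the rotation at $v$ into two arcs of edge-stubs, and these arcs are both nonempty precisely because $\sigma_{1}\neq\sigma_{2}$; hence there is an edge $e_{1}$ incident with $v$ entering $R_{1}$ and an edge $e_{2}$ entering $R_{2}$. Following $e_{1}$ away from $v$, it belongs to the drawing of $G$ and so can meet $\gamma$ only at $v$; it therefore stays in $R_{1}$ and its other endpoint $u_{1}$, a true vertex distinct from $v$, lies in $R_{1}$. Symmetrically $e_{2}$ yields a true vertex $u_{2}\in R_{2}$.

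Finally I would conclude the contradiction: any path between $u_{1}$ and $u_{2}$ in $G-v$ is a curve in the drawing of $G$ that avoids $v$, hence avoids $\gamma$ entirely, and so cannot pass from $R_{1}$ to $R_{2}$. Thus $u_{1}$ and $u_{2}$ lie in different components of $G-v$, making $v$ a cut vertex and contradicting $\kappa(G)\ge 2$. The routine part is the Jordan curve theorem and the observation that edges cannot cross $\gamma$; the step that needs the most care, and which I expect to be the main obstacle, is verifying that $\gamma$ really is a simple closed curve meeting the drawing only at $v$ and that each of its two sides contains a true vertex other than $v$ — i.e. pinning down the topology of the construction rather than any computation.
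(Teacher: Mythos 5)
Your proof is correct and follows essentially the same route as the paper's: both construct a simple closed curve through $v$ lying inside $F$ (meeting the drawing only at $v$), and invoke the Jordan curve theorem to exhibit $v$ as a cut-vertex, contradicting $2$-connectivity. Your version merely fills in the topological details (distinct corners, edge-stubs on both sides of the curve) that the paper leaves implicit.
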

\begin{proof}
Suppose that $v\in T(F)$ appears more than once on the  boundary walk of $F$.
A simple Jordan closed curve $\mathcal{O}$ connecting $v$ to $v$ can be drawn within $F$ such that $\mathcal{O}\cap G=\{v\}$ 
and both regions partitioned  by $\mathcal{O}$ contain the vertices of $G$.
This implies that $v$ is a cut-vertex of $G$, contradicting the assumption that $G$ is $2$-connected. 
\end{proof} 

\begin{lemma}\label{lem:face}
Let $G$ be a MBICP-graph and $F$ be a face of $G$.  Then any two vertices with different colors on $\partial(F)$ are adjacent in $G$.
\end{lemma}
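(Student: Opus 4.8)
The plan is to argue by contradiction, exploiting the maximality of $G$. Suppose $u$ and $v$ are two vertices of different colors lying on $\partial(F)$, and suppose that $uv\notin E(G)$. Since false vertices carry no color, $u$ and $v$ are true vertices, i.e.\ $u,v\in T(F)$. The goal is to show that the edge $uv$ can be inserted into $G$ so that the resulting graph is still simple, bipartite, and admits an IC-plane drawing; this would contradict the assumption that $G$ is a MBICP-graph, and therefore force $uv\in E(G)$.

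First I would draw the new edge inside the face $F$. Because $F$ is a single open region of the plane whose interior meets no vertex, no edge, and no edge segment of $G$, while both $u$ and $v$ lie on its boundary, one can route a simple arc $\gamma$ from $u$ to $v$ whose interior stays entirely within $F$. Such an arc crosses no edge of $G$. If $u$ or $v$ occurs more than once on the boundary walk of $F$, I would fix one occurrence of each and send $\gamma$ out of the corresponding corner; this is all that is required. Let $G'$ denote the drawing obtained from $G$ by adding $uv$, drawn as $\gamma$.

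The verification then reduces to three checks. Simplicity holds because $u$ and $v$ receive different colors, hence $u\neq v$, and $uv\notin E(G)$ by hypothesis. Bipartiteness is immediate, since $u$ and $v$ lie in distinct partite sets and so the edge $uv$ respects the proper $2$-coloring. Finally, because $\gamma$ crosses nothing, the edge $uv$ is clean in $G'$: the drawing remains good, every edge is still crossed at most once, and, most importantly, the set of crossing edges of $G'$ is identical to that of $G$. Hence any two pairs of crossing edges still share no common end-vertex, so $G'$ is IC-plane. Thus $G'$ is a simple bipartite IC-plane graph containing the extra edge $uv$, contradicting the maximality of $G$.

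I expect the only delicate points to be the geometric claim that a non-crossing arc between $u$ and $v$ can always be routed within $F$, and the structural observation that adding a clean edge cannot violate IC-planarity. The latter is really the crux: IC-planarity restricts only those edges that actually participate in a crossing, so an uncrossed edge never interacts with the independence condition and can always be added freely. Once these two observations are made precise, the contradiction is immediate and the lemma follows.
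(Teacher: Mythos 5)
Your proposal is correct and follows exactly the paper's argument: the paper's proof is a two-line version of the same idea, namely that a clean edge $uv$ can be drawn inside $F$ without creating any crossing, contradicting maximality. Your additional verifications (simplicity, bipartiteness, and the observation that an uncrossed edge cannot violate IC-planarity) are just a careful expansion of what the paper leaves implicit.
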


\begin{proof}Suppose that $u$ and $v$ are two vertices on $\partial(F)$ with different colors and non-adjacent in $G$.
Then, a clean edge $uv$ can be added within $F$,
contradicting the maximality of $G$.
\end{proof}

We note that a particular 1-planar drawing of $K_{2,2}$, that is, a 4-cycle, will play an important role in what follows.
A {\it tie} is a $1$-plane graph that is isomorphic to the complete graph $K_{2,2}$ with two edges crossing each other, as shown in Figure \ref{tie0}. 
Of course, a tie divides the plane into three regions, and any one of these regions can be designated to project onto the outer infinite region.

\begin{figure}[h!]
  \centering
\begin{tikzpicture}[scale=0.75]
	\begin{pgfonlayer}{nodelayer}
		\node [style=blacknode] (0) at (-2, 2.75) {};
		\node [style=whitenode] (1) at (1.5, 2.75) {};
		\node [style=whitenode] (2) at (1.5, -0.75) {};
		\node [style=blacknode] (3) at (-2, -0.75) {};
	\end{pgfonlayer}
	\begin{pgfonlayer}{edgelayer}
		\draw [style={blackedge_thick}] (1) to (3);
		\draw [style={blackedge_thick}] (3) to (2);
		\draw [style={blackedge_thick}] (0) to (2);
		\draw [style={blackedge_thick}] (0) to (1);
	\end{pgfonlayer}
\end{tikzpicture}
  \caption{A tie}
  \label{tie0}
\end{figure}

For convenience, we denote a tie by $T(\alpha)$, where $\alpha$ is the crossing of the tie, as shown in Figure \ref{tie}(1).
Let $R_1, R_2$, and $R_3$ denote the three regions  partitioned by $T(\alpha)$, 
where $R_1$ and $R_2$ are bounded by one edge and two edge segments, while $R_3$ is bounded by two edges and four edge segments.
For brevity, we call $R_{1}$ and $R_{2}$ the {\it $3$-patches} of $T(\alpha)$ and $R_{3}$ is the {\it $6$-patch} of $T(\alpha)$.

\begin{figure}[h!]
  \centering
\begin{tikzpicture}[scale=0.65, bezier bounding box]

	\begin{pgfonlayer}{nodelayer}
		\node [style=blacknode] (0) at (-5.75, 3.5) {};
		\node [style=whitenode] (1) at (-2.25, 3.5) {};
		\node [style=whitenode] (2) at (-2.25, 0) {};
		\node [style=blacknode] (3) at (-5.75, 0) {};
		\node [style=blacknode] (4) at (2.55, 3.375) {};
		\node [style=whitenode] (5) at (4.975, 3.375) {};
		\node [style=whitenode] (6) at (4.975, 0.875) {};
		\node [style=blacknode] (7) at (2.55, 0.875) {};
		\node [style=none] (8) at (3.75, 1.375) {\color{blue}$R_{2}$};
		\node [style=none] (9) at (-3.85, -2) {$(1)$};
		\node [style=none] (10) at (4, -2) {$(2)$};
		\node [style=none] (12) at (3.825, 3.125) {\color{blue}$R_{1}$};
		\node [style=none] (13) at (5.075, 1.875) {\color{blue}$R_{3}$};
		\node [style=none] (14) at (-4, 2.75) {\color{blue}$R_{1}$};
		\node [style=none] (15) at (-4, 0.75) {\color{blue}$R_{2}$};
		\node [style=none] (16) at (-2.25, 1.75) {\color{blue}$R_{3}$};
		\node [style=none] (19) at (-4.5, 1.75) {$\alpha$};
		\node [style=none] (20) at (3.225, 2.125) {$\alpha$};
	\end{pgfonlayer}
	\begin{pgfonlayer}{edgelayer}
		\draw [style={blackedge_thick}] (1) to (3);
		\draw [style={blackedge_thick}] (3) to (2);
		\draw [style={blackedge_thick}] (0) to (2);
		\draw [style={blackedge_thick}] (5) to (7);
		\draw [style={blackedge_thick}] (7) to (6);
		\draw [style={blackedge_thick}] (4) to (6);
		\draw [style={blackedge_thick}] (0) to (1);
		\draw [style={blackedge_thick}, in=315, out=-135, looseness=6.50] (4) to (5);
	\end{pgfonlayer}
\end{tikzpicture}

  \caption{$(1)$ $R_{3}$ is an unbounded region in $T(\alpha)$; $(2)$ $R_{3}$ is a bounded region in $T(\alpha)$
}
  \label{tie}
\end{figure}
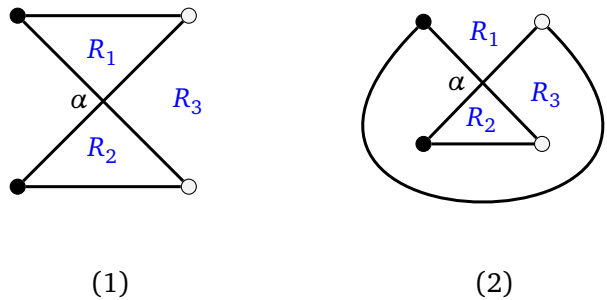

We observe that any pair of crossed edges in a MBICP-graph forms a  tie, as stated in the following lemma.
\begin{lemma}\label{cleantie}
Let $G$ be a MBICP-graph. Let $a b$ and $c d$ be two edges in $G$ that cross at a point $\alpha$, where $\{a,c\}\in X$ and $\{b,d\}\in Y$.
Then $G[\{a,b,c,d\}] \cong T(\alpha)$. Furthermore, edges $ad$ and $cb$ are clean in $G$.
\end{lemma}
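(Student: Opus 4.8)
The plan is to combine the independent-crossing hypothesis with the maximality of $G$, the latter accessed through Lemma~\ref{lem:face}. Since $G$ is bipartite with $a,c\in X$ and $b,d\in Y$, the only edges that can possibly appear among $\{a,b,c,d\}$ are the four edges $ab,ad,cb,cd$ of $K_{2,2}$ (no edge can join two vertices of the same part). The edges $ab$ and $cd$ are present by hypothesis, so it suffices to produce the two cross-edges $ad$ and $cb$ and then to verify that the induced subdrawing has $ab$, $cd$ as its only crossing.

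First I would analyse the local rotation at the false vertex $\alpha$ in $G^{\times}$. Because a good drawing forbids edges sharing a vertex from crossing and forbids a double crossing, the four arms $\alpha a,\alpha b,\alpha c,\alpha d$ occur around $\alpha$ with the two arms of each edge diametrically opposite; hence their cyclic order is $(a,c,b,d)$ up to reflection. Consequently exactly two of the four corners at $\alpha$ are flanked by arms of different colour, namely the corner between $\alpha c$ and $\alpha b$ and the corner between $\alpha d$ and $\alpha a$.

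Next I would locate the faces realizing these mixed corners, and here the IC-condition is decisive. Since $a,b,c,d$ are all incident with the crossing $\alpha$, no further edge incident with any of them can be crossed (otherwise two crossing pairs would share an endpoint), so each arm is a clean half-edge running from $\alpha$ straight to its true endpoint. Thus the face $F$ containing the corner between $\alpha c$ and $\alpha b$ has both $c$ and $b$ on $\partial(F)$, and as they have different colours, Lemma~\ref{lem:face} forces $cb\in E(G)$; the symmetric argument on the other mixed corner yields $ad\in E(G)$. This shows $G[\{a,b,c,d\}]$ is precisely $K_{2,2}$.

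Finally, the ``furthermore'' clause follows from the same IC-observation: were $ad$ (respectively $cb$) crossed by an edge $e'$, the crossing pair $(ad,e')$ would share the endpoint $a$ (respectively $b$) with the pair $(ab,cd)$, contradicting IC-planarity; hence $ad$ and $cb$ are clean. As $ad,cb$ are uncrossed while $ab,cd$ cross only each other, the induced subdrawing on $\{a,b,c,d\}$ carries a single crossing and is therefore a tie, giving $G[\{a,b,c,d\}]\cong T(\alpha)$. I expect the only delicate point to be the justification that each mixed corner genuinely lies on a face whose boundary reaches both relevant true vertices, i.e.\ ruling out that a clean arm is interrupted before arriving at its endpoint; this is exactly the step where the independent-crossing condition, rather than mere $1$-planarity, must be invoked.
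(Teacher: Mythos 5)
Your proposal is correct and follows essentially the same route as the paper: locate the two faces at the mixed corners of $\alpha$, invoke Lemma~\ref{lem:face} (maximality) to obtain $ad,cb\in E(G)$, and derive the cleanness of $ad$ and $cb$ from IC-planarity. One small remark: the "delicate point" you flag at the end---that each arm $a\alpha,b\alpha,c\alpha,d\alpha$ reaches its true endpoint uninterrupted---already follows from $1$-planarity alone (each of $ab,cd$ is crossed only at $\alpha$), which is exactly how the paper justifies it; the independent-crossing condition is needed only for the final cleanness claim.
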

\begin{proof}
By the 1-planarity of $G$, neither $ab$ nor $cd$ is crossed by any other edge in $G$.
Then there exists a face $F$ in $G$ whose boundary $\partial(F)$ contains the edge segments $a\alpha$ and $d\alpha$.
By Lemma \ref{lem:face}, $ad\in E(G)$,
Similarly, $bc\in E(G)$.
Thus $G[\{a,b,c,d\}] \cong T(\alpha)$. Furthermore, if either $ad$ or $cb$ is crossed by another edge in $G$, this would contradict the IC-planarity of $G$, and thus the conclusion holds.
\end{proof}

\section{Properties of 2-connected MBICP-graphs}\label{properties}
\noindent 
For simplicity, throughout this section, let $G$ be a $2$-connected MBICP-graph with the vertex partition $(X, Y)$. We color the vertices in $X$ black and the vertices in $Y$ white. This section, we give some structural properties  focusing  on the faces of $G$.

\subsection{Structures of faces of $G$}

\noindent For a face $F$ of $G$, two vertices $u$ and $v$ are called \emph{consecutive }on the boundary $\partial(F)$ if, along the boundary walk of $\partial(F)$, they appear consecutively in order, with no other vertices (except crossings) between them.

\begin{lemma}\label{lem:consecutivevertices}
Let  $F$ be a face of $G$. If $x,y,z$ are three consecutive vertices on   $\partial(F)$, then $x,y,z$ cannot have the same color.
\end{lemma}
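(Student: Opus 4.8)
The plan is to argue by contradiction, assuming the three consecutive vertices $x,y,z$ on $\partial(F)$ share a common color, say black, so that $x,y,z\in X$. Since $G$ is bipartite, no two black vertices are adjacent; in particular $xy\notin E(G)$ and $yz\notin E(G)$. The first thing I would record is a consequence of consecutiveness: because $x$ and $y$ are consecutive on $\partial(F)$, the portion of the boundary walk joining them contains no true vertex other than $x$ and $y$, so it is either a single clean edge $xy$ or it passes through at least one crossing. The edge $xy$ is excluded by bipartiteness, so a crossing is forced, and hence the edge-segment of $\partial(F)$ that reaches $y$ from the $x$-side is part of a \emph{crossing} edge $e$ incident to $y$. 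Applying the identical reasoning to the consecutive pair $y,z$ yields a crossing edge $f$ incident to $y$ arriving from the $z$-side.

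Next I would show that $e$ and $f$ are distinct and that this already forces a contradiction. By Lemma~\ref{2-connected property}, $y$ occurs exactly once on $\partial(F)$, so the corner of $F$ at $y$ is well defined, and $e,f$ are precisely the two edges bounding that corner. Since $\kappa(G)\ge 2$ forces $\delta(G)\ge 2$, the vertex $y$ has degree at least two, and the two edges bounding a corner at a vertex of degree at least two are distinct; hence $e\ne f$. But then $y$ is incident to two distinct crossing edges, which contradicts the IC-planarity of $G$: the pair of crossing edges realized at the crossing of $e$ and the pair realized at the crossing of $f$ would share the common end-vertex $y$. This contradiction establishes the lemma.

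The step I expect to require the most care is the passage from ``$x,y$ are consecutive and non-adjacent'' to ``the segment reaching $y$ lies on a crossing edge incident to $y$,'' together with the degenerate possibility that the two crossings adjacent to $y$ coincide. For the first point, the tie structure of Lemma~\ref{cleantie} keeps the segments around each crossing under control, and Lemma~\ref{lem:face} guarantees that the only way two boundary vertices can fail to be joined by a clean edge is the one I exploit (differently-colored boundary vertices are always adjacent, so same-colored consecutive ones must be separated by a crossing). For the degenerate case, if the two crossings incident to $y$ were the same point $\alpha$, then $e$ and $f$ would be two distinct edges both passing through $\alpha$ and both incident to $y$, so two edges sharing the vertex $y$ would cross, violating the good-drawing convention; and the only way a single crossing could supply two boundary segments at $y$ from one occurrence of $y$ is if $y$ had degree one, which is ruled out by $\delta(G)\ge 2$. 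Thus the degree bound from $2$-connectivity is what ultimately closes every branch of the argument.
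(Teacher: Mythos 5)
Your proof is correct and follows essentially the same route as the paper's: bipartiteness forces a crossing on the boundary between each pair of consecutive same-colored vertices, so the middle vertex becomes a common end-vertex of two pairs of crossing edges, contradicting IC-planarity. The only difference is that you explicitly rule out the degenerate cases ($e=f$, or the two crossings coinciding) via $\delta(G)\ge 2$ and the good-drawing convention, which the paper's three-line argument leaves implicit.
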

\begin{proof} Suppose that $u$, $v$, $w$ are three consecutive black vertices on $\partial(F)$.
Since $G$ is bipartite, there exists a crossing $\alpha_{1}$ between $u$ and $v$, and a crossing $\alpha_{2}$ between $v$ and $w$ on $\partial(F)$.
Then $N_{G}(\alpha_{1})\cap N_{G}(\alpha_{2})=v$, contradicting the IC-planarity of $G$.
\end{proof}

%

%
%
%

%

\begin{lemma}\label{configuration}
Let $ab$ and $cd$ be two edges in $G$ that cross at a point $\alpha$, where $\{a,c\}\subseteq X$ and $\{b,d\}\subseteq Y$.
Let $F$ be a face of $G$ such that $\alpha \in \partial(F)$.
If $\{a,c\}\subseteq \partial(F)$ and $|F|\ge 5$,
then $\{b,d\}\subseteq \partial(F)$, i.e., $N_{G}(\alpha)\cap \partial(F)=\{a,b,c,d\}$.
\end{lemma}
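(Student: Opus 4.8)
\medskip
\noindent\textbf{Proof proposal.}
The plan is to study the local picture at the crossing $\alpha$ and to show that the size hypothesis forces $F$ to meet all four vertices of the tie. First I would apply Lemma~\ref{cleantie} to obtain $G[\{a,b,c,d\}]\cong T(\alpha)$ with the edges $ad$ and $cb$ clean, and I would record the consequence of IC-planarity that none of $a,b,c,d$ is incident to a crossing other than $\alpha$ (a second crossing at any of them would share an end with $\alpha$). The four edge-segments $\alpha a,\alpha b,\alpha c,\alpha d$ split a neighbourhood of $\alpha$ into four corners, joining the pairs $(a,d),(b,c),(a,c),(b,d)$; here $(a,c)$ is the black--black corner and $(b,d)$ the white--white one.

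Next I would observe that the two $3$-patches of $T(\alpha)$ are genuine faces of $G$ of size $3$: the region bounded by the clean edge $ad$ together with the segments $\alpha a,\alpha d$ cannot contain a vertex, since any edge leaving it would have to cross $ad$ (impossible by IC-planarity, as this crossing would share $a$ or $d$ with $\alpha$) or cross an already-crossed segment of $ab$ or $cd$ (impossible by $1$-planarity), and $2$-connectivity then forbids a trapped component. These two faces occupy the corners $(a,d)$ and $(b,c)$, and neither contains both $a$ and $c$; so the hypothesis $\{a,c\}\subseteq\partial(F)$ (or just $|F|\ge 5>3$) gives that $F$ uses the corner $(a,c)$ or the corner $(b,d)$. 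If $F$ uses $(b,d)$ we are immediately done, so the real work is the case that $F$ uses $(a,c)$ but not $(b,d)$; then $\alpha$ occurs exactly once on the (now simple) boundary walk of $F$.

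In that case the boundary of $F$ is the corner $c,\alpha,a$ followed by a path from $a$ back to $c$ inside the $6$-patch. Since $a$ and $c$ meet no crossing besides $\alpha$, the boundary edge leaving $a$ (respectively entering $c$) is clean, so the first vertex $p$ after $a$ and the last vertex $q$ before $c$ are both white. If $p=q$ then $\partial(F)$ is just $c,\alpha,a,p$, giving $|F|=4$, contradicting $|F|\ge 5$; hence $p\neq q$. By maximality (Lemma~\ref{lem:face}) the edges $aq$ and $cp$ exist, and they are not boundary edges of $F$ (the boundary neighbours of $a$ are $c,p$ and those of $c$ are $a,q$); in the cyclic order $a,p,\dots,q,c$ their endpoints interleave, so, being drawn in the exterior of the disk bounded by $\partial(F)$, the arcs $aq$ and $cp$ must cross. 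I would then examine this forced crossing: if it is a crossing different from $\alpha$ it shares $a$ with $\alpha$ and violates IC-planarity, and if it forces a tie-edge to be crossed twice it violates $1$-planarity; the only escape is $\{aq,cp\}=\{ab,cd\}$, i.e.\ $q=b$ and $p=d$. But that conclusion already places $b$ and $d$ on $\partial(F)$, so together with $a,c$ we obtain $N_G(\alpha)\cap\partial(F)=\{a,b,c,d\}$.

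I expect the forced-crossing step to be the main obstacle: one must verify carefully that $aq$ and $cp$ are genuinely present (via Lemma~\ref{lem:face}), genuinely off the boundary, and truly interleave, so that the Jordan-curve argument yields an unavoidable crossing; the pleasant point is that the sole configuration escaping the IC-/$1$-planarity contradiction is exactly the one that proves the lemma. The supporting details---that $p,q$ are white and distinct, and that the short case collapses to $|F|=4$---are routine once the corner structure at $\alpha$ is set up.
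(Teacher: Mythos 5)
Your main line---taking the boundary neighbours $p$ of $a$ and $q$ of $c$ on $\partial(F)$, using $|F|\ge 5$ to force $p\neq q$, invoking Lemma~\ref{lem:face} to obtain the edges $aq$ and $cp$, and deriving a forced crossing that violates IC-planarity unless $p=d$ and $q=b$---is exactly the paper's proof (your $p,q$ are the paper's $x,y$). However, your preliminary claim that \emph{both} $3$-patches of $T(\alpha)$ are faces of $G$ is false, and the justification ``$2$-connectivity forbids a trapped component'' is where it goes wrong: a component inside the $3$-patch bounded by $ad$, $\alpha a$, $\alpha d$ can attach to the rest of $G$ through the two vertices $a$ and $d$, which merely makes $\{a,d\}$ a $2$-cut; excluding this needs $3$-connectivity (that is precisely Proposition~\ref{3-connected property}(i), and the paper's \emph{bad} ties, e.g.\ $T(\alpha_3)$ in Figure~\ref{tie3}, are exactly ties with a non-face $3$-patch). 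Fortunately the claim is not load-bearing: the conclusion you extract from it---that $F$ occupies the corner $(a,c)$ or $(b,d)$ at $\alpha$---already follows from the hypothesis $\{a,c\}\subseteq\partial(F)$ alone, since a face occupying the corner $(a,d)$ or $(b,c)$ lies inside the corresponding $3$-patch and therefore cannot have both $a$ and $c$ on its boundary. You should delete the false claim and argue the corner selection this way.

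The other point needing repair is the mixed case, say $p=d$ but $q\neq b$. There $cp=cd$, whose segment $c\alpha$ lies \emph{on} $\partial(F)$, so the edge $cp$ is not an arc drawn wholly outside the closed disk bounded by $\partial(F)$ and your interleaving argument does not apply verbatim to the pair $aq,\,cp$. Your phrase ``forces a tie-edge to be crossed twice'' names the right contradiction but skips the argument: one should observe (as the paper does) that the new edge $aq$, drawn outside $F$ with endpoints interleaving the pair $\alpha, d$ on $\partial(F)$, must cross the segment $\alpha d$ of $cd$, giving $cd$ a second crossing and contradicting $1$-planarity; the symmetric case $q=b$, $p\neq d$ is analogous. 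With the false patch claim removed and this case written out, your argument coincides with the paper's.
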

\begin{proof}
Let $ax$ and $cy$ be edges on $\partial(F)$ incident with $a$ and $c$, respectively.
By IC-planarity, both $ax$ and $cy$ are clean.
Since $|F| \geq 5$, it follows that $x \neq y$.
Moreover, as $G$ is simple, we have $d \neq y$ and $b \neq x$.
It suffices to prove $d= x$ and $b= y$. Suppose $d\neq x$ and $b\neq y$.
By the maximality of $G$, $\{ay,cx\}\in E(G)$ and lie in the exterior of $F$.
Consequently, $ay$ and $cx$ cross each other at a point $\alpha'$.
Then $N_{G}(\alpha) \cap N_{G}( \alpha')=\{a,c\}$,
a contradiction to the IC-planarity of $G$.

If $d=x$ and $b\neq y$, then $ay\in E(G)$ and lies in the exterior of $F$.
In this case, $cx$ would be crossed twice, contradicting the $1$-planarity of $G$.
Similarly, the case $d\neq x$ and $b=y$ is also impossible.
Hence $d=x$ and $b=y$. 
Thus $\{b,d\}\subseteq \partial(F)$ and $N_{G}(\alpha)\cap \partial(F)=\{a,b,c,d\}$.
\end{proof}

A graph is {\it outer IC-planar} if it admits an outer $1$-planar drawing in the plane such that two pairs of crossing edges
share no common end-vertex; and such drawing is called \emph{outer IC-plane}.

\begin{lemma}\label{properties of G}
Let  $F$ be a face of $G$. 
Let $\ell_{b}$, $\ell_{w}$, $\ell_{t}$ and $\ell_{f}$ be the numbers of black, white, true and false vertices on $\partial(F)$, respectively.
Then the following statements hold.
\begin{itemize}
  \item[(i)] $G[T(F)]\cong K_{\ell_{b}, \ell_{w}}$ and $G[T(F)]$ is outer $1$-plane; and
  \item[(ii)] if $|\ell_{b}-\ell_{w}|=k$, where $\max \{\ell_{b},\ell_{w}\}\geq2$ and $k\geq1$, 
then $\ell_{f}\geq k$, and moreover, if $\ell_{b}>\ell_{w}$, then there exist at least $k$ crossings $x$ on $\partial(F)$ 
such that two end-vertices of $x$ on $\partial(F)$ are black. Correspondingly, if $\ell_{w}>\ell_{b}$, then there exist at least $k$ crossings $y$ on $\partial(F)$ 
such that two end-vertices of $y$ on $\partial(F)$ are white.
\end{itemize}
\end{lemma}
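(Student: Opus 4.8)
The plan is to handle the two parts separately: part (i) follows quickly from the maximality and bipartiteness lemmas together with a subdrawing argument, while part (ii) rests on a boundary-traversal count that exploits the local tie structure at each crossing.

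For part (i), I would first establish the abstract isomorphism. The set $T(F)$ contains $\ell_{b}$ black and $\ell_{w}$ white true vertices, all on $\partial(F)$. By Lemma \ref{lem:face} every black–white pair on $\partial(F)$ is adjacent, so $G[T(F)]$ contains every edge between its two colour classes; since $G$ is bipartite there are no monochromatic edges, whence $G[T(F)]\cong K_{\ell_{b},\ell_{w}}$. For outer $1$-planarity I would pass to the subdrawing $D_{H}$ of $H:=G[T(F)]$ inherited from the fixed IC-planar drawing $D$ of $G$; a subdrawing of a $1$-planar drawing is $1$-planar. Since $F$ is a face of $G$, its open region contains no edge of $H$, so $F$ lies inside a single face $\widehat{F}$ of $D_{H}$ (deleting edges only merges faces). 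Every vertex of $T(F)$ lies on $\partial(F)\subseteq\partial(\widehat{F})$, so after choosing $\widehat{F}$ as the outer face all vertices of $H$ lie on the outer boundary; thus $H$ is outer $1$-plane.

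For part (ii) the key preliminary step is a local analysis of each crossing on $\partial(F)$. By Lemma \ref{cleantie} a crossing $\alpha$ of $ab$ and $cd$ (with $a,c$ black and $b,d$ white) spans a tie, and the four half-edges at $\alpha$ appear in the cyclic order $a,c,b,d$. The face $F$ occupies exactly one wedge at $\alpha$, bounded by two consecutive half-edges that lead to true vertices; hence $F$ reaches $\alpha$ from a true vertex and leaves it to a true vertex, so no two crossings are consecutive along $\partial(F)$. Reading the colours of the two wedge-endpoints, each crossing on $\partial(F)$ is a \emph{black} crossing (wedge $\{a,c\}$), a \emph{white} crossing (wedge $\{b,d\}$), or a \emph{mixed} crossing, and each crossing has exactly one black wedge and one white wedge. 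I would then form the reduced cyclic sequence of the true vertices $w_{1},\dots,w_{\ell_{t}}$ in their order on $\partial(F)$; by Lemma \ref{2-connected property} each appears once, so this is a genuine cyclic $2$-colouring with $\ell_{b}$ black and $\ell_{w}$ white entries, where between consecutive entries lies either a clean edge or a single crossing. A clean edge and a mixed crossing both join opposite colours, so a monochromatic adjacency can arise only from a monochromatic crossing of the matching colour. Writing $m_{B},m_{W}$ for the numbers of black–black and white–white adjacencies and $D$ for the bichromatic ones, the incidence counts $2m_{B}+D=2\ell_{b}$ and $2m_{W}+D=2\ell_{w}$ give $m_{B}-m_{W}=\ell_{b}-\ell_{w}$. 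Assuming $\ell_{b}>\ell_{w}$ (the case $\ell_{w}>\ell_{b}$ being symmetric) this yields $m_{B}\ge k$; since a crossing owns a unique black wedge, distinct black–black adjacencies correspond to distinct black crossings, producing the required $k$ black crossings and, each being a false vertex, $\ell_{f}\ge m_{B}\ge k$.

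The main obstacle I anticipate is the crossing-local bookkeeping rather than the final arithmetic: I must ensure the wedge-colour classification is well defined and that no single false vertex is counted twice among the monochromatic adjacencies. The fact that each crossing contributes at most one black wedge guards against this overcounting even if a false vertex appeared twice on $\partial(F)$, and the hypothesis $\max\{\ell_{b},\ell_{w}\}\ge2$ excludes the degenerate faces with too few true vertices for the cyclic argument to make sense. As a consistency check, Lemma \ref{lem:consecutivevertices} forbids three same-coloured true vertices in a row, which matches the block structure underlying the identity $m_{B}-m_{W}=\ell_{b}-\ell_{w}$.
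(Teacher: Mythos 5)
Your proposal is correct and follows essentially the same route as the paper: part (i) is the identical combination of Lemma \ref{lem:face} with the observation that the inherited subdrawing places all of $T(F)$ on the boundary of one face, and part (ii) is the same counting idea along the cyclic boundary — your adjacency double count $m_B-m_W=\ell_b-\ell_w$ is just a reformulation of the paper's pigeonhole on the $\ell_b$ segments cut out by the black vertices, each of which must contain a white or a false vertex. Your extra wedge-colour bookkeeping (one black wedge per crossing, so distinct black–black adjacencies give distinct crossings) is a welcome precision that the paper leaves implicit.
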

\begin{proof}

(i). Since $T(F)$ is the set of true vertices on $\partial(F)$, it follows that $\ell_t=\ell_b+\ell_w$. By Lemma \ref{lem:face}, any two vertices of different colors on $\partial(F)$ are adjacent in $G$. Consequently, $G[T(F)] \cong K_{\ell_b, \ell_w}$ holds. Moreover, since $G$ is a MBICP-graph and $F$ is a face of $G$, all edges in $E(G[T(F)])$ except for those in $E(F)$ lie in the exterior of $F$. Thus, $G[T(F)]$ is outer 1-plane.

(ii). Assume $\ell_{b}> \ell_{w}$.
Then $\ell_{b}-\ell_{w}=k$.
The $\ell_{b}$ black vertices partition the boundary of $F$ into $\ell_{b}$ segments $S_{1},S_{2},\dots,S_{\ell_{b}}$.
As $G$ is bipartite, there exists either a white or a false vertex located on each segment $S_{i}$, where $1\leq i\leq \ell_{b}$.
Then $\ell_{w}+\ell_{f}\geq \ell_{b}$. Since $\ell_{b}=\ell_{w}+k$, it follows that $\ell_{f}\geq k$.
Hence, there exist at least $k$ crossings on $\partial(F)$ whose two end-vertices on $\partial(F)$ are black.
\end{proof}

\begin{lemma}\label{lem:facesbound}
Let  $F$ be a face of $G$. If $F$ is true, then $|F|=4$.

\end{lemma}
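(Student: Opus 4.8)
The plan is to show that the boundary of a true face must be a short even cycle, and then to invoke the non-outer-$1$-planarity of $K_{3,3}$ to rule out any size exceeding $4$.

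First I would argue that $\partial(F)$ is a cycle. Since $F$ is a true face, by definition no crossing lies on its boundary, so the boundary walk consists solely of true vertices joined by clean edges. By Lemma \ref{2-connected property} every true vertex on $\partial(F)$ occurs exactly once, so the boundary walk visits each of its vertices once and is therefore a cycle $C$. Because $G$ is bipartite, $C$ has even length $2m$, and its vertices alternate in color along $C$; hence the numbers of black and white vertices satisfy $\ell_b=\ell_w=m$. Simplicity of $G$ forbids a $2$-cycle, so $m\ge 2$ and $|F|=2m\ge 4$.

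It then remains to establish $|F|\le 4$. I would suppose for contradiction that $|F|\ge 6$, i.e.\ $m\ge 3$. Since $F$ is true, $T(F)$ consists of exactly the $2m$ vertices of $C$, with $\ell_b=\ell_w=m$. Applying Lemma \ref{properties of G}(i) yields $G[T(F)]\cong K_{\ell_b,\ell_w}=K_{m,m}$ together with the fact that $G[T(F)]$ is outer $1$-plane, and hence outer $1$-planar. But with $m\ge 3$ this contradicts Lemma \ref{OB1P}, which asserts that $K_{s,t}$ is not outer $1$-planar whenever $s,t\ge 3$. This contradiction forces $m\le 2$, so $|F|\le 4$, and combined with $|F|\ge 4$ we conclude $|F|=4$.

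The crux of the argument is the reduction to outer $1$-planarity supplied by Lemma \ref{properties of G}(i): once one observes that all true vertices on $\partial(F)$ induce a complete bipartite graph drawn with every vertex accessible from the single face $F$, the forbidden subgraph $K_{3,3}$ does the rest. I expect the only delicate point to be the justification that $\partial(F)$ is a genuine cycle rather than a more complicated closed walk, which is precisely where $2$-connectivity (via Lemma \ref{2-connected property}) enters; with that in hand, the remaining steps follow immediately from the preceding lemmas.
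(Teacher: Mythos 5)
Your proof is correct and follows essentially the same route as the paper's: establish via Lemma \ref{2-connected property} and bipartiteness that $\partial(F)$ is an even simple cycle with $\ell_b=\ell_w$, rule out $|F|=2$ by simplicity, and exclude $|F|\ge 6$ by combining Lemma \ref{properties of G}(i) with the non-outer-$1$-planarity of $K_{3,3}$ from Lemma \ref{OB1P}. No substantive differences to report.
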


\begin{proof} Since $G$ is bipartite, by Lemma \ref{2-connected property}, it follows that $F$ is bounded a simple cycle, and so $|F|$ is even and $\ell_{b}=\ell_{w}$. 
By the simplicity of $G$ and $\kappa(G)\ge 2$, we have $|F|\neq2$. If
$|F|\ge 6$, then $\ell_{b}=\ell_{w}=|F|/2\ge 3$. 
By Lemma \ref{properties of G}(i),  $G[T(F)]\cong K_{|F|/2,|F|/2}$. 
By Lemma \ref{OB1P}, $G[T(F)]$ is not outer $1$-planar,
a contradiction to Lemma \ref{properties of G}(i).
Thus $|F|=4$.
\end{proof}

Furthermore, for a face $F$ whose boundary may have  crossings (false vertices), we have the following lemma.

\begin{lemma}\label{true-num}
Let $\ell_{t}$ and $\ell_{f}$ be the numbers of true and false vertices on the boundary of a face $F$ in $G$, respectively.
Then we have
\begin{itemize}
  \item[(i)] $\ell_{f}\leq \frac{1}{2} \ell_{t}$; and
  \item[(ii)]  $2\le \ell_{t} \leq 5$ and $\ell_{f}\leq2$.
\end{itemize}

\end{lemma}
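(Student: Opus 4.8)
I would prove the two parts separately, obtaining (ii) largely as a consequence of (i) together with the structural lemmas already in hand.

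For part (i), I would analyse the boundary walk of $F$ at each crossing. The key observation is that every edge segment on $\partial(F)$ joins a true vertex to a false vertex: since each edge of an IC-plane graph is crossed at most once, a crossed edge splits into exactly two segments, each with one true endpoint and the crossing as its other end. Consequently, at any crossing $\alpha$ lying on $\partial(F)$, the face $F$ occupies at least one corner at $\alpha$ bounded by two segments that are cyclically consecutive around $\alpha$; because the four segments at $\alpha$ alternate between the two crossing edges, these two segments belong to distinct edges, so their true endpoints are two \emph{distinct} vertices of $N_G(\alpha)$ appearing on $\partial(F)$. Thus each crossing contributes at least two true vertices to $\partial(F)$, and by IC-planarity distinct crossings have disjoint neighbourhoods, so these contributions are disjoint. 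Summing over all $\ell_f$ crossings gives $\ell_t\ge 2\ell_f$, which is exactly (i).

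For part (ii), the lower bound $\ell_t\ge 2$ is immediate: if $\ell_f\ge 1$ then $\ell_t\ge 2\ell_f\ge 2$ by (i), while if $\ell_f=0$ then $F$ is a true face and $|F|=4$ by Lemma~\ref{lem:facesbound}, whence $\ell_t=4$. For the upper bounds I first invoke Lemma~\ref{properties of G}(i), which gives $G[T(F)]\cong K_{\ell_b,\ell_w}$ and that this graph is outer $1$-plane; since $G[T(F)]$ inherits its drawing from the IC-plane graph $G$, that drawing is in fact outer IC-plane. Lemma~\ref{OB1P} then forbids $\ell_b,\ell_w\ge 3$, so $\min\{\ell_b,\ell_w\}\le 2$, and by color symmetry I may assume $\ell_w\le 2$. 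Next, Lemma~\ref{lem:consecutivevertices} forbids three consecutive true vertices of the same color, so every run of consecutive black true vertices in the cyclic order on $\partial(F)$ has length at most $2$. When $\ell_w\le 1$ the black vertices form a single such run, forcing $\ell_b\le 2$ and $\ell_t\le 3$; when $\ell_w=2$ the two white vertices break the blacks into at most two runs, so $\ell_b\le 4$.

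It remains to rule out the single surviving overflow case $(\ell_b,\ell_w)=(4,2)$, and this is where I expect the main difficulty to lie, since Lemma~\ref{OB1P} does not apply to $K_{4,2}$ and the inequalities from (i) and Lemma~\ref{properties of G}(ii) are mutually consistent for it. My plan is to establish the auxiliary fact that $K_{2,4}$ is \emph{not} outer IC-planar, which would contradict $G[T(F)]\cong K_{4,2}$ being outer IC-plane. In any good outer drawing of $K_{2,4}$, every edge meets one of the two vertices of the small part, so two edges can cross only if one is incident to each of these two vertices; hence every crossing is incident to both, and any two crossings would share an endpoint. IC-planarity therefore permits at most one crossing, so deleting one edge of the (at most one) crossing pair leaves a plane drawing with all six vertices still on the outer face, i.e.\ an outerplanar embedding of $K_{2,4}$ or $K_{2,4}-e$. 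But every such graph still contains $K_{2,3}$, which is non-outerplanar --- a contradiction. This eliminates $(\ell_b,\ell_w)=(4,2)$, yielding $\ell_t\le 5$ in all cases, and finally $\ell_f\le\lfloor \ell_t/2\rfloor\le 2$ by (i).
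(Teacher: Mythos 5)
Your proof is correct. For most of its length it parallels the paper's: part (i) is the same count of true neighbours flanking each crossing occurrence, and in part (ii) you reduce, exactly as the paper does, to the single surviving case $(\ell_{b},\ell_{w})=(4,2)$ by combining Lemma~\ref{OB1P} (no outer $1$-planar $K_{3,3}$) with Lemma~\ref{lem:consecutivevertices}. The genuine divergence is in how that case is eliminated. The paper first forces $\ell_{f}=2$ via Lemma~\ref{properties of G}(ii), fixes the cyclic pattern of $\partial(F)$, and then invokes Lemma~\ref{configuration} to conclude that both crossings would have the two black vertices as their remaining end-vertices, violating IC-planarity. You instead prove the standalone fact that $K_{2,4}$ admits no outer IC-plane drawing: every crossing in a good drawing of $K_{2,4}$ is incident to both vertices of the small part, so IC-planarity permits at most one crossing, and removing one edge of that pair would yield an outerplanar drawing of a graph still containing $K_{2,3}$. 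This buys a cleaner, figure-free contradiction that does not need Lemma~\ref{configuration}; its only extra ingredients are the (easily verified) observation that the inherited drawing of $G[T(F)]$ is outer IC-plane and not merely outer $1$-plane, and --- a point both you and the paper leave implicit in (i) --- that a crossing occurring twice on $\partial(F)$ contributes two opposite corners and hence four distinct true vertices, so that $\ell_{t}\ge 2\ell_{f}$ still holds when $\ell_{f}$ counts occurrences with multiplicity.
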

\begin{proof}
(i). By the $1$-planarity of $G$, each false vertex on $\partial(F)$ lies between two true vertices on $\partial(F)$.
Suppose that $\ell_{f}> \frac{1}{2} \ell_{t}$. 
Then, there exists at least one true vertex on $\partial(F)$ incident with two crossings,
a contradiction to that $G$ is IC-plane.
Thus, $(i)$ holds.

(ii). Given that the graph $G$ is 2-connected, and considering its simplicity and the fact that $G$ admits a good drawing, it follows that no face boundary of $G$ contains fewer than two vertices.  Thus we have $\ell_t \geq 2$.

Assume to the contrary that $F$ is a face of $G$ with $\ell_{t}\geq6$.
Let $\ell_b$ and $\ell_w$ denote the numbers of black and white vertices on $\partial(F)$, respectively, and assume without loss of generality that $\ell_b \le \ell_w$.
By Lemma \ref{properties of G} (i), $G[T(F)]\cong K_{\ell_{b}, \ell_{w}}$ and $G[T(F)]$ is outer $1$-plane. 

\begin{claim}
$\ell_{b}= 2$ and $\ell_{w}= 4$.
\end{claim}

\begin{proof}
Suppose $\ell_b = 1$ or $\ell_b = 2$ with $\ell_w \ge 5$. Then there exist at least three consecutive white vertices, contradicting Lemma~\ref{lem:consecutivevertices}. Suppose that $\ell_{b}\ge 3$. Then  $\ell_{w}\ge 3$.  By Lemma \ref{properties of G}(i), $G[T(F)]\supseteq K_{3,3}$, which is not outer 1-planar by Lemma \ref{OB1P}, a contradiction.  Thus the claim holds.
\end{proof}

\begin{claim}
$\ell_{f}=2$.
\end{claim}
\begin{proof}
By (i)  we have $\ell_{f}\leq3$.
By Lemma \ref{properties of G} (ii), 
$\ell_{f}\geq2$, and there are at least two crossings $c_1$ and $c_2$ on $\partial(F)$ whose end-vertices are both white. 
If $\ell_f = 3$, then due to IC-planarity, the two end-vertices of the crossing, other than $c_1$ and $c_2$, on $\partial(F)$ are black.
Consequently, the two black vertices on $\partial(F)$ are consecutive. 
This implies the four white vertices  on $\partial(F)$ are consecutive, a contradiction to Lemma \ref{lem:consecutivevertices}.
Thus $\ell_{f}=2$.
\end{proof}

By the two claims above, we may assume that $\{w_1, w_2, w_3, w_4\}$ is the set of white vertices on $\partial(F)$, $\{b_1, b_2\}$ is the set of black vertices, and $\{\alpha_1, \alpha_2\}$ is the set of crossings on $\partial(F)$.
By Lemma \ref{lem:consecutivevertices}, there are at most two white vertices are consecutive on $\partial(F)$.
Then $F$ is the face depicted in Figure \ref{caseK2,4} $(1)$.
By Lemma \ref{configuration}, we have $c_{1}=b_{1}$, $c_{2}=b_{2}$, $c_{3}=b_{1}$ and $c_{4}=b_{2}$,
as shown in Figure \ref{caseK2,4} $(2)$.
Then $N_{G}(\alpha_{1})\cap N_{G}(\alpha_{2})=\{b_{1},b_{2}\}$, 
a contradiction to IC-planarity of $G$.
Thus $\ell_{t}\leq5$,
and (i) implies $\ell_{f}\leq2$.
Hence, (ii) holds.
\end{proof}

\begin{figure}[h!]
  \centering
\begin{tikzpicture}[scale=0.7]
	\begin{pgfonlayer}{nodelayer}
		\node [style=whitenode] (0) at (-5.75, 3) {};
		\node [style=whitenode] (1) at (-5.75, 0) {};
		\node [style=whitenode] (2) at (-3.25, 3) {};
		\node [style=whitenode] (3) at (-3.25, 0) {};
		\node [style=blacknode] (4) at (-4.5, 3) {};
		\node [style=blacknode] (5) at (-4.5, 0) {};
		\node [style=blacknode] (6) at (-7.5, 0.5) {};
		\node [style=blacknode] (7) at (-7.5, 2.5) {};
		\node [style=blacknode] (8) at (-1.5, 2.5) {};
		\node [style=blacknode] (9) at (-1.5, 0.5) {};
		\node [style=none] (10) at (-4.5, 1.5) {\color{blue}$F$};
		\node [style=whitenode] (11) at (2.25, 2.5) {};
		\node [style=whitenode] (12) at (2.5, 0.5) {};
		\node [style=whitenode] (13) at (5.75, 2.5) {};
		\node [style=whitenode] (14) at (5.75, 0.5) {};
		\node [style=blacknode] (19) at (4, 3) {};
		\node [style=blacknode] (20) at (4, 0) {};
		\node [style=none] (21) at (4, 1.5) {\color{blue}$F$};
		\node [style=none] (22) at (-5.75, 3.5) {$a_{1}$};
		\node [style=none] (23) at (-3.25, 3.5) {$a_{2}$};
		\node [style=none] (24) at (-3.25, -0.5) {$a_{3}$};
		\node [style=none] (25) at (-5.75, -0.5) {$a_{4}$};
		\node [style=none] (26) at (-7.75, 3) {$c_{1}$};
		\node [style=none] (27) at (-7.75, 0) {$c_{2}$};
		\node [style=none] (28) at (-1.15, 2.875) {$c_{3}$};
		\node [style=none] (29) at (-1.225, 0.1) {$c_{4}$};
		\node [style=none] (30) at (2.25, 2.9) {$a_{1}$};
		\node [style=none] (31) at (2.45, 0.075) {$a_{4}$};
		\node [style=none] (32) at (5.825, 2.9) {$a_{2}$};
		\node [style=none] (33) at (-4.5, 3.5) {$b_{1}$};
		\node [style=none] (34) at (-4.5, -0.5) {$b_{2}$};
		\node [style=none] (35) at (4, -0.5) {$b_{2}$};
		\node [style=none] (36) at (4, 3.5) {$b_{1}$};
		\node [style=none] (37) at (5.75, 0.075) {$a_{3}$};
		\node [style=none] (38) at (-4.5, -1.25) {$(1)$};
		\node [style=none] (39) at (4, -1.25) {$(2)$};
		\node [style=none] (40) at (-6.3, 1.5) {$\alpha_{1}$};
		\node [style=none] (41) at (-2.65, 1.5) {$\alpha_{2}$};
		\node [style=none] (42) at (2.05, 1.5) {$\alpha_{1}$};
		\node [style=none] (43) at (6, 1.5) {$\alpha_{2}$};
	\end{pgfonlayer}
	\begin{pgfonlayer}{edgelayer}
		\draw [style=blackedge_thick] (0.center) to (4);
		\draw [style=blackedge_thick] (4) to (2.center);
		\draw [style=blackedge_thick] (5) to (3.center);
		\draw [style=blackedge_thick] (1.center) to (5);
		\draw [style=blackedge_thick] (0.center) to (6.center);
		\draw [style=blackedge_thick] (7.center) to (1.center);
		\draw [style=blackedge_thick] (2.center) to (9.center);
		\draw [style=blackedge_thick] (8.center) to (3.center);
		\draw [style=blackedge_thick, bend left=75, looseness=2.25] (13.center) to (20);
		\draw [style=blackedge_thick, bend left=75, looseness=2.25] (19) to (14.center);
		\draw [style=blackedge_thick] (11.center) to (19);
		\draw [style=blackedge_thick] (19) to (13.center);
		\draw [style=blackedge_thick, bend left=285, looseness=2.50] (19) to (12.center);
		\draw [style=blackedge_thick] (20) to (14.center);
		\draw [style=blackedge_thick] (12.center) to (20);
		\draw [style=blackedge_thick, bend right=75, looseness=2.25] (11.center) to (20);
	\end{pgfonlayer}

\end{tikzpicture}

  \caption{The face $F$ with $\ell_{b}=2$ and $\ell_{w}=4$}
  \label{caseK2,4}
\end{figure}
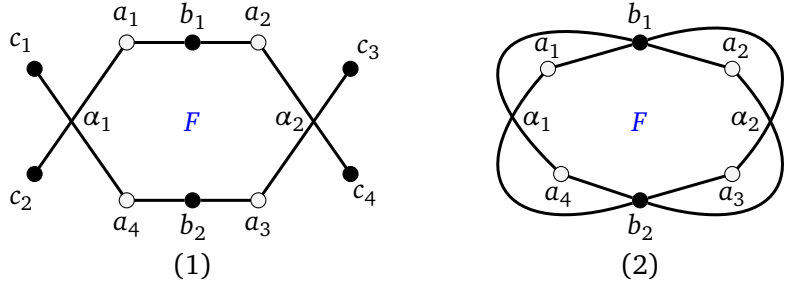

Lemma \ref{true-num} provides a foundation for obtaining more precise structural characterizations of the faces in $G$.
\begin{proposition}\label{seven}
Let $F$ be a face of $G$. Then $F$ belongs to one of the nine configurations shown in Figure~\ref{facein2-con}, from which it follows that $3 \le |F| \le 6$.
\end{proposition}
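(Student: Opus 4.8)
The plan is to extract the combinatorial type of a face $F$ from the two invariants supplied by Lemma~\ref{true-num}, namely the number $\ell_t$ of true vertices and the number $\ell_f$ of crossings on $\partial(F)$, for which we already have $2\le\ell_t\le5$ and $\ell_f\le2$. The governing identity I would first record is $|F|=\ell_t+m$, where $m$ is the number of times the boundary walk passes through a crossing (equivalently, the number of angular sectors at crossings that $F$ occupies): since the boundary walk alternates vertex-appearances with edges or edge-segments, its length equals the number of vertex-appearances, and each true vertex appears once by Lemma~\ref{2-connected property}. By IC-planarity each true vertex is incident with at most one crossing, and by Lemma~\ref{true-num}(i) no two crossings are consecutive, so the structure is controlled once we know, for each crossing on $\partial(F)$, which patch(es) of its tie $T(\alpha)$ (Lemma~\ref{cleantie}) the face uses. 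I would therefore run a case analysis on $\ell_f\in\{0,1,2\}$.

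For $\ell_f=0$ the face is true, and Lemma~\ref{lem:facesbound} forces the quadrangle with $|F|=4$. For $\ell_f=1$, write $ab,cd$ for the crossing edges at the unique $\alpha$, with $a,c$ black and $b,d$ white. If $F$ occupies a $3$-patch of $T(\alpha)$, the bounding clean edge ($ad$ or $cb$) closes $F$ into the triangle, giving the $3$-face with $|F|=3$. If $F$ occupies a single $6$-patch sector, its crossing-link is monochromatic (joining the two black, or the two white, endpoints of $\alpha$); the remaining links are clean edges forming a path between these two equicoloured vertices, and since each clean edge flips colour, a parity count forces the number of clean edges to be even, i.e.\ $\ell_t$ odd, whence $\ell_t\in\{3,5\}$ (Lemma~\ref{lem:consecutivevertices} excludes longer monochromatic runs). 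When $\ell_t=5$ I would invoke Lemma~\ref{configuration} (valid as $|F|\ge5$) to force $\{b,d\}\subseteq\partial(F)$ and so fix the cyclic colour pattern exactly. If instead $F$ occupies both $6$-patch sectors of $\alpha$, the crossing is traversed twice, all of $a,b,c,d$ lie on $\partial(F)$, and the analogous parity count forces $\ell_t$ even; with $\ell_t\le5$ this leaves only $\ell_t=4$, the $6$-patch face with $|F|=6$.

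The decisive step is excluding $\ell_f=2$. Here $\ell_f\le\tfrac12\ell_t$ forces $\ell_t\ge4$, hence $|F|=\ell_t+m\ge6\ge5$. Neither crossing can meet $\partial(F)$ through a $3$-patch sector, since that would close the face into a triangle carrying a single crossing; so each crossing is met through a $6$-patch sector, placing both of its equicoloured endpoints on $\partial(F)$. Applying Lemma~\ref{configuration} at each crossing then forces all four endpoints of each crossing onto $\partial(F)$, and by IC-planarity the two crossings share no endpoint, so $\partial(F)$ would contain eight distinct true vertices, contradicting $\ell_t\le5$. Thus $\ell_f\le1$.

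Collecting the surviving cases, together with their colour types, yields precisely the configurations of Figure~\ref{facein2-con}; reading off their sizes gives $|F|\in\{3,4,6\}$, and in particular $3\le|F|\le6$. I expect the main obstacle to be the single-$6$-patch subcase of $\ell_f=1$, where one must run the colouring/parity argument (to exclude even $\ell_t$, and thereby the value $|F|=5$) simultaneously with the forcing from Lemma~\ref{configuration} (to cap $\ell_t$ at $5$ and pin down the exact cyclic colour pattern), all while keeping consistent track of which tie-patch each crossing contributes along the full boundary walk.
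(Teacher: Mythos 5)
Your reorganization of the case analysis around $\ell_f$ instead of the paper's $\ell_t$ would be legitimate, but the argument contains a false step that causes you to exclude two of the nine configurations that genuinely occur. The step is the assertion that whenever $F$ meets a crossing $\alpha$ through a $3$-patch sector of $T(\alpha)$, the clean edge $ad$ (or $cb$) ``closes $F$ into the triangle.'' This holds only if that $3$-patch is itself a face of $G$. Since $G$ is assumed only $2$-connected, a $3$-patch may contain further vertices of $G$ --- this is precisely the situation of a bad (non-clean) tie in Section~3.2, and it is only under $3$-connectivity that Proposition~\ref{3-connected property}(i) forces every tie to be clean. In that situation $F$ occupies the bichromatic sector at $\alpha$ (one black and one white end-vertex of $\alpha$ on $\partial(F)$) while its boundary continues between them along a path of clean edges drawn inside the $3$-patch. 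For $\ell_f=1$ this path has an odd number of edges, and the three-edge case is exactly the $5$-face of Figure~\ref{facein2-con}(5); your parity argument, run only for the monochromatic ($6$-patch) sectors, never sees it, and your conclusion $|F|\in\{3,4,6\}$ is false.

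The same oversight invalidates your exclusion of $\ell_f=2$. If both crossings are met through bichromatic ($3$-patch) sectors, the hypothesis of Lemma~\ref{configuration} --- that the two \emph{same-coloured} end-vertices of the crossing lie on $\partial(F)$ --- is not satisfied, so you cannot force all four end-vertices of each crossing onto $\partial(F)$ and derive eight true vertices. The face of Figure~\ref{facein2-con}(6), with two distinct crossings and only four true vertices, survives, and by IC-planarity it is in fact the only way two distinct crossings can coexist on one boundary. Both configurations you lose are needed downstream (Proposition~\ref{tie and face} and the $f_5$, $f_6$ terms in the Euler count of Subcase~\ref{subcase2}), so the gap is not cosmetic. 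The paper's proof avoids the trap by classifying faces by $\ell_t$ together with the colours of the crossing's end-vertices \emph{on} $\partial(F)$, never assuming that a $3$-patch sector can only be entered by the $3$-face itself.
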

\begin{proof}
Recall that $G$ is $2$-connected, by Lemma \ref{2-connected property},
no true vertex on $\partial(F)$ is counted more than once.
Let $\ell_{t}$ denote the number of true vertices on $\partial (F)$.
By Lemma \ref{true-num}(ii), we have $2\leq \ell_{t} \leq 5$.
\setcounter{claim}{0}
\renewcommand{\thecase}{\arabic{claim}}
\begin{claim}
If $\ell_{t}=2$, $F$ is shown in Figure \ref{facein2-con} (1).
\end{claim}
\begin{proof}
If $\ell_{t}=2$, then $\ell_{f}\leq 1$ by Lemma \ref{true-num} (i).
If $\ell_{f}=0$, then $F$ is true 2-face, contradicting Lemma \ref{lem:facesbound}.
If $\ell_{f}=1$, then $F$ is a false $3$-face with two true vertices of different colors (i.e., white and black), 
as shown in Figure \ref{facein2-con} (1).
\end{proof}

\begin{claim}
If $\ell_{t}=3$, $F$ is shown in Figure \ref{facein2-con} (2) or (3).
\end{claim}

\begin{proof}
If $\ell_{t}=3$, by Lemma \ref{true-num} (i),  then $\ell_{f}\leq 1$.
Lemma \ref{lem:facesbound}  yields that $\ell_{f}\neq 0$.
Thus $\ell_{f}=1$.
By Lemma \ref{lem:consecutivevertices}, three consecutive vertices on $\partial(F)$ can not share the same color.
Therefore, only two cases need to be considered:
$(1)$ $\ell_{b}=1$ and $\ell_{w}=2$ or (2)  $\ell_{b}=2$ and $\ell_{w}=1$, 
where $\ell_{b}$ and $\ell_{w}$ are the numbers of black and white vertices on $\partial(F)$, respectively.
In both cases, $|\ell_{b}-\ell_{w}|=1$.
By Lemma \ref{properties of G} $(ii)$, 
$F$ is a false $4$-face where two end-vertices of the crossing on $\partial(F)$ share the same color, 
as shown in Figure \ref{facein2-con} $(2)$ or $(3)$.
\end{proof}


\begin{claim}
If $\ell_{t}=4$, $F$ is shown in Figure \ref{facein2-con} (4)-(7).
\end{claim}

If $\ell_{t}=4$, 
then $\ell_{b}=\ell_{w}=2$,
otherwise, it contradicts Lemma \ref{lem:consecutivevertices}.
By Lemma \ref{true-num} (i), we have $\ell_{f}\leq 2$.
If $\ell_{f}=0$, then $F$ is a true $4$-face, as shown in Figure \ref{facein2-con} (4).
If $\ell_{f}=1$, then $F$ is a false 5-face, as shown in Figure \ref{facein2-con} (5).
If $\ell_{f}=2$, let $c_{1}$ and $c_{2}$ be the two crossings on $\partial(F)$. 
If $c_{1}\neq c_{2}$, by IC-planarity, then $N_{G}(c_{1})\cap N_{G}(c_{2})=\varnothing$.
Thus $F$ is false 6-face in which both $c_{1}$ and $c_{2}$ have one end-vertex white and the other black on $\partial(F)$,
as shown in Figure \ref{facein2-con} $(6)$.
Otherwise, if the two end-vertices of $c_{1}$ on $\partial(F)$ have the same color, say white, 
then by IC-planarity, the two end-vertices of $c_{2}$ on $\partial(F)$ would be black.
In this case, $N_{G}(c_{1})\cap N_{G}(c_{2})\neq\varnothing$ by Lemma \ref{configuration}, a contradiction to IC-planarity.
If $c_{1}= c_{2}$,
then $F$ is a false 6-face, which is isomorphic to the $6$-patch of a tie, as shown in Figure \ref{facein2-con} $(7)$ or Figure \ref{tie} $(2)$.
\begin{claim}
If $\ell_{t}=5$,  $F$ is shown in Figure \ref{facein2-con} (8)-(9).
\end{claim}

If $\ell_{t}=5$,
by Lemma \ref{lem:consecutivevertices}, only two cases need to be considered:
$(1)$ $\ell_{b}=2$ and $\ell_{w}=3$ or $(2)$  $\ell_{b}=3$ and $\ell_{w}=2$.
By Lemma \ref{true-num} $(i)$, we obtain $\ell_{f}\leq 2$.
Note that in both cases, $|\ell_{b}-\ell_{w}|=1$,
Lemma \ref{properties of G} (ii) implies that $\ell_{f}\geq1$
and there exists one crossing $c_{0}$ on $\partial(F)$ with two end-vertices share the same color.
Consequently, by Lemma \ref{configuration}, we have $N_{G}(c_{0})\subseteq \partial(F)$.
Then there are four vertices on $\partial(F)$ adjacent to $c_{0}$.
Due to IC-planarity, there are no crossings other than $c_{0}$ on $\partial(F)$, thus $\ell_{f}=1$.
Then $F$ is a false 6-face with exactly one crossing $c_{0}$, where both end-vertices of $c_{0}$ on $\partial(F)$ share the same color,
as shown in Figure \ref{facein2-con} $(8)$ and $(9)$.

From the above analysis, if $F$ is a face of $G$, then $3\le |F|\le 6$.
\end{proof}

\begin{figure}[h!]
  \centering
\begin{tikzpicture}[scale=0.8]
	\begin{pgfonlayer}{nodelayer}
		\node [style=whitenode] (0) at (-8, 7) {};
		\node [style=blacknode] (1) at (-6, 7) {};
		\node [style=whitenode] (2) at (-7.75, 5) {};
		\node [style=blacknode] (3) at (-6.25, 5) {};
		\node [style=blacknode] (4) at (-3.5, 7) {};
		\node [style=whitenode] (5) at (-4.5, 6) {};
		\node [style=whitenode] (6) at (-2.5, 6) {};
		\node [style=blacknode] (7) at (-4, 5) {};
		\node [style=blacknode] (8) at (-3, 5) {};
		\node [style=whitenode] (9) at (2.5, 7) {};
		\node [style=blacknode] (10) at (2.5, 5) {};
		\node [style=blacknode] (11) at (4.5, 7) {};
		\node [style=whitenode] (12) at (4.5, 5) {};
		\node [style=blacknode] (13) at (6, 6.6) {};
		\node [style=whitenode] (14) at (8.075, 6.6) {};
		\node [style=whitenode] (15) at (6, 5.75) {};
		\node [style=blacknode] (16) at (8.075, 5.75) {};
		\node [style=whitenode] (17) at (6.425, 5) {};
		\node [style=blacknode] (18) at (7.7, 5) {};
		\node [style=whitenode] (19) at (-6.775, 3) {};
		\node [style=blacknode] (20) at (-5.375, 3) {};
		\node [style=blacknode] (21) at (-6.775, 1) {};
		\node [style=whitenode] (22) at (-5.375, 1) {};
		\node [style=whitenode] (23) at (-7.525, 2.525) {};
		\node [style=blacknode] (24) at (-7.525, 1.5) {};
		\node [style=blacknode] (25) at (-4.6, 2.525) {};
		\node [style=whitenode] (26) at (-4.6, 1.5) {};
		\node [style=whitenode] (27) at (-2.025, 3.05) {};
		\node [style=blacknode] (28) at (-0.525, 3.05) {};
		\node [style=whitenode] (29) at (-2.025, 1.55) {};
		\node [style=blacknode] (30) at (-0.525, 1.55) {};
		\node [style=whitenode] (31) at (2.975, 3) {};
		\node [style=blacknode] (32) at (1.975, 2.5) {};
		\node [style=blacknode] (33) at (3.975, 2.5) {};
		\node [style=whitenode] (34) at (2.225, 1.5) {};
		\node [style=whitenode] (35) at (3.725, 1.5) {};
		\node [style=none] (36) at (1.975, 2.5) {};
		\node [style=none] (37) at (3.975, 2.5) {};
		\node [style=none] (38) at (-7, 4) {$(1)$};
		\node [style=none] (39) at (-3.5, 4) {$(2)$};
		\node [style=none] (40) at (3.5, 4) {$(4)$};
		\node [style=none] (41) at (7, 4) {$(5)$};
		\node [style=none] (42) at (-6.025, 0) {$(6)$};
		\node [style=none] (43) at (-1.275, 0) {$(7)$};
		\node [style=none] (44) at (2.975, 0) {$(8)$};
		\node [style=none] (45) at (-7, 5.875) {};
		\node [style=none] (46) at (-3.5, 5.35) {};
		\node [style=none] (47) at (7.075, 5.3) {};
		\node [style=none] (48) at (-7.275, 2) {};
		\node [style=none] (49) at (-4.85, 2) {};
		\node [style=none] (50) at (-1.275, 2.3) {};
		\node [style=none] (51) at (2.975, 1.175) {};
		\node [style=none] (52) at (-7, 6.55) {\color{blue}$F$};
		\node [style=none] (53) at (-3.5, 6.175) {\color{blue}$F$};
		\node [style=none] (54) at (3.5, 6) {\color{blue}$F$};
		\node [style=none] (55) at (7, 6.025) {\color{blue}$F$};
		\node [style=none] (56) at (-6.025, 2) {\color{blue}$F$};
		\node [style=none] (57) at (-2.025, 2.25) {\color{blue}$F$};
		\node [style=none] (58) at (2.975, 2) {\color{blue}$F$};
		\node [style=whitenode] (59) at (0.075, 7) {};
		\node [style=blacknode] (60) at (-0.925, 6) {};
		\node [style=blacknode] (61) at (1.075, 6) {};
		\node [style=whitenode] (62) at (-0.425, 5) {};
		\node [style=whitenode] (63) at (0.575, 5) {};
		\node [style=none] (64) at (0.075, 4) {$(3)$};
		\node [style=none] (65) at (0.075, 5.35) {};
		\node [style=none] (66) at (0.075, 6.175) {\color{blue}$F$};
		\node [style=blacknode] (67) at (7.05, 3) {};
		\node [style=whitenode] (68) at (6.05, 2.5) {};
		\node [style=whitenode] (69) at (8.05, 2.5) {};
		\node [style=blacknode] (70) at (6.3, 1.5) {};
		\node [style=blacknode] (71) at (7.8, 1.5) {};
		\node [style=none] (72) at (6.05, 2.5) {};
		\node [style=none] (73) at (8.05, 2.5) {};
		\node [style=none] (74) at (7.05, 0) {$(9)$};
		\node [style=none] (75) at (7.05, 1.175) {};
		\node [style=none] (76) at (7.05, 2) {\color{blue}$F$};
		\node [style=none] (77) at (2.975, 0.8) {};
		\node [style=none] (78) at (7.05, 0.8) {};
	\end{pgfonlayer}
	\begin{pgfonlayer}{edgelayer}
\fill[blue!10] (0.center) to (1.center) to (45.center) to (0.center);
\fill[blue!10, even odd rule] (4.center) to (6.center) to (46.center) to (5.center) to (4.center);
\fill[blue!10] (59.center) to (61.center) to (65.center) to (60.center) to (59.center);
\fill[blue!10] (9.center) to (11.center) to (12.center) to (10.center) to (9.center);
\fill[blue!10] (13.center) to (14.center) to (16.center) to (47.center) to (15.center) to (13.center);
\fill[blue!10] (19.center) to (20.center) to (49.center) to (22.center) to (21.center) to (48.center) to (19.center);
\fill [fill=blue!10] (27.center) to (50.center) to (29.center);
\fill [fill=blue!10] (28.center) to (50.center) to (30.center);
\fill[blue!10] (28.center) to (30.center) to (29.center) to (27.center) [bend right=135, looseness=7.25] to (28.center);
\fill[blue!10] (31.center) to (37.center) to (35.center) [bend left=15, looseness=1.25] to (77.center) [bend left=15, looseness=1.25] to (34.center); 
\fill[blue!10] (36.center) to  (31.center) to (34.center) to (36.center);
\fill[blue!10] (67.center) to (73.center) to (71.center) [bend left=15, looseness=1.25] to (78.center) [bend left=15, looseness=1.25] to (70.center); 
\fill[blue!10] (72.center) to  (67.center) to (70.center) to (72.center);
		\draw [style={blackedge_thick}] (0) to (1);
		\draw [style={blackedge_thick}] (1) to (2);
		\draw [style={blackedge_thick}] (0) to (3);
		\draw [style={blackedge_thick}] (5) to (4);
		\draw [style={blackedge_thick}] (4) to (6);
		\draw [style={blackedge_thick}] (5) to (8);
		\draw [style={blackedge_thick}] (6) to (7);
		\draw [style={blackedge_thick}] (9) to (11);
		\draw [style={blackedge_thick}] (11) to (12);
		\draw [style={blackedge_thick}] (9) to (10);
		\draw [style={blackedge_thick}] (10) to (12);
		\draw [style={blackedge_thick}] (13) to (14);
		\draw [style={blackedge_thick}] (13) to (15);
		\draw [style={blackedge_thick}] (14) to (16);
		\draw [style={blackedge_thick}] (15) to (18);
		\draw [style={blackedge_thick}] (16) to (17);
		\draw [style={blackedge_thick}] (19) to (20);
		\draw [style={blackedge_thick}] (21) to (22);
		\draw [style={blackedge_thick}] (19) to (24);
		\draw [style={blackedge_thick}] (23) to (21);
		\draw [style={blackedge_thick}] (20) to (26);
		\draw [style={blackedge_thick}] (25) to (22);
		\draw [style={blackedge_thick}, bend right=135, looseness=6.25] (27) to (28);
		\draw [style={blackedge_thick}] (28) to (29);
		\draw [style={blackedge_thick}] (29) to (30);
		\draw [style={blackedge_thick}] (27) to (30);
		\draw [style={blackedge_thick}] (32) to (31);
		\draw [style={blackedge_thick}] (31) to (33);
		\draw [style={blackedge_thick}] (32) to (34);
		\draw [style={blackedge_thick}] (33) to (35);
		\draw [style={blackedge_thick}] (60) to (59);
		\draw [style={blackedge_thick}] (59) to (61);
		\draw [style={blackedge_thick}] (60) to (63);
		\draw [style={blackedge_thick}] (61) to (62);
		\draw [style={blackedge_thick}] (68) to (67);
		\draw [style={blackedge_thick}] (67) to (69);
		\draw [style={blackedge_thick}] (68) to (70);
		\draw [style={blackedge_thick}] (69) to (71);
		\draw [style={blackedge_thick}, bend right=90, looseness=2.25] (34) to (37.center);
		\draw [style={blackedge_thick}, bend right=270, looseness=2.25] (35) to (36.center);
		\draw [style={blackedge_thick}, bend right=90, looseness=2.25] (70) to (73.center);
		\draw [style={blackedge_thick}, bend right=270, looseness=2.25] (71) to (72.center);
		\draw [style={blackedge_thick}, bend left=135, looseness=2.75] (15) to (16);
		\draw [style={blackedge_thick}, bend right=105, looseness=2.00] (19) to (21);
		\draw [style={blackedge_thick}, bend left=105, looseness=2.00] (20) to (22);
	\end{pgfonlayer}
\end{tikzpicture}
 \caption{All possible faces $F$ (colored in purple in the electronic version) in $G$}
  \label{facein2-con}
\end{figure}
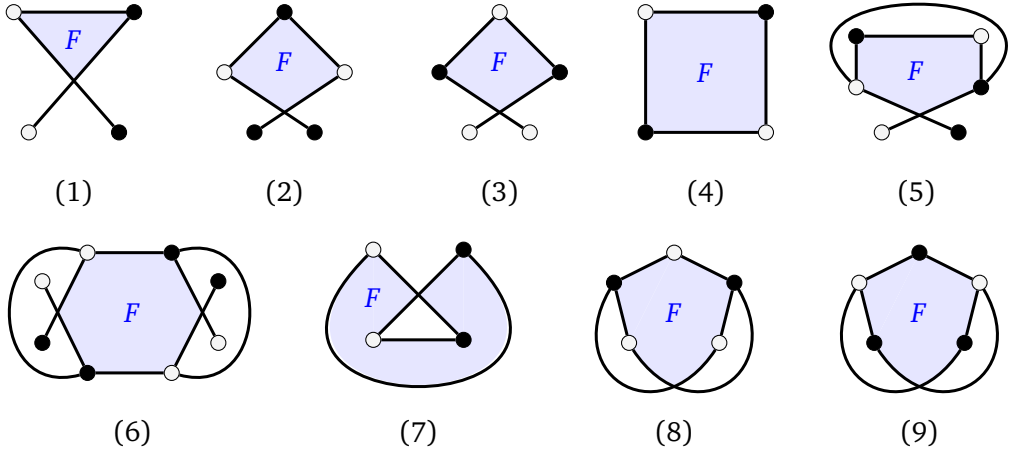

Based on Proposition~\ref{seven}, we can classify the set of faces in $G$ according to their sizes and structural characteristics.
Throughout the rest of this paper, we denote by $\mathcal{F}_{k}$ the set of $k$-faces in $G$.

For 4-faces of $G$, we define:
\begin{itemize}
    \item $\mathcal{A}_{4}$: the set of 4-faces shown in Figure~\ref{facein2-con} (2),
    \item $\mathcal{B}_{4}$: the set of 4-faces shown in Figure~\ref{facein2-con} (3),
    \item $\mathcal{D}_{4}$: the set of 4-faces shown in Figure~\ref{facein2-con} (4).
\end{itemize}

Therefore, these satisfy the following:
\begin{itemize}
    \item Each face in $\mathcal{A}_{4}$ is a false 4-face whose boundary contains a black vertex, two white vertices, and a crossing.
    \item Each face in $\mathcal{B}_{4}$ is a false 4-face whose boundary contains a white vertex, two black vertices, and a crossing.
    \item Each face in $\mathcal{D}_{4}$ is a true 4-face whose boundary contains four clean edges.
\end{itemize}

For 6-faces of $G$, we define:
\begin{itemize}
    \item $\mathcal{A}_{6}$: the set of 6-faces shown in Figure~\ref{facein2-con} (6),
    \item $\mathcal{B}_{6}$: the set of 6-faces shown in Figure~\ref{facein2-con} (7),
    \item $\mathcal{D}_{6}$: the set of 6-faces shown in Figure~\ref{facein2-con} (8),
    \item $\mathcal{E}_{6}$: the set of 6-faces shown in Figure~\ref{facein2-con} (9).
\end{itemize}

These satisfy the following:
\begin{itemize}
    \item Each face in $\mathcal{A}_{6}$ is a false 6-face whose boundary contains two white vertices, two black vertices, and two crossings.
    \item Each face in $\mathcal{B}_{6}$ is a false 6-face whose boundary contains two white vertices, two black vertices, and one crossing (counted twice).
    \item Each face in $\mathcal{D}_{6}$ is a false 6-face whose boundary contains three white vertices, two black vertices, and one crossing.
    \item Each face in $\mathcal{E}_{6}$ is a false 6-face whose boundary contains three black vertices, two white vertices, and one crossing.
\end{itemize}

The following corollary follows from Proposition~\ref{seven}.
\begin{corollary}
$|\mathcal{F}_{4}| = |\mathcal{A}_{4}| + |\mathcal{B}_{4}| + |\mathcal{D}_{4}|$ and $
|\mathcal{F}_{6}| = |\mathcal{A}_{6}| + |\mathcal{B}_{6}| + |\mathcal{D}_{6}| + |\mathcal{E}_{6}|.
$
\end{corollary}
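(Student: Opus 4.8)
The plan is to establish both identities by showing that each listed collection of faces is a \emph{partition} of the corresponding face set; the equalities then follow from the additivity of cardinality over a disjoint union. I treat the two identities in parallel.

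First I would apply Proposition~\ref{seven}, which asserts that every face of $G$ is isomorphic to one of the nine configurations of Figure~\ref{facein2-con}. Among these, exactly configurations (2), (3) and (4) have size $4$, and exactly configurations (6)--(9) have size $6$. By the very definitions of the classes, the $4$-faces of type (2), (3), (4) are precisely the members of $\mathcal{A}_4$, $\mathcal{B}_4$, $\mathcal{D}_4$, respectively, so $\mathcal{F}_4 = \mathcal{A}_4 \cup \mathcal{B}_4 \cup \mathcal{D}_4$; likewise $\mathcal{F}_6 = \mathcal{A}_6 \cup \mathcal{B}_6 \cup \mathcal{D}_6 \cup \mathcal{E}_6$. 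The reverse inclusions are immediate, since every face in each class is by definition a face of the stated size in $G$.

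Next I would verify pairwise disjointness. For a face $F$ I record the triple $(\ell_b,\ell_w,\ell_f)$ of numbers of black, white and false vertices on $\partial(F)$, which can be read directly off Figure~\ref{facein2-con}. For the $4$-faces one obtains $(1,2,1)$ for $\mathcal{A}_4$, $(2,1,1)$ for $\mathcal{B}_4$, and $(2,2,0)$ for $\mathcal{D}_4$; as these three triples are pairwise distinct, no $4$-face belongs to two of the classes, so $\mathcal{A}_4,\mathcal{B}_4,\mathcal{D}_4$ are pairwise disjoint. For the $6$-faces, $\mathcal{D}_6$ has $\ell_b=2,\ell_w=3$ and $\mathcal{E}_6$ has $\ell_b=3,\ell_w=2$, so these two are separated from each other and from $\mathcal{A}_6,\mathcal{B}_6$, both of which satisfy $\ell_b=\ell_w=2$. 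The only remaining point is to distinguish $\mathcal{A}_6$ from $\mathcal{B}_6$: a face in $\mathcal{A}_6$ has two \emph{distinct} crossings on its boundary (configuration (6)), whereas a face in $\mathcal{B}_6$ has a single crossing traversed twice, forming the $6$-patch of a tie (configuration (7)); hence a $6$-face cannot lie in both. Thus $\mathcal{A}_6,\mathcal{B}_6,\mathcal{D}_6,\mathcal{E}_6$ are pairwise disjoint as well.

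Finally, since $\mathcal{F}_4$ and $\mathcal{F}_6$ are each expressed as a disjoint union of the listed classes, their cardinalities are the sums of the cardinalities of those classes, which is exactly the assertion of the corollary. I anticipate no genuine obstacle here: the whole argument is a bookkeeping consequence of Proposition~\ref{seven}, the only subtlety being the separation of $\mathcal{B}_6$ from $\mathcal{A}_6$, which hinges on whether the two crossing-incidences on the boundary arise from one crossing or from two.
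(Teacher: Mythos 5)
Your proposal is correct and matches the paper's (unstated but clearly intended) argument: the corollary is presented as an immediate consequence of Proposition~\ref{seven}, namely that the nine configurations exhaust all faces, the $4$-faces are exactly types (2)--(4) and the $6$-faces exactly types (6)--(9), and the classes are pairwise disjoint by their distinct boundary compositions. Your extra care in separating $\mathcal{A}_6$ from $\mathcal{B}_6$ (two distinct crossings versus one crossing counted twice) is a sensible bit of bookkeeping that the paper leaves implicit.
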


\subsection{Properties of two special  types of ties}

\noindent Let $T(\alpha)$ be a tie in $G$. Then we define two special ties in $G$:
\begin{itemize}
    \item $T(\alpha)$ is \emph{clean} if both $3$-patches are faces of $G$;
    \item $T(\alpha)$ is \emph{bad} if the $6$-patch and exactly one of the $3$-patches are faces of $G$.
\end{itemize}

For example, in Figure \ref{tie3}, $T(\alpha_{2})$ is a clean tie and $T(\alpha_{3})$ is a bad tie in $G$.

A face $F$ is {\it incident} with a tie $T(\alpha)$  if $\alpha \in \partial(F)$.
Based on Proposition~\ref{seven}, we derive the following relation between the bad and clean ties \(T(\alpha)\) and face types in \(G\).
\begin{proposition}
\label{tie and face}
In $G$, the following statements hold.
\begin{itemize}

\item[(i)] Each bad tie of $G$ is incident with exactly one false 3-face in $\mathcal{\mathcal{F}}_{3}$, exactly one false 6-face in $\mathcal{B}_{6}$,
and exactly one false face in $\mathcal{\mathcal{F}}_{5}\cup \mathcal{\mathcal{A}}_{6}$;

\item[(ii)] each clean tie of  $G$ is incident with exactly two false 3-faces in $\mathcal{F}_{3}$ and
at most one false 6-face in $\mathcal{D}_{6}\cup \mathcal{E}_{6}$;

\item[(iii)] each face in $\mathcal{F}_{5}\cup \mathcal{B}_{6}$ is incident with exactly one bad tie, and each face in  $\mathcal{A}_{6}$ is incident with exactly two bad ties; and

\item[(iv)] each face in $\mathcal{A}_{4}\cup \mathcal{B}_{4}\cup \mathcal{D}_{6}\cup \mathcal{E}_{6}$ is incident with exactly one clean tie.
\end{itemize}
\end{proposition}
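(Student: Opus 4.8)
The unifying device is a local analysis around each crossing. By Lemma~\ref{cleantie}, every crossing $\alpha$ is the crossing of a tie $T(\alpha)$ with black vertices $a,c$, white vertices $b,d$, crossing edges $ab,cd$ and clean edges $ad,cb$. In the associated plane graph $G^{\times}$ the point $\alpha$ is a true-degree-$4$ vertex with exactly four corners, whose bounding segments occur in the cyclic order $a\alpha,c\alpha,b\alpha,d\alpha$. Thus the two corners $\{a,c\}$ and $\{b,d\}$ are \emph{monochromatic} (one all-black, one all-white) and are the corners flanking the $6$-patch $R_3$, while the two corners $\{c,b\}$ and $\{d,a\}$ are \emph{bichromatic} and are exactly the $3$-patches $R_1,R_2$, capped respectively by the clean edges $cb$ and $ad$. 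The plan is to turn this into a dictionary between corners and face types, and then read off all four parts.

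First I would record the dictionary using Proposition~\ref{seven}: at a crossing lying on a face boundary, the two boundary-neighbours of the crossing are bichromatic precisely for faces in $\mathcal{F}_{3}\cup\mathcal{F}_{5}\cup\mathcal{A}_{6}$ and monochromatic precisely for faces in $\mathcal{A}_{4}\cup\mathcal{B}_{4}\cup\mathcal{D}_{6}\cup\mathcal{E}_{6}$, whereas a $\mathcal{B}_{6}$-face is itself the $6$-patch, occupying both monochromatic corners of $\alpha$ at once. Consequently a bichromatic corner is filled by an $\mathcal{F}_{3}$-face exactly when the incident $3$-patch is a face of $G$, and otherwise by a face of $\mathcal{F}_{5}\cup\mathcal{A}_{6}$; a monochromatic corner is filled by the $\mathcal{B}_{6}$-face when $R_3$ is a face, and otherwise by a white-corner face ($\mathcal{A}_{4}$ or $\mathcal{D}_{6}$) or a black-corner face ($\mathcal{B}_{4}$ or $\mathcal{E}_{6}$), matching the corner colour.

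For (i) and (ii) this dictionary is essentially immediate. If $T(\alpha)$ is bad, then $R_3$ is a face, hence the two monochromatic corners form a single $\mathcal{B}_{6}$-face; the unique $3$-patch that is a face contributes one $\mathcal{F}_{3}$-face; and the remaining (subdivided) $3$-patch contributes one face of $\mathcal{F}_{5}\cup\mathcal{A}_{6}$. These exhaust the four corners and are pairwise distinct, giving~(i). If $T(\alpha)$ is clean, both $3$-patches are faces, yielding two $\mathcal{F}_{3}$-faces, and the two monochromatic corners are filled by $6$-patch faces. For the bound ``at most one face in $\mathcal{D}_{6}\cup\mathcal{E}_{6}$'' I would argue by contradiction: if the black corner carried an $\mathcal{E}_{6}$-face and the white corner a $\mathcal{D}_{6}$-face, then since each has size $6\ge 5$, Lemma~\ref{configuration} forces all of $a,b,c,d$ onto \emph{both} of their boundaries, so both faces would have to use the clean edges $ad$ and $cb$ on their boundaries; but $ad$ and $cb$ each already bound an $\mathcal{F}_{3}$ ($3$-patch) face on one side, leaving only one free side, which cannot be claimed by two distinct $6$-faces. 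This contradiction yields~(ii).

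Parts (iii) and (iv) amount to inverting the dictionary, counting one incident tie per crossing on the face boundary (so a single tie for each face in $\mathcal{F}_{5}\cup\mathcal{B}_{6}\cup\mathcal{A}_{4}\cup\mathcal{B}_{4}\cup\mathcal{D}_{6}\cup\mathcal{E}_{6}$, and exactly two for each $\mathcal{A}_{6}$-face, whose boundary carries two independent crossings). The content is to certify the \emph{type} of each incident tie: a $\mathcal{B}_{6}$-face is a $6$-patch so its tie has $R_3$ a face; a face of $\mathcal{F}_{5}\cup\mathcal{A}_{6}$ sits at a subdivided $3$-patch; and a face of $\mathcal{A}_{4}\cup\mathcal{B}_{4}\cup\mathcal{D}_{6}\cup\mathcal{E}_{6}$ sits at a subdivided monochromatic corner. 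I expect the main obstacle to be exactly this certification step: one must show that the \emph{remaining} corners of $\alpha$ are forced into the bad pattern (for (iii)) or the clean pattern (for (iv)), i.e.\ that the complementary $3$-patch and $6$-patch regions are genuinely faces. The principal tool is again Lemma~\ref{configuration}, which for any incident face of size at least $5$ places all four endpoints $a,b,c,d$ of $\alpha$ on its boundary and thereby pins down the adjacent regions; combined with the maximality condition (Lemma~\ref{lem:face}) and IC-planarity, which forbid further crossings or insertable edges inside the small residual regions, this should close off the undesired configurations and secure the exact counts ``exactly one'', ``exactly two'', and ``at most one'' asserted in the statement.
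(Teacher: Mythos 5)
Your corner dictionary at a crossing $\alpha$ (the two bichromatic corners are the $3$-patches $R_1,R_2$, the two monochromatic corners are the two visits of the $6$-patch $R_3$) is exactly the structure underlying the paper's argument, and your treatment of (i) and (ii) is correct and essentially the paper's proof. Indeed, your justification of ``at most one face in $\mathcal{D}_6\cup\mathcal{E}_6$'' --- both such faces would be forced by Lemma~\ref{configuration} to use the clean edges $ad$ and $cb$, each of which already bounds a $3$-patch face on one side --- is spelled out more explicitly than in the paper, which essentially asserts this count.

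The gap is in (iii) and (iv). You correctly identify that the substance is to certify that the tie met by a given face is bad (resp.\ clean), i.e.\ that the \emph{complementary} regions of $T(\alpha)$ really are faces, but you do not carry this out, and the tool you name cannot do it. Lemma~\ref{configuration} assumes that \emph{both same-coloured} end-vertices $a,c$ of the crossing already lie on $\partial(F)$; a face of $\mathcal{F}_5\cup\mathcal{A}_6$ sits at a bichromatic corner and carries only one black and one white endpoint of $\alpha$ on its boundary (see Figure~\ref{facein2-con}(5)), so the lemma does not apply there and its conclusion (all four endpoints on $\partial(F)$) is actually false for such faces. More seriously, no local argument can close (iii) as stated for a general $2$-connected MBICP-graph: one can place a path $a\,v\,w\,d$ inside $R_1$ and a path $c\,v'\,w'\,b$ inside $R_2$ of a tie and obtain a maximal, $2$-connected, bipartite IC-plane graph on $8$ vertices whose $6$-patch is a face in $\mathcal{B}_6$ while neither $3$-patch is a face; that $\mathcal{B}_6$-face is then incident with no bad tie. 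What makes (iii) and (iv) work where the proposition is actually used (Subcase~\ref{subcase2}, where $r(\alpha)\le 1$ for every crossing) is the standing fact that every tie there is clean or bad; granted that, your dictionary finishes the job, since a face of $\mathcal{F}_5\cup\mathcal{A}_6\cup\mathcal{B}_6$ excludes ``clean'' and a face of $\mathcal{A}_4\cup\mathcal{B}_4\cup\mathcal{D}_6\cup\mathcal{E}_6$ excludes ``bad''. The paper's own write-up of (iii)/(iv) is comparably terse (it merely cites (i) and (ii)), but as written your plan leaves the decisive certification step unproved and points at a lemma that does not deliver it.
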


\begin{proof}
Prove (i) and (ii).

Let $ab$ and $cd$ be two edges in $G$ that cross at a point $\alpha$, and let $T(\alpha)$ be a tie of $G$, where $\{a,c\}\subseteq X$ and $\{b,d\}\subseteq Y$.
Assume that $T(\alpha)$ is bad in $G$.
By the definition of a bad tie, 
$T(\alpha)$ is incident to a false $3$-face (say $ad\alpha a$) in $\mathcal{\mathcal{F}}_{3}$ and a false $6$-face (i.e., $ad\alpha bc\alpha a$) in $\mathcal{B}_{6}$, 
as shown in Figure \ref{facein2-con} $(1)$ and $(7)$.
Since the remaining $3$-patch of $T(\alpha)$ is not a face in $G$, the two edges segments $b\alpha$ and $c\alpha$ lie on the boundary of a face $F$ with $|F|\geq4$.
As vertices $b$ and $c$ share different colors, 
by Lemma \ref{seven}, it is not difficult to verify that the other face incident to $\alpha$ can only be a face in $\mathcal{F}_{5}$ or $\mathcal{A}_{6}$ , 
as shown in Figure \ref{facein2-con} $(5)$ and $(6)$.
Then (i) holds.

Assume that $T(\alpha)$ is clean in $G$.
By the definition of a clean tie, $T(\alpha)$ is incident to two false 3-faces (i.e., $ad\alpha a$ and $bc\alpha b$) in $\mathcal{\mathcal{F}}_{3}$.
Since the 6-patch of $T(\alpha)$ is not a face in $G$, there exists at least one vertex within it.
If $F$ is a face $F$ such that $\partial(F)$ contains the boundary walk $bc\alpha ad$ or $ad\alpha bc$.
By Lemma \ref{seven}, $F$ can only be a face in $\mathcal{D}_{6}$ or $\mathcal{E}_{6}$, 
as shown in  Figure \ref{facein2-con} $(8)$ and $(9)$. Then (ii) holds.

Prove (iii) and (iv).

Let $F$ be a face in $\mathcal{F}_{5}$, as shown in Figure \ref{facein2-con} (5).
By $(i)$ and the fact that $\partial(F)$ contains exactly one crossing $\alpha$, $F$ is incident with exactly one bad tie.
Similarly, assume $F$ is a face in $\mathcal{B}_{6}$, as shown in Figure \ref{facein2-con} $(7)$.
Since $\partial(F)$ contains exactly one crossing (counted twice), 
combing with (i), $F$ is incident with exactly one bad tie.
If $F$ is a face in $\mathcal{A}_{6}$, as shown in Figure \ref{facein2-con} $(6)$.
By (i) and the fact that $\partial(F)$ contains exactly two crossings, each with end-vertices of distinct colors,
$F$ is incident with exactly two bad ties. Thus (iii) holds.

Let $F$ be a face in $\mathcal{A}_{4}\cup \mathcal{B}_{4}\cup \mathcal{D}_{6}\cup \mathcal{E}_{6}$.
Note $\partial(F)$ contains exactly one crossing $\alpha$ with end-vertices share the same color, 
then $F$ is incident with exactly one clean tie.
Thus (iv) holds.
\end{proof}

\begin{figure}
\centering
\begin{tikzpicture}[scale=0.75]
	\begin{pgfonlayer}{nodelayer}
		\node [style=none] (11) at (11, 1.25) {};
		\node [style=blacknode] (22) at (5.025, 3.5) {};
		\node [style=whitenode] (23) at (9.875, 3.5) {};
		\node [style=whitenode] (24) at (9.875, -1) {};
		\node [style=blacknode] (25) at (5.025, -1) {};

		\node [style=none] (31) at (6.8, 1.25) {$\alpha_{1}$};
		\node [style=whitenode] (32) at (7.025, 0.325) {};
		\node [style=blacknode] (33) at (7.875, 0.325) {};
		\node [style=whitenode] (34) at (7.025, -0.3) {};
		\node [style=blacknode] (35) at (7.875, -0.3) {};
		\node [style=whitenode] (36) at (7, 3) {};
		\node [style=blacknode] (37) at (7.9, 3) {};
		\node [style=whitenode] (38) at (7, 2.125) {};
		\node [style=blacknode] (39) at (7.9, 2.125) {};
		\node [style=none] (40) at (7.95, 2.55) {$\alpha_{2}$};
		\node [style=none] (41) at (7.425, 0.475) {$\alpha_{3}$};
	\end{pgfonlayer}
	\begin{pgfonlayer}{edgelayer}
		\draw [style={blackedge_thick}] (23) to (25);
		\draw [style={blackedge_thick}] (25) to (24);
		\draw [style={blackedge_thick}] (22) to (24);
		\draw [style={blackedge_thick}] (22) to (23);
		\draw [style={blackedge_thick}] (32) to (35);
		\draw [style={blackedge_thick}] (33) to (34);
		\draw [style={blackedge_thick}] (34) to (35);
		\draw [style={blackedge_thick}] (32) to (25);
		\draw [style={blackedge_thick}] (33) to (24);
		\draw [style={blackedge_thick}, in=315, out=-135, looseness=4.75] (32) to (33);
		\draw [style={blackedge_thick}] (36) to (37);
		\draw [style={blackedge_thick}] (37) to (38);
		\draw [style={blackedge_thick}] (38) to (39);
		\draw [style={blackedge_thick}] (36) to (39);
		\draw [style={blackedge_thick}] (36) to (22);
		\draw [style={blackedge_thick}] (38) to (22);
		\draw [style={blackedge_thick}] (37) to (23);
		\draw [style={blackedge_thick}] (39) to (23);
	\end{pgfonlayer}
\end{tikzpicture}
  \caption{ A graph $G$ with three ties}
  \label{tie3}
\end{figure}
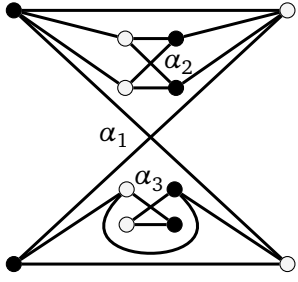

\subsection{Structural properties on ties, faces, and $G_{p}$ of $G$ with $\kappa(G) \ge 3$ }

\noindent As the connectivity of $G$ increases, the following lemma shows that stronger constraints are enforced  on structures (such as  ties and faces) of $G$.
\begin{proposition}\label{3-connected property}
If $\kappa(G)\ge 3$,  then the following statements hold.

\begin{itemize}
  \item [(i)] each tie in $G$ is clean;
  \item [(ii)] any  face in $G$ only has four possibilities, as shown in Figure \ref{facein2-con} $(1)$-$(4)$;
  \item [(iii)] each pair of crossing edges is contained in a region bounded by a clean $6$-cycle in $G$; and
  \item [(iv)]$G_{p}$ is a quadrangulation.
\end{itemize}

\end{proposition}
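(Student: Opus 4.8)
The plan is to prove the four statements in order, leaning on the face classification of Proposition~\ref{seven} and the tie--face incidences of Proposition~\ref{tie and face}, and to invoke the hypothesis $\kappa(G)\ge 3$ (hence $\delta(G)\ge 3$) precisely at the two places where a configuration still compatible with mere $2$-connectivity must be ruled out.

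For (i), I would show that no $3$-patch of a tie can fail to be a face. Fix a tie $T(\alpha)$ with crossing edges $ab,cd$ and, by Lemma~\ref{cleantie}, clean edges $ad,bc$, and consider the $3$-patch $R_1$ bounded by $ad$ and the two segments $a\alpha,d\alpha$. Its whole boundary is uncrossable: $ad$ is clean, while $a\alpha,d\alpha$ are parts of $ab,cd$, which by $1$-planarity are crossed only at $\alpha$. So if $R_1$ were not a face, any vertex in its interior could reach $b,c$ and the rest of $G$ only through $a$ or $d$, making $\{a,d\}$ a $2$-cut and contradicting $\kappa(G)\ge3$. The same argument applies to $R_2$, so both $3$-patches are faces and $T(\alpha)$ is clean.

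For (ii), statement (i) eliminates all bad ties; by Proposition~\ref{tie and face}(iii) the faces of $\mathcal{F}_5\cup\mathcal{B}_6$ and of $\mathcal{A}_6$ are each incident with bad ties, so configurations (5)--(7) of Figure~\ref{facein2-con} cannot occur. The genuinely new point is excluding (8) and (9), which remain compatible with clean ties. Here I would examine a face $F\in\mathcal{D}_6$ (config.~(8)) together with its crossing $c_0$, whose two near-endpoints on $\partial(F)$ are white, say $u,u'$; by (i) the two $3$-patches of $T(c_0)$ are faces. Tracing the faces around $u$, I expect to find that $u$ meets only two faces, namely $F$ and the $3$-patch on the opposite side of the clean tie-edge at $u$, and that both use the same two edges at $u$ (that clean edge and the segment $u c_0$); this forces $\deg_G(u)=2$, contradicting $\delta(G)\ge\kappa(G)\ge3$. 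Configuration (9) is symmetric with a black vertex. I expect this degree-two argument to be the main obstacle, since it requires carefully verifying that no further edge emanates from $u$; granting it, only (1)--(4) survive.

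For (iii), given (ii), each crossing $\alpha$ of a clean tie $T(\alpha)$ is surrounded by exactly four faces: its two $3$-patches $R_1,R_2$, and on the two remaining corners two false $4$-faces, one in $\mathcal{A}_4$ and one in $\mathcal{B}_4$ (the type being forced by the colors of the near-endpoints of $\alpha$). Reading off the outer boundary of the union of these four faces, with crossing edges $ab,cd$, clean edges $ad,bc$, and the middle vertices $z$ (black) and $w$ (white) supplied by the two $4$-faces, I get the cycle $a\,d\,z\,b\,c\,w$; all six of its edges are clean and $\alpha$ lies inside, yielding the required clean $6$-cycle. Distinctness of the six vertices follows from simplicity together with the fact that the bounding edges are clean, so none can coincide with a crossing edge. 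Finally, for (iv) I would construct $G_p$ by deleting one crossing edge per tie and track the face merges: each $3$-face is a $3$-patch of a unique crossing and each false $4$-face is a corner of a unique crossing, and by (ii) every crossing contributes exactly two $3$-patches and two false $4$-face corners. Deleting one of the two crossing edges at $\alpha$ erases two of its four arms, merging each $3$-patch with the adjacent false $4$-face across the erased arm; a short boundary trace shows each merge produces a bipartite $4$-cycle (e.g.\ $R_1\cup F$ becomes $a\,d\,z\,b$), while the true $4$-faces of $\mathcal{D}_4$ are untouched. Since $G_p\subseteq G$ is simple and plane, remains connected (the endpoints of each deleted edge stay joined along the clean $6$-cycle of (iii)), and now has all faces of size $4$, it is a quadrangulation; the only real bookkeeping is checking that the merges pair the $3$-faces with the false $4$-faces bijectively and leave no face of size other than $4$.
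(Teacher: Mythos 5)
Your proposal is correct and follows essentially the same route as the paper: (i) via the $2$-cut $\{a,d\}$, (ii) by eliminating configurations (5)--(7) through the absence of bad ties and then ruling out (8)--(9) using $3$-connectivity, and (iii)--(iv) by identifying the two remaining faces at each crossing as one face of $\mathcal{A}_4$ and one of $\mathcal{B}_4$, whose union with the two $3$-patches yields the clean $6$-cycle and hence the quadrangulation. The only local divergence is in excluding configurations (8)--(9): the paper observes that the two far end-vertices of the crossing form a $2$-cut separating the extra vertex $x$, whereas you derive $\deg_G(u)=2$ at a near end-vertex $u$ of the crossing (since the $3$-patch and the $6$-face occupy both corners between the same two edges at $u$); both are valid applications of $\kappa(G)\ge 3$.
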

\begin{proof}
(i). Assume in $G$, edges $ab$ and $cd$ cross at point $\alpha$, where $\{a,c\}\in X$ and $\{b,d\}\in Y$.
By Lemma \ref{cleantie}, $\{ad,bc\}\subseteq E(G)$ and they are clean in $G$.
If there exists a vertex within the $3$-patch bounded by $a\alpha da$,
then $\{a,d\}$ forms a $2$-cut of $G$,
which contradicts that $G$ is $3$-connected.
Hence, each tie in $G$ is clean.

(ii). By Proposition  \ref{seven},  there are nine possible faces in $G$.
Since $G$ is $3$-connected, according to (i), each pair of crossing edges in $G$ forms a clean tie.
Thus there are no vertices within any $3$-patch of each tie.
Consequently, it is not difficult to verify that the faces depicted in Figure \ref{facein2-con} $(5)$, $(6)$ and $(7)$ can not exist in $G$.
Furthermore, the two black vertices in Figure \ref{facein2-con} $(8)$ form a $2$-cut of $G$, contradicts $G$ is $3$-connected.
Similarly, the two white vertices in Figure \ref{facein2-con} $(9)$ form a $2$-cut of $G$.
Therefore, if $G$ is $3$-connected, the faces in $G$ can only be those illustrated in Figure \ref{facein2-con} $(1)$-$(4)$.

(iii). By (i), each pair of crossing edges in $G$ forms a clean tie.
Then each clean tie in $G$ is incident to precisely two false $3$-faces by Proposition \ref{tie and face} $(ii)$.
According to (ii), the faces in $G$ can only be those shown in Figure \ref{facein2-con} (1)-(4).
As $G$ is $3$-connected, we have $n(G)\geq6$.
Thus, the remaining two false faces incident to each tie must be one face in $\mathcal{A}_{4}$ and one face in $\mathcal{B}_{4}$.
Hence, each pair of crossing edges is contained in a $6$-cycle and it is clean by IC-planarity.

(iv). By (ii), every true face in $G$ is a $4$-face as shown in Figure \ref{facein2-con} (4). 
Furthermore, (iii) asserts that each pair of crossing edges is contained in a clean $6$-cycle of $G$.
Thus, by removing one edge arbitrarily from each crossing pair,
the resulting graph $G_{p}$ is a plane graph in which each face is a 4-face, as desired.
\end{proof}

\section{Characterization of MBICP-graphs without crossings}\label{crossing number0}
\noindent In this section, we characterize the MBICP-graphs without crossings.
\begin{theorem}\label{cr=0}
If $G$ is an $n$-vertex MBICP-graph without crossings. 
Then $G\cong K_{1,n-1}$ or $G\cong K_{2,n-2}$, as shown in Figure \ref{K2,n-2}.
\end{theorem}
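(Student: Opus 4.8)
The plan is to read maximality through two complementary ``edge-insertion'' principles and then push $G$ into one of the two named graphs. Colour $X$ black and $Y$ white, and put $p=|X|\le|Y|=q$. Since $G$ has no crossings it is a plane bipartite graph, and I will use maximality in two forms. The \emph{clean-edge form} is exactly Lemma~\ref{lem:face}: any two differently coloured vertices on a common face are already adjacent, since otherwise a clean edge could be drawn inside that face. The \emph{crossing-edge form} is new but elementary: if $F_1\neq F_2$ are faces sharing an edge $e$, and $u\in\partial(F_1)$, $v\in\partial(F_2)$ have different colours and lie on opposite sides of $e$, then $uv\in E(G)$; otherwise the arc from $u$ to $v$ crossing $e$ exactly once would produce a drawing with a single crossing, which is automatically $1$-plane and IC-plane (a lone crossing pair meets no other), and bipartite, contradicting maximality. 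I will verify in passing that such an arc can always be routed with a single crossing, because $u$ and $e$ lie on $\partial(F_1)$ and $v$ and $e$ lie on $\partial(F_2)$.

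I then split on whether $G$ is a tree. If $G$ is a tree, all $n$ vertices lie on the boundary of its unique face, so the clean-edge form forces $K_{p,q}\subseteq G$; comparing sizes gives $pq\le n-1=p+q-1$, i.e.\ $(p-1)(q-1)\le 0$, hence $p=1$ and $G\cong K_{1,n-1}$. If $G$ is not a tree it contains a cycle, and I first argue it is $2$-connected: a cut vertex $v$ would sit on a face $F$ whose boundary meets $v$ more than once (cf.\ Lemma~\ref{2-connected property}), and taking vertices of opposite colours from the two sides of $v$ on $\partial(F)$ gives a non-adjacent differently coloured pair on a common face, contradicting the clean-edge form --- unless a side degenerates to a single pendant edge, which the same device rules out when applied to that pendant's face. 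Once $G$ is $2$-connected, every face is a true face, so Lemma~\ref{lem:facesbound} makes each face a $4$-face; thus $G$ is a quadrangulation with $e(G)=2n-4$.

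Finally I eliminate $\min(p,q)\ge 3$. Assuming $p,q\ge 3$, I choose an edge $a_1b_1$ ($a_1\in X$, $b_1\in Y$) whose two incident faces $F_1=a_1b_1a_2b_2$ and $F_2=a_1b_1a_3b_3$ span six distinct vertices. Applying the crossing-edge form across $a_1b_1$ yields $a_2b_3,\,a_3b_2\in E(G)$, and together with the eight sides of $F_1,F_2$ this realizes a $K_{3,3}$ on $\{a_1,a_2,a_3\}\cup\{b_1,b_2,b_3\}$, contradicting planarity. Hence $\min(p,q)\le 2$; as $G$ is $2$-connected every white vertex has degree at least $2$ and so is joined to both black vertices, giving $G\cong K_{2,n-2}$. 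The main obstacle is precisely the choice of that edge: quadrangulations rich in degree-$2$ vertices (exactly those that assemble $K_{2,q}$) can force the two faces of an edge to share a second edge, collapsing the six vertices and defeating the $K_{3,3}$ move locally. I expect to clear this with a short counting argument from $e(G)=2n-4$ --- if every edge had a degree-$2$ endpoint, the degree-$\ge 3$ vertices would be an independent vertex cover, and the colourwise degree sums then force $\min(p,q)=2$ --- or, equivalently, by peeling degree-$2$ vertices down to a core of minimum degree $\ge 3$ where every edge is ``good.'' Making this degeneracy analysis and the parallel bookkeeping in the cut-vertex elimination fully rigorous is where the genuine care lies.
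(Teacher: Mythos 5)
Your route is sound and, at its core, it is the same maximality move the paper makes: locate an edge whose two incident $4$-faces span six distinct vertices and then insert a new edge across it with a single crossing, which is automatically a good, $1$-planar, IC-planar, bipartite augmentation (your ``crossing-edge form''), contradicting maximality. The paper phrases the endgame as ``planarity forbids both diagonals, so one can be added,'' while you conclude both diagonals must already be present and read off a $K_{3,3}$ inside a plane graph; these are logically the same contradiction. Where you genuinely diverge, and improve on the paper's exposition, is in the reduction: the paper simply asserts for $s\ge 3$ that $G$ is a quadrangulation, whereas you first prove $2$-connectivity (so that Lemma~\ref{lem:facesbound} applies), and you replace the paper's structural dichotomy (``every adjacent pair of $4$-faces shares two edges, hence $G\cong K_{2,n-2}$'') by a degree count. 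Both of your flagged gaps do close as sketched. For the cut-vertex step: pick two edges $vu,vw$ consecutive in the rotation at $v$ with $u,w$ in different components of $G-v$; if either component has at least two vertices, the boundary walk of the corner face continues past $u$ (or $w$) to a vertex of the opposite colour in that component, giving a non-adjacent bichromatic pair on one face and contradicting Lemma~\ref{lem:face}; the only escape is that every component of $G-v$ is a pendant singleton, i.e.\ $G\cong K_{1,n-1}$. For the degenerate edge choice: if every edge has a degree-$2$ endpoint then the degree-$2$ vertices form a vertex cover (you wrote ``the degree-$\ge 3$ vertices would be an independent vertex cover'' --- they are an independent set, not a cover), so $2n-4=e(G)\le 2|V_2|$ gives at most two vertices of degree $\ge 3$ of total degree $2n-4$, which is impossible when both parts have size at least $3$ since each degree is then at most $n-3$. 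So the argument terminates exactly as you predict, and the tree case via $pq\le p+q-1$ is a clean replacement for the paper's separate treatment of $s=1$.
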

\begin{proof}
Let $s$ and $t$ denote the sizes of the bipartition sets of $G$, where $s\leq t$.
For $s=1$, the star graph $K_{1,n-1}$ trivially constitutes a class of MBICP-graphs without crossings,
as its good drawing is unique, in which no edges crossed each other, as illustrated in Figure \ref{K2,n-2} $(1)$.

For $s=2$, the complete bipartite graph $K_{2,n-2}$ admits a planar drawing as illustrated in Figure \ref{K2,n-2} $(2)$.
Observe that no edges from the complement of $G$ can be added without violating the assumption of $G$.
Thus, the graph $K_{2,n-2}$ shown in Figure \ref{K2,n-2} $(2)$ is a MBICP- graph without crossings.

Notably, any two adjacent $4$-faces in Figure \ref{K2,n-2} $(2)$ share exactly two common edges.

Now we consider $s\geq3$. Clearly, $n\geq6$. 
By the assumption of $G$, $G$ is a quadrangulation.
We are now going to prove the following claim.
\begin{figure}[h!]
  \centering
\begin{tikzpicture}[scale=0.6]
	\begin{pgfonlayer}{nodelayer}
		\node [style=whitenode] (0) at (0, 2) {};
		\node [style=whitenode] (1) at (0, -2) {};
		\node [style=blacknode] (2) at (-4, 0) {};
		\node [style=blacknode] (3) at (-2.5, 0) {};
		\node [style=blacknode] (4) at (-1, 0) {};
		\node [style=blacknode] (5) at (0.5, 0) {};
		\node [style=blacknode] (6) at (2, 0) {};
		\node [style=blacknode] (7) at (3.5, 0) {};
		\node [style=blacknode] (8) at (6, 0) {};
		\node [style=blacknode_v1] (9) at (4.25, 0) {};
		\node [style=blacknode_v1] (10) at (4.75, 0) {};
		\node [style=blacknode_v1] (11) at (5.25, 0) {};
        \node [style=whitenode] (12) at (-9, 0) {};
		\node [style=blacknode] (13) at (-9, 2) {};
		\node [style=blacknode] (14) at (-7.25, 1) {};
		\node [style=blacknode] (15) at (-9, -2) {};
		\node [style=blacknode] (16) at (-10.75, -1) {};
		\node [style=blacknode] (17) at (-7.25, -1.25) {};
		\node [style=blacknode] (18) at (-10.75, 1) {};
		\node [style=blacknode_v1] (19) at (-10.775, 0.45) {};
		\node [style=blacknode_v1] (20) at (-10.85, 0) {};
		\node [style=blacknode_v1] (21) at (-10.8, -0.375) {};
		\node [style=none] (22) at (-9, -3) {$(1)$ $K_{1,n-1}$};
		\node [style=none] (23) at (0, -3) {$(2)$ $K_{2,n-2}$};
	\end{pgfonlayer}
	\begin{pgfonlayer}{edgelayer}
		\draw [style=blackedge_thick] (0.center) to (2);
		\draw [style=blackedge_thick] (0.center) to (3);
		\draw [style=blackedge_thick] (0.center) to (4);
		\draw [style=blackedge_thick] (0.center) to (5);
		\draw [style=blackedge_thick] (0.center) to (6);
		\draw [style=blackedge_thick] (0.center) to (7);
		\draw [style=blackedge_thick] (0.center) to (8);
		\draw [style=blackedge_thick] (2) to (1.center);
		\draw [style=blackedge_thick] (3) to (1.center);
		\draw [style=blackedge_thick] (4) to (1.center);
		\draw [style=blackedge_thick] (5) to (1.center);
		\draw [style=blackedge_thick] (6) to (1.center);
		\draw [style=blackedge_thick] (7) to (1.center);
		\draw [style=blackedge_thick] (8) to (1.center);
		\draw [style=blackedge_thick] (12.center) to (13);
		\draw [style=blackedge_thick] (12.center) to (14);
		\draw [style=blackedge_thick] (12.center) to (15);
		\draw [style=blackedge_thick] (12.center) to (16);
		\draw [style=blackedge_thick] (12.center) to (17);
		\draw [style=blackedge_thick] (18) to (12.center);
	\end{pgfonlayer}
\end{tikzpicture}
  \caption{MBICP-graphs without crossings}
  \label{K2,n-2}
\end{figure}
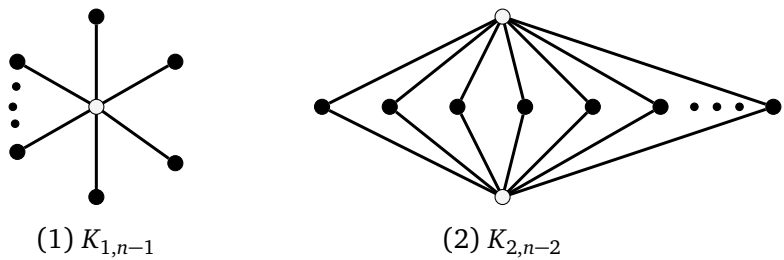
\setcounter{claim}{0}
\renewcommand{\thecase}{\arabic{claim}}
\begin{claim}\label{one common}
There exist two 4-faces in $G$ whose boundaries share exactly one common edge.
\end{claim}
\begin{proof}
It is clear that no two  4-faces in $G$ whose boundaries share four common edges, 
since this would imply $n=4$ and $G\cong K_{2,2}$, a contradiction.
If $F_{1}$ and $F_{2}$ are two 4-faces in $G$ whose boundaries share three common edges,
then $G$ would have parallel edges, contradicting the simplicity of $G$.
Thus, any two adjacent $4$-faces in $G$ share at most two common edges.
Suppose that any two adjacent 4-faces share exactly two common edges.
Then these two common edges must be adjacent in $G$,
otherwise their boundaries would have four vertices in common, which is impossible.
Thus, in this case, $G\cong K_{2,n-2}$, as shown in Figure \ref{K2,n-2} $(2)$, a contradiction to $s\geq3$.
Therefore, the claim holds.
\end{proof}
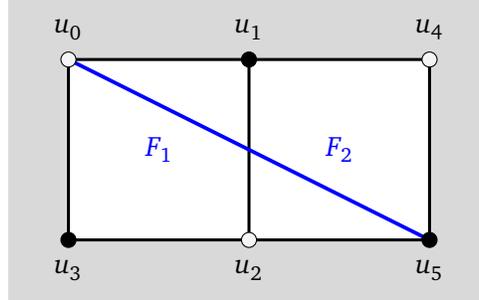
\begin{figure}[h!]
  \centering
  \begin{tikzpicture}[scale=0.8, pattern1/.style={ pattern color=black!60, pattern=north east lines}]
	\begin{pgfonlayer}{nodelayer}
		\node [style=whitenode] (0) at (-3, 3) {};
		\node [style=blacknode] (1) at (-3, 0) {};
		\node [style=blacknode] (2) at (0, 3) {};
		\node [style=whitenode] (3) at (0, 0) {};
		\node [style=whitenode] (4) at (3, 3) {};
		\node [style=blacknode] (5) at (3, 0) {};
		\node [style=none] (6) at (-4, 4) {};
		\node [style=none] (7) at (-4, -1) {};
		\node [style=none] (8) at (4, -1) {};
		\node [style=none] (9) at (4, 4) {};
		\node [style=none] (10) at (-1.5, 1.5) {\color{blue}$F_{1}$};
		\node [style=none] (11) at (1.5, 1.5) {\color{blue}$F_{2}$};

		\node [style=none] (12) at (-3, 3.5) {$u_{0}$};
		\node [style=none] (13) at (0, 3.5) {$u_{1}$};
		\node [style=none] (14) at (3, 3.5) {$u_{4}$};
		\node [style=none] (15) at (-3, -0.5) {$u_{3}$};
		\node [style=none] (16) at (0, -0.5) {$u_{2}$};
		\node [style=none] (17) at (3, -0.5) {$u_{5}$};
	\end{pgfonlayer}
	\begin{pgfonlayer}{edgelayer}
\fill[gray!30,even odd rule] (6.center) to (9.center) to (8.center) to (7.center) to (6.center) (0.center) to (2.center) to (4.center) to (5.center)to (3.center) to (1.center);
		\draw [style=blackedge_thick] (0.center) to (2.center);
		\draw [style=blackedge_thick] (2.center) to (4.center);
		\draw [style=blackedge_thick] (4.center) to (5.center);
		\draw [style=blackedge_thick] (2.center) to (3.center);
		\draw [style=blackedge_thick] (3.center) to (5.center);
		\draw [style=blackedge_thick] (0.center) to (1.center);
		\draw [style=blackedge_thick] (1.center) to (3.center);
		\draw [style=blue_thick] (0.center) to (5.center);

	\end{pgfonlayer}
\end{tikzpicture}
  \caption{Two adjacent 4-faces $F_{1}$ and $F_{2}$ in $G$ whose boundaries share exactly one common edge }
  \label{twofouradjaentface}
\end{figure}

By Claim \ref{one common}, let $F_{1}=u_{0}u_{1}u_{2}u_{3}u_{0}$ and $F_{2}=u_{1}u_{4}u_{5}u_{2}u_{1}$ 
be two adjacent 4-faces in $G$ whose boundaries share exactly one common edge $u_{1}u_{2}$,
where $\{u_{1},u_{3},u_{5}\}\in X$  and $\{u_{0},u_{2},u_{4}\}\in Y$.
Since $G$ has no crossings, edges $u_{0}u_{5}$ and $u_{3}u_{4}$ can not exist in $G$ simultaneously.
Without loss of generality, assume that $u_{3}u_{4}\in E(G)$ and $u_{0}u_{5}\notin E(G)$.
Then a new edge $u_{0}u_{5}$ (colored in blue) can be added within the two faces $F_{1}$ and $F_{2}$, as shown in Figure \ref{twofouradjaentface}.
In this case, $u_{0}u_{5}$ crosses $u_{1}u_{2}$.
Note that the resulting graph is still bipartite IC-plane, which
contradicts the maximality of $G$.
Then the theorem holds.
\end{proof}

Combining Theorem~\ref{cr=0} with the facts that $\kappa(K_{1,n-1}) = 1$ and $\kappa(K_{2,n-2}) = 2$ where $n\ge 2$, the following result follows immediately.

\begin{corollary}\label{3-con cr=0}
Let $G$ be a 3-connected MBICP-graph. Then $G$ has at least one crossing.
\end{corollary}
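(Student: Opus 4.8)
The plan is to argue by contradiction, invoking Theorem~\ref{cr=0} as the workhorse. Suppose, for contradiction, that $G$ is a $3$-connected MBICP-graph that has no crossings. Since $G$ is a crossing-free MBICP-graph on $n$ vertices, Theorem~\ref{cr=0} immediately forces $G \cong K_{1,n-1}$ or $G \cong K_{2,n-2}$. The entire argument then reduces to checking that neither of these two graph families can be $3$-connected.

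The key step is the connectivity computation. For the star $K_{1,n-1}$, deleting the single center vertex disconnects the graph (leaving $n-1$ isolated vertices), so $\kappa(K_{1,n-1}) = 1$. For $K_{2,n-2}$, the two vertices in the size-$2$ part form a separating set: once they are removed, the remaining $n-2$ vertices of the other part are pairwise nonadjacent, so $\kappa(K_{2,n-2}) \le 2$, and in fact equals $2$ for $n \ge 4$. In either case $\kappa(G) \le 2$, which directly contradicts the hypothesis $\kappa(G) \ge 3$. Hence the assumption that $G$ has no crossings is untenable, and $G$ must contain at least one crossing.

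There is essentially no hard part here: all the structural difficulty is absorbed into Theorem~\ref{cr=0}, whose proof already enumerated the crossing-free MBICP-graphs. The only point worth stating carefully is that the connectivity bounds $\kappa(K_{1,n-1})=1$ and $\kappa(K_{2,n-2})=2$ are valid in the relevant range $n \ge 2$ (indeed $n \ge 6$ is the regime where $K_{2,n-2}$ is genuinely distinct), so no degenerate small cases slip through. This matches the remark preceding the corollary that the result \emph{follows immediately}, so the write-up can be a short contradiction argument rather than a multi-step derivation.
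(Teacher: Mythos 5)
Your proof is correct and follows exactly the paper's route: the paper derives the corollary by combining Theorem~\ref{cr=0} with the observations $\kappa(K_{1,n-1})=1$ and $\kappa(K_{2,n-2})=2$, which is precisely your contradiction argument spelled out. No issues.
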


\section{ Proofs of Theorems \ref{2-connected} and \ref{3-connected}}\label{proofmaintheorem}

\noindent The whole section contributes to the proofs of Theorem \ref{2-connected} and Theorem \ref{3-connected}.

\subsection{Proof of Theorem \ref{2-connected}}
\begin{proof}

We will prove this theorem by induction on $n$.
Since $\kappa(G)\ge 2$, it follows that $n\geq4$.
If $n=4$, then $G\cong K_{2,2}$. In this case, $e(G)=4=\frac{3}{2}\times 4-2$. Thus the theorem holds for $n=4$.
Similarly, if $n=5$, then $G\cong K_{2,3}$, and  $e(G)=6 > \frac{3}{2}\times 5-2$. The theorem holds for $n=5$.

\setcounter{case}{0}
\renewcommand{\thecase}{\arabic{case}}
Now assume that the theorem holds for all $2$-connected MBICP-graphs of order less than $n$, where $n\geq6$.
Let $G$ be a $2$-connected MBICP-graph of order $n$.
Let $D$ be the corresponding maximal bipartite IC-planar drawing of $G$.

If $cr(D)=0$,  by Theorem \ref{cr=0}, we have $G\cong K_{2,n-2}$.
Then $e(G)=2n-4 \geq \frac{3}{2}n-2$ for $n\geq6$.
The theorem holds in this case. Thus we now proceed to consider $cr(D)\geq1$.

\begin{case}\label{case1}
For each crossing $\alpha$, the tie $T(\alpha)$ is clean  in $D$.
\end{case}
\begin{proof}
By Proposition \ref{seven}, there are nine types of possible faces in $D$, as depicted in Figure \ref{facein2-con}.
By the assumption, for each crossing $\alpha$, $T(\alpha)$ is  clean  in $D$.
This implies there are no vertices within any $3$-patch of each tie in $D$. Consequently, the faces of $D$ fall into six possible types, illustrated in Figure \ref{facein2-con} (1)–(4), (8) and (9),  where the 6-faces can only be of types Figure \ref{facein2-con} (8) or (9).

Assume that $D$ has no $6$-faces in $\mathcal{D}_{6}\cup \mathcal{E}_{6}$ as shown in Figure \ref{facein2-con} $(8)$ and $(9)$.
Then in $D$, each true face is a $4$-face and each pair of crossing edges is contained in a clean $6$-cycle. 
By deleting one edge from each pair of crossing edges in \( D \), we obtain a new graph \( D_p \).
Then $D_{p}$ is a quadrangulation. 
Thus, $e(D_{p})=2n(D_{p})=2n-4$. Furthermore, $cr(D)\geq1$, we have $e(G)$=$e(D_{p})+cr(D)\geq 2n-3 \geq \frac{3}{2}n-2$ for $n\geq2$.
Hence, the result holds.
\begin{figure}[h!]
  \centering
\begin{tikzpicture}[scale=0.6]
	\begin{pgfonlayer}{nodelayer}
		\node [style=whitenode] (0) at (-3.75, 6.75) {};
		\node [style=blacknode] (1) at (-5.25, 5.5) {};
		\node [style=blacknode] (2) at (-2.25, 5.5) {};
		\node [style=whitenode] (3) at (-4.75, 4) {};
		\node [style=whitenode] (4) at (-2.75, 4) {};
		\node [style=whitenode] (5) at (3.5, 7.25) {};
		\node [style=blacknode] (6) at (2, 6) {};
		\node [style=blacknode] (7) at (5, 6) {};
		\node [style=whitenode] (8) at (2.5, 4.5) {};
		\node [style=whitenode] (9) at (4.5, 4.5) {};
		\node [style=whitenode] (10) at (3.5, 2.5) {};
		\node [style=none] (11) at (-5.5, 5.75) {$c$};
		\node [style=none] (12) at (-1.8, 5.75) {$d$};
		\node [style=none] (13) at (-5, 3.75) {$a$};
		\node [style=none] (14) at (-2.5, 3.75) {$b$};
		\node [style=none] (15) at (-3.75, 3.95) {$\alpha$};
		\node [style=none] (16) at (-3.75, 7.25) {$x$};
		\node [style=none] (17) at (1.5, 6.25) {$c$};
		\node [style=none] (18) at (5.5, 6.25) {$d$};
		\node [style=none] (19) at (2.25, 4.25) {$a$};
		\node [style=none] (20) at (4.75, 4.25) {$b$};
		\node [style=none] (21) at (3.5, 4.35) {$\alpha$};
		\node [style=none] (22) at (3.5, 7.75) {$x$};
		\node [style=none] (23) at (3.5, 2) {$y$};
		\node [style=none] (24) at (-3.75, 5.25) {\color{blue}$F$};
		\node [style=none] (25) at (3.5, 5.75) {\color{blue}$F$};
		\node [style=none] (26) at (-3.75, 1) {$(1)$};
		\node [style=none] (27) at (3.5, 1) {$(2)$};
		\node [style=none] (28) at (3.5, 3.25) {\color{blue}$F'$};
	\end{pgfonlayer}
	\begin{pgfonlayer}{edgelayer}
		\draw [style={blackedge_thick}] (0) to (1);
		\draw [style={blackedge_thick}] (0) to (2);
		\draw [style={blackedge_thick}] (1) to (3);
		\draw [style={blackedge_thick}] (2) to (4);
		\draw [style={blackedge_thick}, in=-150, out=-105, looseness=2.50] (1) to (4);
		\draw [style={blackedge_thick}, in=285, out=-30, looseness=2.50] (3) to (2);
		\draw [style={black_bold}] (5) to (6);
		\draw [style={black_bold}] (5) to (7);
		\draw [style={blackedge_thick}] (6) to (8);
		\draw [style={blackedge_thick}] (7) to (9);
		\draw [style={blackedge_thick}, in=-150, out=-105, looseness=2.50] (6) to (9);
		\draw [style={blackedge_thick}, in=285, out=-30, looseness=2.50] (8) to (7);
		\draw [style={black_bold}, bend right=60, looseness=1.75] (6) to (10);
		\draw [style={black_bold}, bend left=60, looseness=1.75] (7) to (10);
	\end{pgfonlayer}
\end{tikzpicture}

  \caption{A false $6$-face that is contained in a true $4$-cycle of $D$}
  \label{4-cycle}
\end{figure}
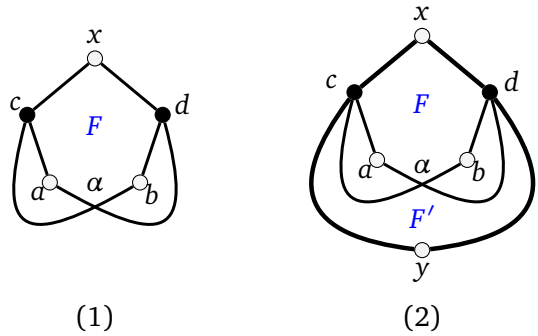

We now consider $D$ has at least one  $6$-face in $\mathcal{D}_{6}\cup \mathcal{E}_{6}$ as depicted in \ref{facein2-con} $(8)$ or $(9)$, 
\setcounter{claim}{0}
\renewcommand{\thecase}{\arabic{claim}}
\begin{claim}
Each $6$-face in $\mathcal{D}_{6}\cup \mathcal{E}_{6}$ is contained within a region bounded by a clean $4$-cycle in $D$.
\end{claim}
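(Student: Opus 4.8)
The plan is to exploit the fact that, in Case~\ref{case1}, the crossing carried by $F$ belongs to a clean tie, and to build the desired $4$-cycle from the two same-coloured ends of that tie together with two ``apex'' vertices, one coming from $F$ and one from the face on the opposite side of the crossing. By interchanging the two colours the case $F\in\mathcal{E}_{6}$ is symmetric to $F\in\mathcal{D}_{6}$, so I would only treat $F\in\mathcal{D}_{6}$; thus $\partial(F)$ carries three white vertices, two black vertices, and one crossing $\alpha$.

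First I would pin down the boundary of $F$. Since $|F|=6\ge 5$, Lemma~\ref{configuration} forces all four vertices of $N_{G}(\alpha)$ onto $\partial(F)$; because $\partial(F)$ contains exactly two black vertices, these two must be the black ends $b_{1},b_{2}$ of the tie $T(\alpha)$, while the two white ends of $T(\alpha)$ together with one further white apex $w$ supply the three white vertices. As we are in Case~\ref{case1}, $T(\alpha)$ is clean, so Proposition~\ref{tie and face}(ii) says it is incident to its two false $3$-faces and to at most one face of $\mathcal{D}_{6}\cup\mathcal{E}_{6}$, namely $F$ itself. Hence the face occupying the opposite ($6$-patch) corner of $\alpha$ must be a $\mathcal{B}_{4}$-face $F''$, whose boundary consists of $b_{1},b_{2},\alpha$ and a second white apex $w'$ adjacent to both $b_{1}$ and $b_{2}$.

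With these vertices in hand I would set $C=w\,b_{1}\,w'\,b_{2}$ and check the two required properties. Cleanness is immediate from IC-planarity: every edge of $C$ is incident to $b_{1}$ or $b_{2}$, and since both $b_{1}$ and $b_{2}$ are already ends of the crossing edges at $\alpha$, no edge at $b_{1}$ or $b_{2}$ can be crossed. To see that $C$ encloses $F$, I would note that the four faces $F$, $F''$ and the two $3$-patches of $T(\alpha)$ tile a closed disk whose interior consists exactly of the two clean edges and four edge-segments of $T(\alpha)$; reading off the outer boundary of this disk yields precisely the closed walk $w\,b_{1}\,w'\,b_{2}$, so $F$ lies in the interior of $C$.

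The one genuine obstacle is non-degeneracy: the construction collapses if $w=w'$, in which case $C$ degenerates to the path $b_{1}\,w\,b_{2}$. I would exclude this using the standing hypothesis $n\ge 6$ of the induction step. If $w=w'$, then $w$ has degree $2$ with neighbours $b_{1},b_{2}$, and examining the faces incident to each white end of $T(\alpha)$ shows these ends also have degree $2$; consequently $\{b_{1},b_{2}\}$ together with the three whites induces a copy of $K_{2,3}$ joined to the rest of $G$ only across the $2$-cut $\{b_{1},b_{2}\}$. Any further vertex of $G$ would then be drawn inside a face meeting both $b_{1}$ and $b_{2}$, necessarily $F$ or $F''$, thereby subdividing it and contradicting $F\in\mathcal{D}_{6}$ (respectively $F''\in\mathcal{B}_{4}$); this leaves only $n=5$, which is excluded. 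Hence $w\ne w'$, and $C$ is a genuine clean $4$-cycle containing $F$, as claimed.
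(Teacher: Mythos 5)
Your proof is correct and takes essentially the same route as the paper's: both pin down the four tie vertices on $\partial(F)$ via Lemma~\ref{configuration}, use the cleanness of $T(\alpha)$ to identify the face at the opposite corner of the crossing as a $\mathcal{B}_{4}$-face with a second white apex, exclude the degenerate coincidence of the two apexes via $n\ge 6$, and observe that the resulting $4$-cycle through the two black tie-ends and the two apexes is clean by IC-planarity and encloses $F$. The only differences are presentational: you invoke Proposition~\ref{tie and face}(ii) explicitly to rule out an $\mathcal{E}_{6}$-face at the opposite corner (a step the paper leaves implicit) and spell out the degenerate case at greater length.
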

\begin{proof}
Let $F=a\alpha bdxca$ be a false $6$-face in $\mathcal{D}_{6}$, as illustrated in Figure \ref{4-cycle} $(1)$,
where $\alpha$ is the crossing on $\partial(F)$ and $N_{D}(\alpha)=\{a,b,c,d\}$.
Since each crossing pair forms a clean-tie in $D$, 
the regions bounded by $a\alpha bdxca$, $a\alpha ca$ and $b\alpha db$ are faces in $D$.
Consequently, all vertices in $V(G)\backslash T(F)$ lie in the exterior of the region bounded by $cxd\alpha c$.
Recall that $D$ has six types of possible faces, shown in Figure \ref{facein2-con} $(1)$-$(4)$, $(8)$ and $(9)$,
then edge segments $\alpha c$ and $\alpha d$ are incident to a false $4$-face $F'$ in $\mathcal{B}_{4}$.
If $F'=c\alpha dxc$, then $n=5$, contradicting $n\geq 6$.
Thus there exists a vertex $y$ such that $F'=c\alpha dyc$,
as shown in Figure \ref{4-cycle} $(2)$.
It follows that $\mathcal{L}_{4}=cxdyc$ is the $4$-cycle that contains the face $F$.
Since $c$ and $d$ are incident to the crossing $\alpha$,
the four edges of $\mathcal{L}_{4}$ are clean in $D$.
Therefore,  $\mathcal{L}_{4}$ is a clean $4$-cycle in $D$.
Analogously, it can be shown that each false $6$-face in $\mathcal{E}_{6}$ is contained within a region bounded by a clean $4$-cycle in $D$. 
\end{proof}

If $V(G)\backslash \{a,b,c,d,x,y\}$=$\varnothing$, then $n=6$.
In this case, $G\cong K_{2,4}$. Then $e(K_{2,4})=8 > \frac{3}{2}\times6-2=7$.
The theorem holds.

If $n\geq 7$, then $V(G)\backslash\{a,b,c,d,x,y\}\neq\varnothing$.
There exists at least one vertex in the exterior of $\mathcal{L}_{4}$.
Now we consider two subgraphs $D_{1}$ and $D_{2}$ of $D$, as shown in Figure \ref{subgraphs}.
Here, $D_{1}$ is obtained from $D$ be removing all vertices in the exterior of $\mathcal{L}_{4}$
and $D_{2}$ is obtained from $D$ be removing all vertices within $\mathcal{L}_{4}$.
Assume that $D_{1}$ and $D_{2}$ inherit the drawing of $D$. 
Note that $V(D_{1})\cap V(D_{2})=\{c,d,x,y\}$ and $E(D_{1})\cap E(D_{2})=\{cx,dx,cy,dy\}$.

\begin{claim}
Both \( D_1 \) and \( D_2 \) are 2-connected MBICP-graphs of order at least 4.
\end{claim}
\begin{proof}
Since $\mathcal{L}_{4}\subseteq D_{1}$ and $\mathcal{L}_{4}\subseteq D_{2}$, both $D_{1}$ and $D_{2}$ have order at least $4$.
Note that $D_{1}\cong K_{2,4}$ is $2$-connected and no edge can be added to $D_{1}$.
Assume that $D_{2}$ has a cut-vertex $h$ with $h\in V(\mathcal{L}_{4})$,
then there exist two components $H_{1}$ and $H_{2}$ of $D_{2}-h$ such that $V(\mathcal{L}_{4})\setminus h \subseteq V(H_{1})$.
Let $H_{1}'=D[V(H_{1})\cup \{a,b\}]$, then $H_{1}'$ and $H_{2}$ are two components of $D-h$, a contradiction to $D$ is $2$-connected.
Similarly, if $h\in V(D_{2})\setminus V(\mathcal{L}_{4})$, then $h$ would be a cut-vertex of $D$, a contradiction, thus $D_{2}$ is $2$-connected.
If an edge $e_{0}$ from the complement of $D$ can be added to $D_{2}$,
by $1$-planarity, $e_{0}$ can not be added through the interior of $\mathcal{L}_{4}$.
Thus, $e_{0}$ can only be added in the exterior of $\mathcal{L}_{4}$, which implies that $e_{0}$ can be added to $D$, a contradiction to the maximality of $D$.
Hence, both $D_{1}$ and $D_{2}$ are $2$-connected MBICP-graphs.
\end{proof}

By the induction hypothesis, we conclude that $e(D_{1})\geq\frac{3}{2}n(D_{1})-2$ and $e(D_{2})\geq\frac{3}{2}n(D_{2})-2$.
Since $n(D_{1})+n(D_{2})=n(D)+4=n(G)+4$ and $e(D_{1})+e(D_{2})=e(D)+4=e(G)+4$,
it follows that: 
\begin{align}
e(G) 
\nonumber
&= e(D_{1})+e(D_{2})-4  \\
\nonumber
&\geq \frac{3}{2} n(D_{1})-2 +\frac{3}{2} n(D_{2})-2 -4 \\
\nonumber
&= \frac{3}{2}(n(G)+4)-4-4 \\
\nonumber
&= \frac{3}{2}n-2.
\end{align}
This completes Case \ref{case1}.
\end{proof}

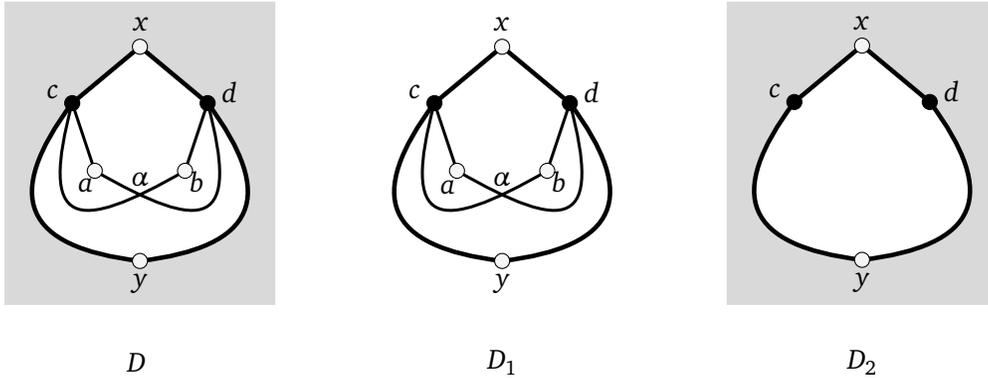
\begin{figure}[h!]
  \centering
  \begin{tikzpicture}[scale=0.6, pattern1/.style={ pattern color=black!60, pattern=north east lines}]
	\begin{pgfonlayer}{nodelayer}

		\node [style=whitenode] (5) at (8, 7.025) {};
		\node [style=blacknode] (6) at (6.5, 5.775) {};
		\node [style=blacknode] (7) at (9.5, 5.775) {};
		\node [style=whitenode] (10) at (8, 2.275) {};
		\node [style=none] (17) at (6.075, 6.025) {$c$};
		\node [style=none] (18) at (10, 6.025) {$d$};
		\node [style=none] (22) at (8, 7.525) {$x$};
		\node [style=none] (23) at (8, 1.775) {$y$};
		\node [style=none] (25) at (8, 5.775) {};
		\node [style=none] (27) at (8, 0) {$D_{2}$};
		\node [style=whitenode] (28) at (-8, 7) {};
		\node [style=blacknode] (29) at (-9.5, 5.75) {};
		\node [style=blacknode] (30) at (-6.5, 5.75) {};
		\node [style=whitenode] (31) at (-9, 4.25) {};
		\node [style=whitenode] (32) at (-7, 4.25) {};
		\node [style=whitenode] (33) at (-8, 2.25) {};
		\node [style=none] (34) at (-9.925, 6) {$c$};
		\node [style=none] (35) at (-6, 6) {$d$};
		\node [style=none] (36) at (-9.2, 3.925) {$a$};
		\node [style=none] (37) at (-6.75, 4) {$b$};
		\node [style=none] (38) at (-8, 4.075) {$\alpha$};
		\node [style=none] (39) at (-7.975, 7.5) {$x$};
		\node [style=none] (40) at (-8, 1.75) {$y$};
		\node [style=none] (41) at (-8, 5.75) {};
		\node [style=none] (42) at (-8.075, 0) {$D$};
		\node [style=none] (43) at (5, 8) {};
		\node [style=none] (44) at (5, 1.275) {};
		\node [style=none] (45) at (11, 1.275) {};
		\node [style=none] (46) at (11, 8) {};
		\node [style=whitenode] (47) at (0.025, 7) {};
		\node [style=blacknode] (48) at (-1.475, 5.75) {};
		\node [style=blacknode] (49) at (1.525, 5.75) {};
		\node [style=whitenode] (50) at (-0.975, 4.25) {};
		\node [style=whitenode] (51) at (1.025, 4.25) {};
		\node [style=whitenode] (52) at (0.025, 2.25) {};
		\node [style=none] (53) at (-1.9, 6) {$c$};
		\node [style=none] (54) at (2.025, 6) {$d$};
		\node [style=none] (55) at (-1.175, 3.9){$a$};
		\node [style=none] (56) at (1.275, 4) {$b$};
		\node [style=none] (57) at (0.025, 4.075) {$\alpha$};
		\node [style=none] (58) at (0.025, 7.5) {$x$};
		\node [style=none] (59) at (0.025, 1.75) {$y$};
		\node [style=none] (60) at (0.025, 5.75) {};
		\node [style=none] (61) at (0.025, 0) {$D_{1}$};
		\node [style=none] (62) at (-11, 8) {};
		\node [style=none] (63) at (-11, 1.275) {};
		\node [style=none] (64) at (-5, 1.275) {};
		\node [style=none] (65) at (-5, 8) {};
		\node [style=none] (66) at (-7.975, 8) {};
		\node [style=none] (67) at (-8, 1.275) {};
	\end{pgfonlayer}
	\begin{pgfonlayer}{edgelayer}
\fill[gray!30, even odd rule] (62.center) to (65.center) to (64.center) to (63.center)to (62.center)  (28.center) to (30.center)[bend left=60, looseness=1.75] to (33.center) [bend left=60, looseness=1.75] to (29.center) ;
\fill[gray!30, even odd rule] (43.center) to (46.center) to (45.center) to (44.center)to (43.center)  (5.center) to (7.center)[bend left=60, looseness=1.75] to (10.center) [bend left=60, looseness=1.75] to (6.center) ;
		\draw [style=black_bold] (5.center) to (6.center);
		\draw [style=black_bold] (5.center) to (7.center);
		\draw [style=black_bold, bend right=60, looseness=1.75] (6.center) to (10.center);
		\draw [style=black_bold, bend left=60, looseness=1.75] (7.center) to (10.center);
		\draw [style=black_bold] (28.center) to (29.center);
		\draw [style=black_bold] (28.center) to (30.center);
		\draw [style=blackedge_thick] (29.center) to (31.center);
		\draw [style=blackedge_thick] (30.center) to (32.center);
		\draw [style=blackedge_thick, in=-150, out=-105, looseness=2.50] (29.center) to (32.center);
		\draw [style=blackedge_thick, in=285, out=-30, looseness=2.50] (31.center) to (30.center);
		\draw [style=black_bold, bend right=60, looseness=1.75] (29.center) to (33.center);
		\draw [style=black_bold, bend left=60, looseness=1.75] (30.center) to (33.center);
		\draw [style=black_bold] (47.center) to (48.center);
		\draw [style=black_bold] (47.center) to (49.center);
		\draw [style=blackedge_thick] (48.center) to (50.center);
		\draw [style=blackedge_thick] (49.center) to (51.center);
		\draw [style=blackedge_thick, in=-150, out=-105, looseness=2.50] (48.center) to (51.center);
		\draw [style=blackedge_thick, in=285, out=-30, looseness=2.50] (50.center) to (49.center);
		\draw [style=black_bold, bend right=60, looseness=1.75] (48.center) to (52.center);
		\draw [style=black_bold, bend left=60, looseness=1.75] (49.center) to (52.center);
	\end{pgfonlayer}
\end{tikzpicture}

  \caption{Graph $D$ and its subgraphs $D_{1}$ and $D_{2}$}
  \label{subgraphs}
\end{figure}

For a crossing $\alpha$ in $D$, let $\mathcal{R}(\alpha)$ denote the set of regions partitioned by $T(\alpha)$,
clearly, $|\mathcal{R}(\alpha)|=3$.
Let $r(\alpha)$ denote the number of regions in $\mathcal{R}(\alpha)$ that are not faces in $D$.
Obviously, $r(\alpha)=1$ if $T(\alpha)$ is a bad tie.

\begin{case}
There exists at least one crossing $\alpha$ such that $T(\alpha)$ is not clean in $D$.
\end{case}

In this case, let $\alpha_{0}$ be a crossing in $D$ such that $T(\alpha_{0})$ is not clean and $r(\alpha_{0})$ has the largest possible value.
Clearly, $r(\alpha_{0})\geq1$.

Since $T(\alpha_{0})$ is not a clean tie, 
at least one $3$-patch of $\mathcal{R}(\alpha_{0})$ is not a false $3$-face of $D$, say $R_{1}$, as shown in Figure \ref{induction}.
Since $D$ is $2$-connected, there are at least two vertices within $R_{1}$.
We now consider two subgraphs $D_{1}$ and $D_{2}$ of $D$, as depicted in Figure \ref{induction}.
Here, $D_{1}$ is the graph obtained from $D$ by removing all vertices within $R_{2}$ and $R_{3}$,
and $D_{2}$ is the graph obtained from $D$ by removing all vertices within $R_{1}$.
Assume that $D_{1}$ and $D_{2}$ inherit the original drawing of $D$.
Note that $D_{1}\cap D_{2}=T(\alpha_{0})$.
\begin{claim}
Both \( D_1 \) and \( D_2 \) are 2-connected MBICP-graphs of order at least 4.
\end{claim}
\begin{proof}
Since $T(\alpha_{0})\subseteq D_{1}$ and $T(\alpha_{0})\subseteq D_{2}$, both $D_{1}$ and $D_{2}$ have order at least 4.
Suppose that $D_{1}$ has a cut-vertex $h$ with $h\in V(T(\alpha_{0})$.
Then there exist two components $H_{1}$ and $H_{2}$ such that $V(T_{\alpha_{0}})\setminus h\subseteq V(H_{1})$.
Let $H_{1}'=D[V(H_{1})\cup V(R_{2})\cup V(R_{3})]$, where $V(R_{2})$ and $V(R_{3})$ denote the set of vertices within $R_{2}$ and $R_{3}$ but not on their bounaries,
then $H_{1}'$ and $H_{2}$ are two components of $G-h$, which contradicts that $D$ is $2$-connected.
Similarly, if $h\in V(D_{1})\setminus V(T_{\alpha_{0}})$, then $h$ is a cut-vertex of $D$, which contradicts that $D$ is $2$-connected. Thus $D_{1}$ is $2$-connected. By a similar argument, $D_{2}$ is $2$-connected.

Now, we prove their maximality.
If an edge $e_{0}$ from the complement of $D$ can be added to $D_{1}$,
then $e_{0}$ is an edge connect a vertex $p$ within $R_{1}$ to a vertex $q$ outside $R_{1}$, where $p$ and $q$ are not vertices on the boundary of $R_{1}$;
otherwise, $e_{0}$ can be added to $D$.
In this case, $e_{0}$ would cross the unique clean edge on the boundary of $R_{1}$, a contraction to Lemma \ref{cleantie}.
Similarly, if an edge $e_{0}'$ from the complement of $D$ can be added to $D_{2}$,
then it must pass through the interior of $R_{1}$.
As a result, $e_{0}'$ is crossed at least twice, a contradiction to $1$-planarity.
Hence $D_{1}$ and $D_{2}$ are $2$-connected MBICP-graphs.
\end{proof}
\begin{subcase}\label{subcase1}
$r(\alpha_{0})\geq2$.
\end{subcase}

In this case, $D_{1}$ and $D_{2}$ are $2$-connected MBICP-graphs with $4 <n(D_{1})<n(D)$ and $4 < n(D_{2})<n(D)$.
By the induction hypothesis, $e(D_{1})\geq \frac{3}{2} n(D_{1})-2$ and $e(D_{2})\geq \frac{3}{2} n(D_{2})-2$.
Combining this with the facts that \[
n(D_{1}) + n(D_{2}) = n(G) + 4 \quad \text{and} \quad e(D_{1}) + e(D_{2}) = e(G) + 4,
\] we have
\begin{align}
e(G) 
\nonumber
&= e(D_{1})+e(D_{2})-4  \\
\nonumber
&\geq \frac{3}{2} v(D_{1})-2 +\frac{3}{2} v(D_{2})-2 -4 \\
\nonumber
&= \frac{3}{2}(n(G)+4)-4-4 \\
\nonumber
&= \frac{3}{2}n-2.
\end{align}
Thus, the conclusion holds for Subcase \ref{subcase1}.

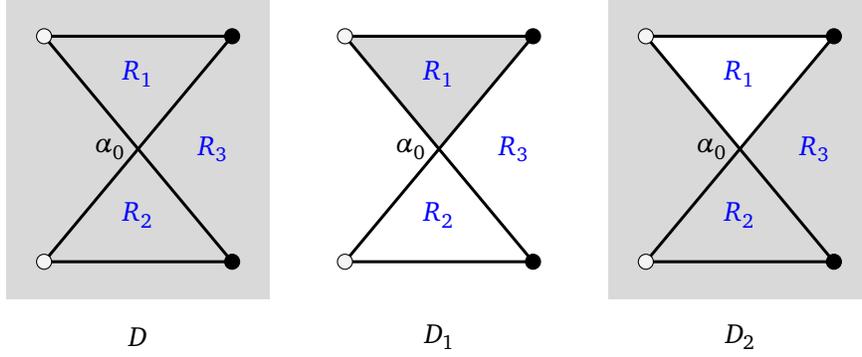
\begin{figure}[h!]
  \centering
\begin{tikzpicture}[scale=0.5, pattern1/.style={ pattern color=black!60, pattern=north east lines}]
	\begin{pgfonlayer}{nodelayer}
		\node [style=none] (0) at (-10.5, 6) {};
		\node [style=blacknode] (1) at (-5.5, 6) {};
		\node [style=whitenode] (2) at (-10.5, 0) {};
		\node [style=blacknode] (3) at (-5.5, 0) {};
		\node [style=whitenode] (4) at (-2.5, 6) {};
		\node [style=blacknode] (5) at (2.5, 6) {};
		\node [style=whitenode] (6) at (-2.5, 0) {};
		\node [style=blacknode] (7) at (2.5, 0) {};
		\node [style=whitenode] (8) at (5.5, 6) {};
		\node [style=blacknode] (9) at (10.5, 6) {};
		\node [style=whitenode] (10) at (5.5, 0) {};
		\node [style=blacknode] (11) at (10.5, 0) {};
		\node [style=none] (12) at (-8, 3) {};
		\node [style=none] (13) at (0, 3) {};
		\node [style=none] (14) at (8, 3) {};
		\node [style=none] (15) at (-8, -2) {$D$};
		\node [style=none] (16) at (0, -2) {$D_{1}$};
		\node [style=none] (17) at (8, -2) {$D_{2}$};
		\node [style=whitenode] (18) at (-10.5, 6) {};
		\node [style=none] (31) at (-8, 5) {\color{blue}$R_{1}$};
		\node [style=none] (32) at (-8, 1.25) {\color{blue}$R_{2}$};
		\node [style=none] (33) at (-6, 3) {\color{blue}$R_{3}$};
		\node [style=none] (34) at (-11.5, 7) {};
		\node [style=none] (35) at (-4.5, 7) {};
		\node [style=none] (36) at (-4.5, -1) {};
		\node [style=none] (37) at (-11.5, -1) {};
		\node [style=none] (42) at (4.5, 7) {};
		\node [style=none] (43) at (4.5, -1) {};
		\node [style=none] (44) at (11.5, -1) {};
		\node [style=none] (45) at (11.5, 7) {};
\node [style=none] (46) at (0, 5) {\color{blue}$R_{1}$};
		\node [style=none] (47) at (0, 1.25) {\color{blue}$R_{2}$};
		\node [style=none] (48) at (2, 3) {\color{blue}$R_{3}$};
		\node [style=none] (49) at (8, 5) {\color{blue}$R_{1}$};
		\node [style=none] (50) at (8, 1.25) {\color{blue}$R_{2}$};
		\node [style=none] (51) at (10, 3) {\color{blue}$R_{3}$};
		\node [style=none] (52) at (-8.75, 3) {$\alpha_{0}$};
		\node [style=none] (53) at (-0.75, 3) {$\alpha_{0}$};
		\node [style=none] (54) at (7.25, 3) {$\alpha_{0}$};
	\end{pgfonlayer}
	\begin{pgfonlayer}{edgelayer}
\fill[gray!30, even odd rule] (34.center) to (35.center) to (36.center) to (37.center) to (34.center);
\fill[gray!30, even odd rule] (4.center) to (5.center) to (13.center) to (4.center);
\fill[gray!30, even odd rule] (42.center) to (45.center) to (44.center) to (43.center) to (42.center)  (8.center) to (9.center) to (14.center) ;
		\draw [style=blackedge_thick] (0.center) to (1.center);
		\draw [style=blackedge_thick] (1.center) to (2.center);
		\draw [style=blackedge_thick] (0.center) to (3.center);
		\draw [style=blackedge_thick] (2.center) to (3.center);
		\draw [style=blackedge_thick] (4.center) to (5.center);
		\draw [style=blackedge_thick] (5.center) to (6.center);
		\draw [style=blackedge_thick] (4.center) to (7.center);
		\draw [style=blackedge_thick] (6.center) to (7.center);
		\draw [style=blackedge_thick] (8.center) to (9.center);
		\draw [style=blackedge_thick] (9.center) to (10.center);
		\draw [style=blackedge_thick] (8.center) to (11.center);
		\draw [style=blackedge_thick] (10.center) to (11.center);
		
	\end{pgfonlayer}
\end{tikzpicture}
  \caption{The graph $D$ and its subgraphs $D_{1}$ and $D_{2}$}
  \label{induction}
\end{figure}

\begin{subcase}\label{subcase2}
$r(\alpha_{0})=1$.
\end{subcase}
By the assumption of $\alpha_{0}$, the following claim holds directly.

\begin{claim}
$r(\alpha)=1$ for each crossing $\alpha$ in $D$ such that $T(\alpha)$ is not clean.
\end{claim}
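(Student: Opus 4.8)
The plan is to read the claim off directly from the extremal choice of $\alpha_{0}$, supplemented by an elementary lower bound on $r(\alpha)$ for non-clean ties. First I would recall exactly how $\alpha_{0}$ was selected: it is a crossing for which $T(\alpha_{0})$ is not clean and for which $r(\alpha_{0})$ attains the largest possible value among all crossings whose tie is not clean. In the present subcase we are assuming $r(\alpha_{0})=1$, so this maximum value equals $1$. Consequently, for every crossing $\alpha$ in $D$ with $T(\alpha)$ not clean, the extremality of $\alpha_{0}$ forces $r(\alpha)\le r(\alpha_{0})=1$.

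Next I would supply the matching lower bound. Recall that $|\mathcal{R}(\alpha)|=3$, the three regions being the two $3$-patches and the $6$-patch of $T(\alpha)$, and that by definition a tie is \emph{clean} precisely when both of its $3$-patches are faces of $D$. Hence, if $T(\alpha)$ is \emph{not} clean, at least one of its $3$-patches fails to be a face of $D$, so at least one region in $\mathcal{R}(\alpha)$ is not a face; that is, $r(\alpha)\ge 1$. Combining this with the upper bound from the previous paragraph gives $r(\alpha)=1$, which is the assertion of the claim.

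There is essentially no obstacle here: the statement is a direct book-keeping consequence of the maximality built into the definition of $\alpha_{0}$, and the only thing requiring care is the trivial observation that a non-clean tie has $r(\alpha)\ge 1$, which is immediate from the definitions of clean and bad ties. It is worth remarking that the identity $r(\alpha)=1$ together with $T(\alpha)$ being non-clean in fact pins the tie down completely: exactly one region—necessarily a $3$-patch, since the other $3$-patch and the $6$-patch must then be faces—fails to be a face, which is precisely the definition of a \emph{bad} tie. Thus the claim will effectively let us treat every non-clean tie in $D$ as a bad tie in the ensuing argument.
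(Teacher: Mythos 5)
Your proof is correct and matches the paper's reasoning: the paper simply asserts the claim "holds directly by the assumption of $\alpha_{0}$," which is exactly the combination of the extremality $r(\alpha)\le r(\alpha_{0})=1$ and the trivial lower bound $r(\alpha)\ge 1$ for a non-clean tie that you spell out. Your closing remark that such a tie is then necessarily bad is also correct and consistent with how the paper proceeds.
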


In this case, $D\cong D_{1}$.
Thus,  it suffices to consider the region $R_{1}$ of $D$.
Let $cr(D)=c$. 
Suppose there are $k$  bad ties, 
and $c-k$  clean ties, where $1\leq k \leq c$.
We have the  following claim.

\begin{claim}
$|\mathcal{F}_{5}|+2|\mathcal{F}_{6}|=2c+k$.
\end{claim}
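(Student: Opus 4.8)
The plan is to establish the identity by double counting incidences between ties and faces, with Proposition~\ref{tie and face} doing all of the structural work. Since $\mathcal{F}_6=\mathcal{A}_6\cup\mathcal{B}_6\cup\mathcal{D}_6\cup\mathcal{E}_6$, I would split the left-hand side as
\[
|\mathcal{F}_5|+2|\mathcal{F}_6|=\bigl(|\mathcal{F}_5|+2|\mathcal{A}_6|\bigr)+2|\mathcal{B}_6|+2\bigl(|\mathcal{D}_6|+|\mathcal{E}_6|\bigr),
\]
and evaluate each group against the $k$ bad ties and the $c-k$ clean ties. For $\mathcal{B}_6$: by Proposition~\ref{tie and face}(i) each bad tie is incident with exactly one $\mathcal{B}_6$-face, and by (iii) each $\mathcal{B}_6$-face is incident with exactly one bad tie, so this incidence relation is a bijection and $|\mathcal{B}_6|=k$. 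For $\mathcal{F}_5\cup\mathcal{A}_6$: part (i) gives that each bad tie contributes exactly one incidence with $\mathcal{F}_5\cup\mathcal{A}_6$ (total $k$), while (iii) gives that each $\mathcal{F}_5$-face absorbs one incidence and each $\mathcal{A}_6$-face absorbs two, whence $|\mathcal{F}_5|+2|\mathcal{A}_6|=k$.

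Substituting these two relations collapses the target to a single remaining equality:
\[
|\mathcal{F}_5|+2|\mathcal{F}_6|=k+2k+2\bigl(|\mathcal{D}_6|+|\mathcal{E}_6|\bigr)=3k+2\bigl(|\mathcal{D}_6|+|\mathcal{E}_6|\bigr),
\]
so it suffices to prove $|\mathcal{D}_6|+|\mathcal{E}_6|=c-k$; that is, the $\mathcal{D}_6$- and $\mathcal{E}_6$-faces are in bijection with the clean ties. One inequality is cheap: Proposition~\ref{tie and face}(iv) says every face of $\mathcal{D}_6\cup\mathcal{E}_6$ is incident with exactly one clean tie, and (ii) says every clean tie is incident with at most one such face, so $|\mathcal{D}_6|+|\mathcal{E}_6|\le c-k$.

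The hard part, and the real content of the claim, is the reverse inequality: I must upgrade the \emph{at most one} of Proposition~\ref{tie and face}(ii) to \emph{exactly one}, i.e. show that in Subcase~\ref{subcase2} no clean tie has \emph{both} of its two $6$-patch corners closed off by false $4$-faces (one in $\mathcal{A}_4$, between two white vertices, and one in $\mathcal{B}_4$, between two black vertices). Such a ``doubly-capped'' clean tie does occur in general --- it is exactly the local picture inside $K_{3,3}$ drawn with one crossing, where the crossing sits inside a clean $6$-cycle whose interior is tiled by the two $3$-patches and the two false $4$-faces, with no $\mathcal{D}_6$- or $\mathcal{E}_6$-face present. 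The plan is to rule this out using the defining hypotheses of this subcase: every non-clean tie satisfies $r(\alpha)=1$ and $D\cong D_{1}$, so that $\alpha_0$ was chosen with $r(\alpha_0)$ maximal and all the nontrivial structure is confined to the single $3$-patch $R_1$. I expect that a doubly-capped clean tie would exhibit a clean separating cycle of the same flavour as the clean $4$-cycle $\mathcal{L}_4$ used in Case~\ref{case1}, yielding a reduction incompatible with our being in the irreducible Subcase~\ref{subcase2}; making this exclusion precise is the principal obstacle. Once the number $a$ of doubly-capped clean ties is shown to be $0$, the bijection $|\mathcal{D}_6|+|\mathcal{E}_6|=c-k$, and hence the identity, follows at once.

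As an independent check on the bookkeeping I would confirm the identity through Euler's formula. Writing $f$ for the number of faces of $G^{\times}$, one has $f=e(G)-n+c+2$ and $\sum_F|F|=2e(G)+4c$; combined with the easily-counted value $|\mathcal{F}_3|=2c-k$ (each bad tie carries one $3$-face, each clean tie two), eliminating $|\mathcal{F}_4|$ gives $|\mathcal{F}_5|+2|\mathcal{F}_6|=|\mathcal{F}_3|-2e(G)+4n-8$. Thus the desired identity is equivalent to the clean edge count $e(G)=2n-4-k$, which both validates the arithmetic and previews how the claim will be fed into the size estimate completing Subcase~\ref{subcase2}.
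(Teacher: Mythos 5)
The first half of your argument coincides with the paper's proof: from Proposition~\ref{tie and face} you extract $|\mathcal{B}_6|=k$, $|\mathcal{F}_5|+2|\mathcal{A}_6|=k$ and $|\mathcal{D}_6|+|\mathcal{E}_6|\le c-k$, and summing gives $|\mathcal{F}_5|+2|\mathcal{F}_6|\le 2c+k$. That inequality is in fact all the paper establishes --- although the claim is displayed with an equality sign, the chain in its proof ends with ``$\le 2c+k$'' --- and it is all that the subsequent computation needs, since feeding $f_5+2f_6\le 2c+k$ and $f_3=2c-k$ into the Euler-formula bookkeeping yields $e(D)\ge 2n(D)-4-k$, which is the direction required for the lower bound on $e(G)$.

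The genuine gap is therefore in the second half, where you promote the reverse inequality to ``the real content'' and leave it as an unresolved ``principal obstacle.'' That step is not merely missing; it cannot be carried out, because nothing in Subcase~\ref{subcase2} forbids a doubly capped clean tie. The hypothesis there is that $r(\alpha)=1$ for every \emph{non-clean} tie; it places no constraint on the local picture around a clean tie elsewhere in the drawing, so a clean tie whose $6$-patch is tiled by its two $3$-patches together with one $\mathcal{A}_4$-face and one $\mathcal{B}_4$-face (exactly the configuration of Proposition~\ref{3-connected property}(iii)) can coexist with a bad tie in another part of $D$. For such a drawing $|\mathcal{D}_6|+|\mathcal{E}_6|<c-k$ and the stated equality fails, while the inequality survives. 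Your own Euler-formula cross-check makes this quantitative: writing $a$ for the number of doubly capped clean ties, one gets $e(D)=2n(D)-4-k+a$, so the equality version of the claim is equivalent to $a=0$, which is simply not implied by the hypotheses. The correct reading of the claim is as an upper bound; your proposal already contains a complete proof of that bound, and the effort you reserved for the ``hard part'' should be dropped rather than pursued.
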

\begin{proof}
By Proposition  \ref{tie and face} (i) and (ii),
we have $|\mathcal{F}_{3}|=k+2(c-k)=2c-k$ and $|\mathcal{B}_{6}|=k$.
Now, we consider a bipartite graph $H=(X,Y)$, where $X$ is the set of bad ties in $D$ and $Y$ is the face set of $\mathcal{F}_{5}\cup \mathcal{A}_{6}$. 
By Proposition  \ref{tie and face} (iii), each false $5$-face in $\mathcal{F}_{5}$ is incident to exactly one bad tie,
and each face in $\mathcal{A}_{6}$ is incident to exactly two bad ties.
Thus, $e(H)=|\mathcal{F}_{5}|+2|\mathcal{A}_{6}|$.
By Proposition \ref{tie and face} (i), each bad tie is incident to a face in $\mathcal{\mathcal{F}}_{5}\cup \mathcal{\mathcal{A}}_{6}$.
Since $\mathcal{F}_{5} \cap\mathcal{A}_{6}=\varnothing$, we have $e(H)=k$,
implying $|\mathcal{F}_{5}|+2|\mathcal{A}_{6}|=k$.
Moreover, by Proposition  \ref{tie and face} (iv), each face in $\mathcal{D}_{6}\cup \mathcal{E}_{6}$ is incident to exactly one clean tie.
By Proposition \ref{tie and face} (ii), each clean tie is incident to at most one face in $\mathcal{D}_{6}\cup \mathcal{E}_{6}$.
Since $\mathcal{D}_{6} \cap\mathcal{E}_{6}=\varnothing$, $|\mathcal{D}_{6}|+|\mathcal{E}_{6}|\leq c-k$.
Combining these results, we have
\begin{align}
|\mathcal{F}_{5}|+2|\mathcal{F}_{6}| 
\nonumber
&= |\mathcal{F}_{5}|+2|\mathcal{A}_{6}|+2|\mathcal{B}_{6}|+2|\mathcal{D}_{6}|+2|\mathcal{E}_{6}|  \\
\nonumber
&\leq (|\mathcal{F}_{5}|+2|\mathcal{A}_{6}|)+2k+2c-2k \\
\nonumber
&= k+2k+2c-2k \\
\nonumber
&= 2c+k.
\end{align}
Hence, the claim holds.
\end{proof}

Let $D^{\times}$ be the associated graph of $D$.
Then $$n(D^{\times})=n(D)+c, e(D^{\times})=e(D)+2c$$ and $$f(D^{\times})=\sum_{t=3}^{6}f_{t}, $$
where $f(D^{\times})$ denote the number of faces in $D^{\times}$ and $f_{t}$ denote the number of faces of size $t$ in $D^{\times}$.
By Proposition \ref{seven}, $3\leq t\leq 6$.
Applying Euler's formula $n(D^{\times})-e(D^{\times})+f(D^{\times})=2$,
we obtain $$(n(D)+c)-(e(D)+2c)+\sum_{t=3}^{6}f_{t}=n(D)+c-e(D)-2c+f_{3}+f_{4}+f_{5}+f_{6}=2,$$
which implies $f_{4}=2+e(D)+c-n(D)-f_{3}-f_{5}-f_{6}$.

From the handshaking lemma, we have $2e(D^{\times})= \sum _{t=3}^{6}tf_{t}$, 
Furthermore, we have 
\begin{align*}
2e(D)+4c
\nonumber
&= 3f_{3}+4f_{4}+5f_{5}+6f_{6}  \\
\nonumber
&= 3f_{3}+(8+4e(D)+4c-4n(D)-4f_{3}-4f_{5}-4f_{6})+5f_{5}+6f_{6},
\end{align*}
implying that 
\begin{align*}
2e(D)
\nonumber
&= 4n(D)-8+f_{3}-f_{5}-2f_{6}  \\
\nonumber
&= 4n(D)-8+2c-k-2c-k,
\end{align*}
and thus $e(D)=2n(D)-4-k$.

Since $k\leq c$, we have $e(D)\geq 2n(D)-4-c$.
By Lemma \ref{lem:zhang}, $c\leq\frac{n(G)}{4}$, 
it follows that 
\begin{align*}
e(G)&=e(D)\geq 2n(G)-4-\frac{n(G)}{4}\\
&=\frac{7}{4}n-4.
\end{align*}

Then, $e(G)\geq \lceil\frac{7}{4}n(G)-4 \rceil \geq\frac{3}{2}n-2$ holds for $n\geq6$.
Thus, the result also holds for Subcase \ref{subcase2}.
\end{proof}

Now, we show that the lower bound of Theorem \ref{2-connected} is sharp by the following lemma.

\begin{lemma}\label{2-connectedtheorem}
There exist infinitely many $2$-connected MBICP-graphs $H_{k}$ with $n(H_{k})=4k$ ($k\geq1$)  and $e(H_{k})=\frac{3}{2}n(H_{k})-2$.
\end{lemma}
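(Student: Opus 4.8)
The plan is to build, for every $k\ge 1$, an explicit $1$-planar drawing of a graph $H_k$ and then check by hand that it is a $2$-connected MBICP-graph with $n(H_k)=4k$ and $e(H_k)=6k-2$. I start from $H_1=K_{2,2}$ drawn as a single tie $T(\alpha)$ (Figure~\ref{tie0}); this is complete bipartite, so it is automatically maximal, it is $2$-connected, and $e=4=\frac32\cdot 4-2$. The engine of the construction is a \emph{path-gadget}: given a false $3$-face whose boundary is a clean edge $uv$ ($u$ black, $v$ white) together with one crossing, I insert two new vertices $p$ (white), $q$ (black) and the three edges $up,pq,qv$. By Lemma~\ref{cleantie} the edge $uv$ is clean, so the insertion is legal, and it divides the old $3$-face into a true $4$-face $upqv\in\mathcal{D}_4$ and a false $5$-face in $\mathcal{F}_5$; it adds exactly $2$ vertices and $3$ edges, which preserves the relation $e=\frac32 n-2$. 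I would assemble $H_k$ so that each increment of $k$ contributes $4$ new vertices and $6$ new edges (two path-gadgets, inserted into $3$-faces supplied by the crossings of the ties), laying the ties out in a closed, cyclically symmetric pattern so that the unbounded region is itself a small saturated face rather than a long boundary.

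With the drawing in hand, the routine verifications come first and in this order. Bipartiteness is immediate from the black/white labelling. The drawing is IC-plane because every crossing is the unique crossing of some tie (Lemma~\ref{cleantie}), distinct ties are placed on vertex-disjoint quadruples so that no two crossing pairs share an endpoint, and by construction each clean edge and each inserted edge carries no crossing. Two-connectivity follows by checking that $H_k$ has no cut vertex: each tie is $2$-connected and each path-gadget is attached to the rest along two distinct vertices, so deleting any single vertex leaves the drawing connected. Finally the counts $n=4k$, $e=6k-2$ can be read off either directly from the $+4/+6$ bookkeeping or from Euler's formula applied to the associated plane graph $H_k^{\times}$ together with the face inventory of Proposition~\ref{seven}.

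The main obstacle is \textbf{maximality}. The clean criterion to use is Lemma~\ref{properties of G}(i): $H_k$ is maximal precisely when, for \emph{every} face $F$, the true vertices satisfy $H_k[T(F)]\cong K_{\ell_b,\ell_w}$; equivalently, by Lemma~\ref{lem:face}, no two differently coloured vertices lie on a common face without already being adjacent. So I would run through the possible faces of Proposition~\ref{seven} (Figure~\ref{facein2-con}) and confirm that each face produced by the construction is one of the self-saturated types: $3$-faces, true $4$-faces $\mathcal{D}_4$, the $5$-faces $\mathcal{F}_5$ arising from the gadgets (whose four true vertices form a $K_{2,2}$), and the $6$-patches $\mathcal{B}_6$ of the ties (whose four true vertices are the complete bipartite tie). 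The delicate steps are (a) ruling out large false faces with non-adjacent opposite-coloured vertices --- this is exactly what forbids inserting a bare tie into a triangular region, and it dictates that only path-gadgets may be used and only into $3$-faces; and (b) guaranteeing that the unbounded face is small, since a long outer boundary would admit an added chord and destroy maximality. Controlling these two points for all $k$ simultaneously, while keeping the crossings independent, is where the real work lies; the resulting identity $e(H_k)=\frac32 n(H_k)-2$ then matches the lower bound of Theorem~\ref{2-connected} and establishes sharpness.
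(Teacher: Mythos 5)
Your base case, your bookkeeping ($+4$ vertices, $+6$ edges per step), and your identification of the real difficulty (verifying maximality face-by-face via Lemma \ref{lem:face} and Proposition \ref{seven}) are all sound, and your path-gadget is indeed legal in a false $3$-face: it does produce a saturated true $4$-face and a saturated face of type $\mathcal{F}_5$. The gap is that this gadget cannot be iterated. Each insertion of a path-gadget \emph{consumes} a false $3$-face and creates no new crossing, hence no new false $3$-face; and the gadget only works inside a $3$-face (inserted into a true $4$-face it would create a $6$-face with three true vertices of each colour, forcing a $K_{3,3}$ by Lemma \ref{lem:face}, which is impossible by Lemma \ref{OB1P}). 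Starting from $H_1$, a single tie, you have exactly two false $3$-faces, so you can perform at most two insertions and the construction stalls at $n=8$ ($k=2$). Your phrase ``$3$-faces supplied by the crossings of the ties'' presupposes a growing supply of ties, but your increment step adds no ties, and the ``closed, cyclically symmetric pattern'' is never specified; note also that a genuinely closed cycle of $j$ ties joined by two edges per junction would give $4j+2j=6j$ edges, overshooting $\frac{3}{2}n-2$ by $2$.

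The paper's construction repairs exactly this defect: its gadget $W_i$ is not a path but an entire new tie ($4$ vertices, $4$ edges, $1$ crossing) attached to the two true vertices of the host $3$-face by $2$ further edges, again $+4$ vertices and $+6$ edges. This destroys the host $3$-face but the new crossing supplies a fresh false $3$-face, so each $H_k$ has exactly two false $3$-faces and the recursion runs for all $k$; the number of crossings grows as $k=n/4$, meeting the bound of Lemma \ref{lem:zhang} with equality. To salvage your argument you would need to replace (or supplement) the path-gadget with a crossing-producing gadget of this kind; as written, the proposal does not yield infinitely many graphs.
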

\begin{proof}
First, we present two configurations $W_{1}$ and $W_{2}$, as shown in Figure \ref{2-connectedgraph} $(1)$ and $(2)$,
where the dashed lines represent the boundary of a false $3$-face.

For $n=4$, let $H_{1}$ be the complete bipartite graph $K_{2,2}$ with two crossing edges as shown in Figure \ref{2-connectedgraph} $(3)$.
Note that $H_{1}$ contains exactly two false $3$-faces, $F_{1}$ and $F_{2}$.
Then we select an arbitrary false $3$-face of $H_{1}$ and insert a configuration $W_{i}$ $(i\in\{1,2\})$ into it to construct the graph $H_{2}$,
where the selection of $W_{i}$ depends on which false $3$-face we choose.
For instance, the graph $H_{2}$ shown in Figure \ref{2-connectedgraph} $(4)$ is obtained by inserting $W_{1}$ into the face $F_{2}$ of $H_{1}$.
Observe that $H_{2}$ also has exactly two false $3$-faces.

Similarly, for $k\geq3$, the graph $H_{k}$ is constructed by choosing an arbitrary false $3$-face of $H_{k-1}$ 
and inserting the corresponding configuration $W_{i}$ $(i\in\{1,2\})$ into it.
We can check that  $H_{k}$ is a MBICP-graph with exactly two false $3$-faces.
Moreover, for each $k\geq1$, $H_{k}$ is $2$-connected, as it contains no cut-vertex.
For $k=1$, we have $n(H_{1})=4$ and $e(H_{1})=4$.
For $k\geq2$, We add four new vertices and six new edges from $H_{k}$ to $H_{k-1}$.
It implies that $n(H_{k})=4k$ and $e(H_{k})=4+6(k-1)=6k-2$.
Thus, for each $k\geq1$, $e(H_{k})=\frac{3}{2}n(H_{k})-2$.
Hence we complete the proof.
\end{proof}

\begin{figure}[h!]
  \centering
\begin{tikzpicture}[scale=0.6]
	\begin{pgfonlayer}{nodelayer}
		\node [style=whitenode] (0) at (-12, 0) {};
		\node [style=blacknode] (1) at (-7, 0) {};
		\node [style=none] (2) at (-9.5, 4) {};
		\node [style=blacknode] (3) at (-10, 1.75) {};
		\node [style=blacknode] (4) at (-10, 1) {};
		\node [style=whitenode] (5) at (-9, 1.75) {};
		\node [style=whitenode] (6) at (-9, 1) {};
		\node [style=blacknode] (7) at (-5, 0) {};
		\node [style=whitenode] (8) at (0, 0) {};
		\node [style=none] (9) at (-2.5, 4) {};
		\node [style=whitenode] (10) at (-3, 1.75) {};
		\node [style=whitenode] (11) at (-3, 1) {};
		\node [style=blacknode] (12) at (-2, 1.75) {};
		\node [style=blacknode] (13) at (-2, 1) {};
		\node [style=whitenode] (14) at (2, 5) {};
		\node [style=whitenode] (15) at (2, 0) {};
		\node [style=blacknode] (16) at (6, 5) {};
		\node [style=blacknode] (17) at (6, 0) {};
		\node [style=whitenode] (26) at (9, 5) {};
		\node [style=blacknode] (27) at (14, 5) {};
		\node [style=whitenode] (28) at (9, 0) {};
		\node [style=blacknode] (29) at (14, 0) {};
		\node [style=blacknode] (30) at (11, 2.1) {};
		\node [style=whitenode] (31) at (12, 2.1) {};
		\node [style=blacknode] (32) at (11, 1.35) {};
		\node [style=whitenode] (33) at (12, 1.35) {};
		\node [style=none] (34) at (4, 4) {\color{blue}$F_{1}$};
		\node [style=none] (35) at (4, 1) {\color{blue}$F_{2}$};
		\node [style=none] (36) at (-9.5, -1) {};
		\node [style=none] (37) at (-2.5, -1) {$(2)$ $W_{2}$};
		\node [style=none] (38) at (4, -1) {$(3)$ $H_{1}$};
		\node [style=none] (39) at (11.5, -1) {$(4)$ $H_{2}$};
		\node [style=none] (40) at (-9.5, -1) {$(1)$ $W_{1}$};
		\node [style=none] (41) at (-9, 4.75) {};
		\node [style=none] (42) at (-10, 4.75) {};
		\node [style=none] (43) at (-2, 4.75) {};
		\node [style=none] (44) at (-3, 4.75) {};
	\end{pgfonlayer}
	\begin{pgfonlayer}{edgelayer}
		\draw [dash pattern=on 2pt off 2pt] (0) to (41.center);
		\draw [dash pattern=on 2pt off 2pt] (1) to (42.center);
		\draw [dash pattern=on 2pt off 2pt] (43.center) to (7);
		\draw [dash pattern=on 2pt off 2pt] (44.center) to (8);
		\draw [dash pattern=on 2pt off 2pt] (0) to (1);
		\draw [style={blackedge_thick}] (3) to (6);
		\draw [style={blackedge_thick}] (5) to (4);
		\draw [style={blackedge_thick}] (4) to (6);
		\draw [style={blackedge_thick}, bend right=135, looseness=5.25] (3) to (5);
		\draw [style={blackedge_thick}] (3) to (0);
		\draw [style={blackedge_thick}] (5) to (1);
		\draw [dash pattern=on 2pt off 2pt] (7) to (8);
		\draw [style={blackedge_thick}] (10) to (13);
		\draw [style={blackedge_thick}] (12) to (11);
		\draw [style={blackedge_thick}] (11) to (13);
		\draw [style={blackedge_thick}, bend right=135, looseness=5.25] (10) to (12);
		\draw [style={blackedge_thick}] (10) to (7);
		\draw [style={blackedge_thick}] (12) to (8);
		\draw [style={blackedge_thick}] (16) to (15);
		\draw [style={blackedge_thick}] (14) to (17);
		\draw [style={blackedge_thick}] (14) to (16);
		\draw [style={blackedge_thick}] (15) to (17);
		\draw [style={blackedge_thick}](26) to (27);
		\draw [style={blackedge_thick}, bend right] (27) to (28);
		\draw [style={blackedge_thick}] (28) to (29);
		\draw [style={blackedge_thick}, bend left] (26) to (29);
		\draw [style={blackedge_thick}] (30) to (33);
		\draw [style={blackedge_thick}] (31) to (32);
		\draw [style={blackedge_thick}] (32) to (33);
		\draw [style={blackedge_thick}, bend right=135, looseness=6.00] (30) to (31);
		\draw [style={blackedge_thick}] (30) to (28);
		\draw [style={blackedge_thick}] (31) to (29);
	\end{pgfonlayer}
\end{tikzpicture}
  \caption{The construction of extremal graphs with $\frac{3}{2}n-2$ edges.}
  \label{2-connectedgraph}
\end{figure}
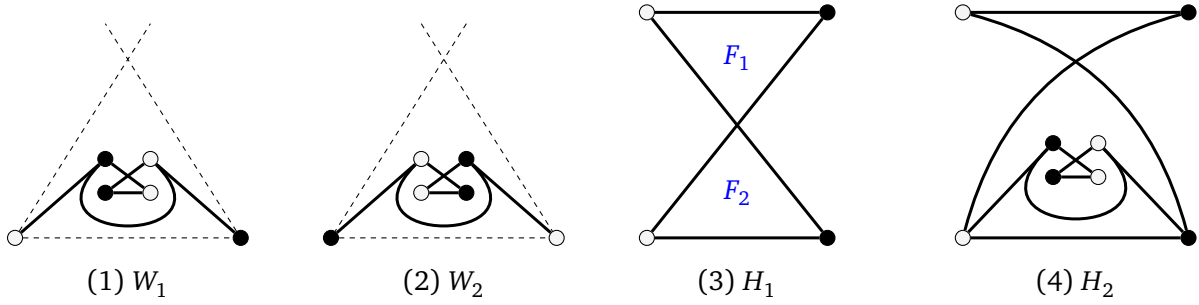

\subsection{Proof of Theorem \ref{3-connected}}
\begin{proof}
Let $D$ be the corresponding maximal bipartite IC-planar drawing of $G$.
Since $G$ is 3-connected,
by Proposition \ref{3-connected property} (iv), $D_{p}$ is a quadrangulation.
Thus, $e(D_{p})=2n(D_{p})-4=2n(G)-4$.
By Corollary \ref{3-con cr=0}, $cr(D)\geq1$.
Then $e(G)=e(D_{p})+cr(D)\geq 2n(G)-4+1=2n(G)-3$.
\end{proof}

The extremal construction below shows that the bound in Theorem \ref{3-connected} is sharp.

\begin{lemma}\label{3-connectedtheorem}
For any $k\geq3$, there exists a  $3$-connected MBICP-graph with $n=2k$ vertices and  $2n-3$ edges.
\end{lemma}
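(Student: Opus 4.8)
The plan is to exhibit an explicit infinite family $\{G_k\}_{k\ge 3}$ of 3-connected MBICP-graphs with $n(G_k)=2k$ and $e(G_k)=2n(G_k)-3$. The guiding identity comes from the proof of Theorem~\ref{3-connected}: for any 3-connected MBICP-graph $G$ one has $e(G)=e(G_{p})+cr(D)$, and by Proposition~\ref{3-connected property}(iv) the plane graph $G_{p}$ is a quadrangulation, so $e(G_{p})=2n-4$. Hence achieving $e(G)=2n-3$ is \emph{equivalent} to producing a bipartite quadrangulation $G_{p}$ that carries exactly one crossing, i.e. $cr(D)=1$. I would first settle the base case $k=3$: the graph $K_{3,3}$, drawn as a hexagon $v_1v_2\cdots v_6$ with the two main diagonals $v_1v_4$ and $v_3v_6$ crossing inside and the third diagonal $v_2v_5$ routed in the outer region, is a 3-connected bipartite IC-plane graph with one crossing and $9=2\cdot 6-3$ edges. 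By Proposition~\ref{3-connected property} its faces are exactly two false $3$-faces, one face of $\mathcal{A}_{4}$, one of $\mathcal{B}_{4}$, and two true $4$-faces, and maximality is checked directly from Lemma~\ref{lem:face}.

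For general $k$ I would build $G_k$ so that its unique crossing sits inside a clean hexagon $H$ (as forced by Proposition~\ref{3-connected property}(iii)), and attach to the exterior of $H$ a rigid bipartite quadrangulation on the remaining $2k-6$ vertices. Rather than a naive recursion---inserting a vertex into a single true $4$-face always produces a degree-$2$ vertex and destroys $3$-connectivity---I would give $G_k$ explicitly as $H$ plus a \emph{book-type} exterior quadrangulation anchored at the two colour classes of the crossing-incident vertices of $H$, so that every peripheral vertex acquires a third neighbour. The design requirement, dictated by the obstruction below, is that every exterior edge be framed by two already-present $4$-cycles, in the manner of $K_{2,m}$. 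The verification then splits into four checks. Bipartiteness is immediate from the fixed $2$-colouring respecting $H$; the drawing is IC-plane with exactly one crossing because all exterior edges are clean and only the two diagonals of $H$ cross (consistent with Lemma~\ref{lem:zhang}). The edge count $e(G_k)=2n-3$ I would read off Euler's formula applied to the associated plane graph $G_k^{\times}$, where $n^{\times}=n+1$ and $e^{\times}=e+2$, tallying the two $3$-faces, the $\mathcal{A}_{4}$- and $\mathcal{B}_{4}$-faces, and the true $4$-faces.

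Maximality against \emph{clean} edges is the easy half: by Lemma~\ref{lem:face} any two differently coloured vertices sharing a face must already be adjacent, which one reads off the four admissible face types of Proposition~\ref{3-connected property}(ii). The step I expect to be the genuine obstacle is ruling out a \emph{second} crossing, which is in direct tension with $3$-connectivity. Any candidate new crossing lies across some edge $ab$; if $ab$ is incident to one of the four vertices of the existing crossing it is automatically blocked by IC-planarity, so the difficulty is confined to edges joining vertices far from $H$. For such an edge the two admissible crossing chords must \emph{both} already be edges, which forces the exterior to be ``fully framed'' as in $K_{2,m}$; yet that tends to create degree-$2$ vertices, whereas $3$-connectivity demands minimum degree at least $3$. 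Reconciling these two demands is the crux, and it is why the exterior must be anchored at $H$ (so each peripheral vertex gains a third neighbour while its unique admissible crossing chord coincides with an already-present hexagon or diagonal edge, hence is unavailable). Finally I would confirm $3$-connectivity by producing, for each pair of vertices, two internally disjoint paths routed through the hexagon and the book, thereby showing the bound $2n-3$ of Theorem~\ref{3-connected} is attained for every $n=2k$ with $k\ge 3$.
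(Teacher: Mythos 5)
Your reduction of the problem is sound: since Proposition~\ref{3-connected property}(iv) forces $G_{p}$ to be a quadrangulation, attaining $e(G)=2n-3$ amounts to exhibiting a $3$-connected maximal bipartite IC-plane drawing with exactly one crossing, and your base case $K_{3,3}$ (which is also the paper's $G_6$) is correct. The genuine gap is that for $k\ge 4$ you never actually exhibit the graph. ``A book-type exterior quadrangulation anchored at the two colour classes of the crossing-incident vertices of $H$'' is a list of desiderata, not a construction, and you yourself flag the unresolved crux: a $K_{2,m}$-style framing that blocks new crossing edges tends to manufacture degree-$2$ vertices, while $3$-connectivity forbids them. A proof of an existence statement must resolve exactly this tension by writing down the graph and verifying bipartiteness, IC-planarity, maximality, the edge count, and $3$-connectivity on it; as written, every one of your four verification steps is contingent on an object that is not specified.

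For comparison, the paper resolves the tension with a global construction rather than a local one around the hexagon: take the pseudo double wheel $W_{n}$, i.e.\ an even cycle $C$ on $n-2$ vertices drawn without crossings, an inner hub $b_{n/2}$ joined to all white vertices of $C$, and an outer hub $w_{n/2}$ joined to all black vertices of $C$; this is a quadrangulation with $2n-4$ edges in which every cycle vertex already has degree $3$ (two cycle neighbours plus one hub), so $3$-connectivity comes for free for $n\ge 8$. Adding the single edge $b_{n/2}w_{n/2}$, which must cross one edge of $C$, yields the $(2n-3)$-edge graph; maximality then follows cheaply because every non-adjacent cross-colour pair lies on $C$, and any curve joining such a pair must cross a spoke incident to a hub, which is already incident to the unique crossing, violating IC-planarity. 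Your hexagon-centred picture is consistent with this graph (the clean $6$-cycle of Proposition~\ref{3-connected property}(iii) is present around the crossing), but the ``exterior quadrangulation'' you need is precisely this double-wheel structure, and without naming it your argument does not close.
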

\begin{proof}
To begin with, an even cycle $C$ of length $n-2$ is constructed, containing $\frac{n}{2}-1$ white vertices and $\frac{n}{2}-1$ black vertices.
Let $C$ be drawn in the plane without any crossings.
 Subsequently, a black vertex $b_{\frac{n}{2}}$ is inserted inside $C$, and a white vertex $w_{\frac{n}{2}}$ is placed outside $C$.
Next, connect $b_{\frac{n}{2}}$ to all white vertices of $V(C)$ within $C$, 
and connect $w_{\frac{n}{2}}$ to all black vertices of $V(C)$ in the exterior of $C$.
The resulting graph is known as a pseudo double wheel graph, which we denote by $W_{n}$.
Note that $W_{n}$ is a quadrangulation of order $n$.
Thus $e(W_{n})=2n-4$.
Finally, we add an edge $b_{\frac{n}{2}}w_{\frac{n}{2}}$ to $W_{n}$ to obtain graph $G_{n}$,
as shown in Figure \ref{3-connectedgraph}.

Observe that the edge $b_{\frac{n}{2}}w_{\frac{n}{2}}$ crosses with an edge of $E(C)$.
Consequently, vertices $b_{\frac{n}{2}}$ and $w_{\frac{n}{2}}$ are incident with a crossing.
Thus it is not difficult to see that no more edges can be added to $G_{n}$ without violating the assumption of $G$.
Hence, $G_{n}$ is a MBICP-graph.

Now, we prove $G_{n}$ is $3$-connected.
Since $W_{n}\subseteq G_{n}$ and $W_{n}$ is $3$-connected for $n\geq 8$, 
it follows that $G_{n}$ is $3$-connected for $n\geq8$. 
For $n = 6$, one has $G_6 \cong K_{3,3}$, which is $3$-connected.
Hence, $G_{n}$ is $3$-connected.
Moreover, $n(G_{n})=n$ and $e(G_{n})=e(W_{n})+1=2n-3$.
\end{proof}

\begin{figure}[h!]
  \centering
\begin{tikzpicture}[scale=0.65]
	\begin{pgfonlayer}{nodelayer}
		\node [style=blacknode] (4) at (0, 0) {};
		\node [style=blacknode] (5) at (0, 3) {};
		\node [style=blacknode] (6) at (0, -3) {};
		\node [style=blacknode] (7) at (-3.025, 0) {};
		\node [style=blacknode] (8) at (3, 0) {};
		\node [style=whitenode] (9) at (-2.25, 2.025) {};
		\node [style=whitenode] (10) at (2.25, 2) {};
		\node [style=whitenode] (11) at (2.25, -2) {};
		\node [style=whitenode] (12) at (-2.3, -2.05) {};
		\node [style=whitenode] (13) at (0, 6) {};
		\node [style=none] (14) at (0.5, 3.5) {$b_{1}$};
		\node [style=none] (15) at (2.725, 2.325) {$w_{1}$};
		\node [style=none] (16) at (3.5, 0) {$b_{2}$};
		\node [style=none] (17) at (3, -1.75) {$w_{2}$};
		\node [style=none] (18) at (0, -3.5) {$b_{3}$};
		\node [style=none] (19) at (-2.75, -2.5) {$w_{3}$};
		\node [style=none] (20) at (-3.825, 0) {$b_{\frac{n}{2}-1}$};
		\node [style=none] (21) at (-2.3, 2.7) {$w_{\frac{n}{2}-1}$};
		\node [style=none] (22) at (0.75, 0) {$b_{\frac{n}{2}}$};
		\node [style=none] (23) at (0, 6.75) {$w_{\frac{n}{2}}$};
		\node [style=blacknode_v1] (24) at (-2.425, -0.4) {};
		\node [style=blacknode_v1] (25) at (-2.35, -0.8) {};
		\node [style=blacknode_v1] (26) at (-2.125, -1.125) {};
		\node [style=blacknode_v1] (27) at (-3.45, -1.2) {};
		\node [style=blacknode_v1] (28) at (-3.35, -1.55) {};
		\node [style=blacknode_v1] (29) at (-3.1, -1.85)  {};
	\end{pgfonlayer}
	\begin{pgfonlayer}{edgelayer}
		\draw [style=blackedge_thick, bend left=45] (5) to (8);
		\draw [style=blackedge_thick, bend left=45] (8) to (6);
		\draw [style=blackedge_thick, bend right=45] (5) to (7);
		\draw [style=blackedge_thick] (4) to (9.center);
		\draw [style=blackedge_thick] (4) to (10.center);
		\draw [style=blackedge_thick] (4) to (11.center);
		\draw [style=blackedge_thick] (4) to (12.center);
		\draw [style=blackedge_thick] (13.center) to (5);
		\draw [style=blackedge_thick, bend left=45, looseness=1.25] (13.center) to (8);
		\draw [style=blackedge_thick, in=105, out=-165] (13.center) to (7);
		\draw [style=blackedge_thick, in=-15, out=0, looseness=2.00] (13.center) to (6);
		\draw [style=black_bold, bend right] (13.center) to (4);
		\draw [dash pattern=on 2pt off 2pt, bend right=15, looseness=1.25] (7) to (12.center);
		\draw [style=blackedge, in=180, out=-45, looseness=0.75]  (12.center) to (6);
	\end{pgfonlayer}
\end{tikzpicture}
  \caption{The extremal graph $G_{n}$ with $2n-3$ edges.}
  \label{3-connectedgraph}
\end{figure}
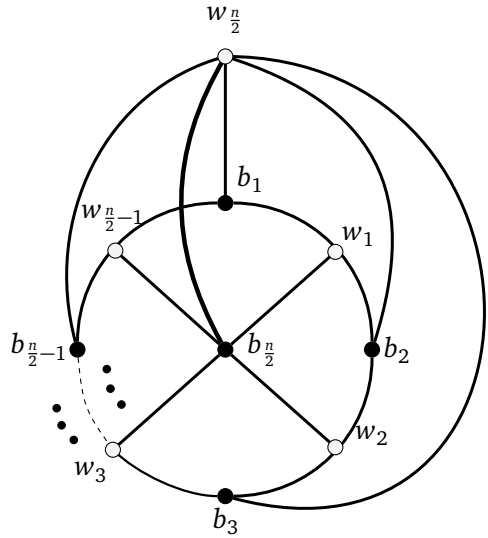

\section{Concluding remarks and open problems}\label{further study}
\noindent In the previous section, we determined the minimum size of MBIC-graphs with connectivity 2 and 3.
However, Problem \ref{prob:1} for connectivity 4 has not yet been solved. Nevertheless, we are able to conjecture a bound.

Given a graph $G$=$(V(G),E(G))$ and two subsets $S, T\subset V(G)$ such that $S\cap T=\varnothing$,
denote by $E_{G}(S,T)$ (or simply $E(S,T)$) the set of edges in $G$ with one end in $S$ and the other end in $T$,
and let $e_{G}(S,T)$ (or simply $e(S,T)$) denote the cardinality of $E_{G}(S,T)$.

\begin{theorem}\label{4-connected extremal graph}
For $k\geq1$, there exist infinitely many $4$-connected MBIC-graphs $G_{k}$ with $n(G_{k})=24k+4$ and $e(G_{k})=\frac{13n(G_{k})}{6}-\frac{14}{3}$.
\end{theorem}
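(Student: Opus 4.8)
The plan is to exhibit an explicit family $\{G_k\}$, each equipped with a bipartite IC-plane drawing $D$, and to verify the four defining properties of a MBICP-graph together with the prescribed order and size. Since $G_k$ is to be $3$-connected (indeed $4$-connected), Proposition \ref{3-connected property}(iv) tells us that the planarization $G_p$, obtained from $D$ by deleting one edge from each crossing pair, is a quadrangulation, so $e(G_p)=2n-4$ and $e(G_k)=e(G_p)+cr(D)$. The prescribed size $\frac{13}{6}n-\frac{14}{3}=52k+4$ therefore forces exactly $cr(D)=4k$ crossings, consistent with the bound $cr(D)\le n/4=6k+1$ of Lemma \ref{lem:zhang}. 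Thus the whole problem reduces to building, for each $k$, a bipartite quadrangulation on $n=24k+4$ vertices into which $4k$ pairwise independent crossings (clean ties, in the sense of Proposition \ref{3-connected property}) can be inserted so that the result is simple, $4$-connected, and maximal.

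Concretely, I would use a cyclic (annular) design: a fixed core of four vertices, two of each colour, accounting for the ``$+4$'', around which $k$ identical gadgets, each on $24$ vertices and each carrying four clean ties, are wrapped and glued along their boundaries so that $G_p$ closes up into a quadrangulation. Bipartiteness and IC-planarity are then routine: the $2$-colouring is inherited from the quadrangulation, each tie is itself bipartite, and by Proposition \ref{3-connected property}(iii) every crossing pair sits inside a clean $6$-cycle, which guarantees that the crossings are independent and that no edge is crossed twice. For the size, $G_p$ contributes $2n-4$ edges and the $4k$ ties contribute one extra edge each, giving $e(G_k)=2(24k+4)-4+4k=52k+4$, as required.

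For $4$-connectivity I would arrange the core and gadgets so that $G_p$ is itself a $4$-connected quadrangulation; since $G_p$ is a spanning subgraph of $G_k$ and connectivity cannot decrease under edge addition, this immediately yields $\kappa(G_k)\ge\kappa(G_p)\ge 4$. (If enforcing $4$-connectivity of $G_p$ proves too rigid, the crossing edges, which only add adjacencies, can instead certify the missing local connections at the gadget seams.) Either way, the task is to rule out every $3$-cut, which by the annular symmetry amounts to counting internally disjoint paths across a single gadget boundary.

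The genuinely delicate step, and the one I expect to be the main obstacle, is maximality. By Lemma \ref{lem:face} no clean edge can be inserted precisely when, on every face, all pairs of oppositely coloured true vertices are already adjacent; on a true $4$-face this is automatic, since its two diagonals join vertices of equal colour and so are forbidden by bipartiteness, but the false faces produced by the ties must be checked one by one against the types in Figure \ref{facein2-con}(1)--(4). The subtler requirement is that no further \emph{crossing} edge can be added: as the proof of Theorem \ref{cr=0} shows, a bipartite quadrangulation always admits a new crossing across the shared edge of two adjacent quadrilaterals, joining their two opposite-coloured outer vertices, so the $4k$ ties must be placed to ``saturate'' every such opportunity---each potential new crossing must be blocked because its would-be crossed edge already carries a crossing, or because it would share an endpoint with an existing crossing and hence violate IC-planarity, or because the two outer vertices are already adjacent. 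Designing one $24$-vertex gadget that simultaneously achieves this saturation, stays simple and bipartite, tiles consistently for all $k\ge1$, and realises exactly four crossings is where the real work lies.
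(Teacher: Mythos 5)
There is a genuine gap: for an existence theorem the construction \emph{is} the proof, and your proposal does not produce one. Your reduction is sound and your arithmetic matches the paper exactly (a $4$-vertex core, layers of $24$ vertices, $cr(D)=4k$, $e(G_k)=2n-4+4k=52k+4$), but you explicitly defer the only substantive step --- ``designing one $24$-vertex gadget \ldots is where the real work lies'' --- so nothing is actually proved. The paper supplies precisely this missing object: three nested cycles $C_1,C_2,C_3$ of lengths $4,12,12$ with explicitly listed inter-cycle edge sets and four chords $c_1c_4,c_4c_7,c_7c_{10},c_{10}c_1$ on the outer cycle realising the four crossings of $G_1$; the general $G_k$ is obtained by recursively nesting $G_{k-1}$ into the innermost $4$-face of $G_1$ and identifying boundary $4$-cycles, and $4$-connectivity is verified in Appendix~I by a case analysis on how a $3$-set $S$ meets the base cycles.

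A second, concrete error is your primary route to $4$-connectivity. You propose to arrange that the planarization $G_p$ is itself a $4$-connected quadrangulation and then use $\kappa(G_k)\ge\kappa(G_p)$. This is impossible: a quadrangulation on $n$ vertices has $2n-4$ edges, hence average degree below $4$, hence a vertex of degree at most $3$, so $\kappa(G_p)\le\delta(G_p)\le 3$ always. Your parenthetical fallback (letting the crossing edges supply the missing adjacencies) is in fact the only viable route, and it is what the paper's construction does: the vertices $c_1,c_4,c_7,c_{10}$ have degree $2$ before the four chords are added and reach degree $4$ only via the crossing edges, which is why the connectivity argument cannot be pushed down to $G_p$ and instead requires the direct cut-set analysis of the appendix. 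The maximality check you flag as delicate is likewise left unresolved in your proposal; in the paper it is handled (somewhat tersely) by inspection of the explicit drawing.
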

\begin{proof}
First, we present the construction of the graph $G_{k}$.

Let $C_{1}=a_{1}a_{2}a_{3}a_{4}a_{1}$, 
$C_{2}=b_{1}b_{2}b_{3}\cdots b_{11}b_{12}b_{1}$
and $C_{3}=c_{1}c_{2}c_{3}\cdots c_{11}c_{12}c_{1}$
denote three distinct cycles,
where vertices with odd indices are white and vertices with even indices are black.
These cycles are referred to as the {\it base cycles} of $G_{1}$.
 
 Let
 \[
E(C_{1},C_{2}) = \{a_{1}b_{2}, a_{1}b_{12}, a_{2}b_{3}, a_{2}b_{5}, a_{3}b_{6}, a_{3}b_{8}, a_{4}b_{9}, a_{4}b_{11}\},
\]
\begin{align*}
E(C_{2},C_{3}) = \{ & b_{1}c_{2}, b_{1}c_{12}, b_{2}c_{3}, b_{3}c_{2}, b_{4}c_{3}, b_{4}c_{5}, b_{5}c_{6}, 
  b_{6}c_{5}, b_{7}c_{6}, \\ & b_{7}c_{8}, b_{8}c_{9}, b_{9}c_{8}, b_{10}c_{9}, b_{10}c_{11}, b_{11}c_{12}, b_{12}c_{11} \}
\end{align*}
and
\[
E(C_{3},C_{3}) = \{c_{1}c_{4}, c_{4}c_{7}, c_{7}c_{10}, c_{10}c_{1}\}.
\]

For $k=1$,
the graph $G_{1}=(V(G_{1}),E(G_{1}))$ is defined as illustrated in Figure \ref{graph Gk} $(1)$,
where $V(G_{1})=V(C_{1})\cup V(C_{2}) \cup V(C_{3})$ and $E(G_{1})=E(C_{1})\cup E(C_{2})\cup E(C_{3})\cup E(C_{3},C_{3}) \cup E(C_{1},C_{2})\cup E(C_{2},C_{3})$.
Clearly $G_{1}$ is a bipartite IC-plane graph with $n(G_{1})=24k+4=24+4=28$ and $e(G_{1})=\frac{13\times 28}{6}-\frac{14}{3}=56$.

For $k\geq2$, we construct $G_{k}$ recursively via the illustration given in Figure \ref{graph Gk} $(2)$:
put the entire graph $G_{k-1}$ inside the innermost $4$-face of $G_{1}$ bounded by $a_{1}a_{2}a_{3}a_{4}a_{1}$,
and merge the boundary of the outermost $4$-cycle of $G_{k-1}$ with the boundary of the innermost $4$-cycle of $G_{1}$.
The instance for $k=2$ is shown in Figure \ref{G2}.
One can verify that each iteration from $G_{k-1}$ to $G_{k}$ adds two base cycles of length twelve (i.e., $24$ vertices) and $52$ edges.
It follows that $n(G_{k})=24k+4$, $e(G_{k})=56+52(k-1)=52k+4$, and $G_{k}$ has $2k+1$ base cycles.
Therefore, $e(G_{k})=\frac{13n(G_{k})}{6}-\frac{14}{3}$.
For each $k\geq1$, $G_{k}$ is a MBICP-graph, 
since no edges from the complement of $G_{k}$ can be added without violating the assumption of $G_{k}$.
Thus, it remains to prove that $\kappa(G_k) = 4$, with the tedious details given in Appendix I.
\end{proof}

\begin{figure}[h!]
  \centering
\begin{tikzpicture}[scale=0.4]
	\begin{pgfonlayer}{nodelayer}
		\node [style=whitenode] (0) at (6.5, 1.5) {};
		\node [style=whitenode] (1) at (8.5, -0.5) {};
		\node [style=blacknode] (2) at (8.5, 1.5) {};
		\node [style=blacknode] (3) at (6.5, -0.5) {};
		\node [style=whitenode] (4) at (4.5, -0.5) {};
		\node [style=whitenode] (5) at (10.5, 1.5) {};
		\node [style=whitenode] (7) at (8.5, 3.5) {};
		\node [style=blacknode] (8) at (6.5, 3.5) {};
		\node [style=blacknode] (9) at (10.5, 3.5) {};
		\node [style=blacknode] (10) at (10.5, -0.5) {};
		\node [style=blacknode] (11) at (4.5, -2.5) {};
		\node [style=blacknode] (12) at (4.5, 1.5) {};
		\node [style=whitenode] (13) at (10.5, -2.5) {};
		\node [style=whitenode] (14) at (4.5, 3.5) {};
		\node [style=whitenode] (15) at (6.5, -2.5) {};
		\node [style=blacknode] (16) at (8.5, -2.5) {};
		\node [style=blacknode] (17) at (6.5, 5.5) {};
		\node [style=blacknode] (18) at (12.5, 5.5) {};
		\node [style=blacknode] (19) at (2.5, -4.5) {};
		\node [style=blacknode] (20) at (2.5, 1.5) {};
		\node [style=blacknode] (21) at (12.5, -0.5) {};
		\node [style=whitenode] (22) at (8.5, 5.5) {};
		\node [style=whitenode] (23) at (2.5, 5.5) {};
		\node [style=whitenode] (24) at (12.5, -4.5) {};
		\node [style=whitenode] (25) at (12.5, 1.5) {};
		\node [style=whitenode] (26) at (2.5, -0.5) {};
		\node [style=whitenode] (27) at (6.5, -4.5) {};
		\node [style=blacknode] (28) at (8.5, -4.5) {};
		\node [style=none] (29) at (7.5, 0.5) {$G_{k-1}$};
		\node [style=whitenode] (30) at (-8.5, 1.5) {};
		\node [style=whitenode] (31) at (-6.5, -0.5) {};
		\node [style=blacknode] (32) at (-6.5, 1.5) {};
		\node [style=blacknode] (33) at (-8.5, -0.5) {};
		\node [style=whitenode] (34) at (-10.5, -0.5) {};
		\node [style=whitenode] (35) at (-4.5, 1.5) {};
		\node [style=whitenode] (36) at (-6.5, 3.5) {};
		\node [style=blacknode] (37) at (-8.5, 3.5) {};
		\node [style=blacknode] (38) at (-4.5, 3.5) {};
		\node [style=blacknode] (39) at (-4.5, -0.5) {};
		\node [style=blacknode] (40) at (-10.5, -2.5) {};
		\node [style=blacknode] (41) at (-10.5, 1.5) {};
		\node [style=whitenode] (42) at (-4.5, -2.5) {};
		\node [style=whitenode] (43) at (-10.5, 3.5) {};
		\node [style=whitenode] (44) at (-8.5, -2.5) {};
		\node [style=blacknode] (45) at (-6.5, -2.5) {};
		\node [style=blacknode] (46) at (-8.5, 5.5) {};
		\node [style=blacknode] (47) at (-2.5, 5.5) {};
		\node [style=blacknode] (48) at (-12.5, -4.5) {};
		\node [style=blacknode] (49) at (-12.5, 1.5) {};
		\node [style=blacknode] (50) at (-2.5, -0.5) {};
		\node [style=whitenode ] (51) at (-6.5, 5.5) {};
		\node [style=whitenode] (52) at (-12.5, 5.5) {};
		\node [style=whitenode] (53) at (-2.5, -4.5) {};
		\node [style=whitenode] (54) at (-2.5, 1.5) {};
		\node [style=whitenode] (55) at (-12.5, -0.5) {};
		\node [style=whitenode] (56) at (-8.5, -4.5) {};
		\node [style=blacknode] (57) at (-6.5, -4.5) {};
		\node [style=none] (59) at (-7.5, -7) {$G_{1}$};
		\node [style=none] (60) at (7.5, -7) {$G_{k}$};

	\end{pgfonlayer}
	\begin{pgfonlayer}{edgelayer}
		\draw [style=rededge_thick] (0.center) to (2);
		\draw [style=rededge_thick] (2) to (1.center);
		\draw [style=rededge_thick] (0.center) to (3);
		\draw [style=rededge_thick] (3) to (1.center);
		\draw [style=grayedge] (8) to (0.center);
		\draw [style=grayedge] (7.center) to (2);
		\draw [style=blue_thick] (8) to (7.center);
		\draw [style=blue_thick] (14.center) to (8);
		\draw [style=blue_thick] (14.center) to (12);
		\draw [style=grayedge] (12) to (0.center);
		\draw [style=blue_thick] (7.center) to (9);
		\draw [style=blue_thick] (9) to (5.center);
		\draw [style=grayedge] (2) to (5.center);
		\draw [style=blue_thick] (5.center) to (10);
		\draw [style=grayedge] (1.center) to (10);
		\draw [style=blue_thick] (12) to (4.center);
		\draw [style=grayedge] (4.center) to (3);
		\draw [style=blue_thick] (4.center) to (11);
		\draw [style=blue_thick] (11) to (15.center);
		\draw [style=grayedge] (3) to (15.center);
		\draw [style=blue_thick] (15.center) to (16);
		\draw [style=grayedge] (1.center) to (16);
		\draw [style=blue_thick] (16) to (13.center);
		\draw [style=blue_thick] (10) to (13.center);
		\draw [style=grayedge] (14.center) to (17);
		\draw [style=grayedge] (20) to (4.center);
		\draw [style=grayedge] (12) to (26.center);
		\draw [style=grayedge] (14.center) to (20);
		\draw [style=blue_thick] (23.center) to (20);
		\draw [style=blue_thick] (23.center) to (17);
		\draw [style=blue_thick] (17) to (22.center);
		\draw [style=blue_thick] (22.center) to (18);
		\draw [style=blue_thick] (18) to (25.center);
		\draw [style=blue_thick] (25.center) to (21);
		\draw [style=blue_thick] (21) to (24.center);
		\draw [style=grayedge] (15.center) to (28);
		\draw [style=grayedge] (16) to (27.center);
		\draw [style=blue_thick] (28) to (24.center);
		\draw [style=blue_thick] (27.center) to (28);
		\draw [style=blue_thick] (19) to (27.center);
		\draw [style=grayedge] (11) to (27.center);
		\draw [style=grayedge] (11) to (26.center);
		\draw [style=blue_thick] (20) to (26.center);
		\draw [style=grayedge] (10) to (25.center);
		\draw [style=grayedge] (5.center) to (21);
		\draw [style=grayedge] (9) to (25.center);
		\draw [style=grayedge] (9) to (22.center);
		\draw [style=grayedge] (17) to (7.center);
		\draw [style=grayedge] (8) to (22.center);
		\draw [style=grayedge] (21) to (13.center);
		\draw [style=grayedge] (13.center) to (28);
		\draw [style=blue_thick] (26.center) to (19);
		\draw [style=grayedge, bend left] (23.center) to (18);
		\draw [style=grayedge, bend left] (18) to (24.center);
		\draw [style=grayedge, bend right] (23.center) to (19);
		\draw [style=grayedge, bend right] (19) to (24.center);
		\draw [style=blue_thick] (30.center) to (32);
		\draw [style=blue_thick] (32) to (31.center);
		\draw [style=blue_thick] (30.center) to (33);
		\draw [style=blue_thick] (33) to (31.center);
		\draw [style=grayedge] (37) to (30.center);
		\draw [style=grayedge] (36.center) to (32);
		\draw [style=blue_thick] (37) to (36.center);
		\draw [style=blue_thick] (43.center) to (37);
		\draw [style=blue_thick] (43.center) to (41);
		\draw [style=grayedge] (41) to (30.center);
		\draw [style=blue_thick] (36.center) to (38);
		\draw [style=blue_thick] (38) to (35.center);
		\draw [style=grayedge] (32) to (35.center);
		\draw [style=blue_thick] (35.center) to (39);
		\draw [style=grayedge] (31.center) to (39);
		\draw [style=blue_thick] (41) to (34.center);
		\draw [style=grayedge] (34.center) to (33);
		\draw [style=blue_thick] (34.center) to (40);
		\draw [style=blue_thick] (40) to (44.center);
		\draw [style=grayedge] (33) to (44.center);
		\draw [style=blue_thick] (44.center) to (45);
		\draw [style=grayedge] (31.center) to (45);
		\draw [style=blue_thick] (45) to (42.center);
		\draw [style=blue_thick] (39) to (42.center);
		\draw [style=grayedge] (43.center) to (46);
		\draw [style=grayedge] (49) to (34.center);
		\draw [style=grayedge] (41) to (55.center);
		\draw [style=grayedge] (43.center) to (49);
		\draw [style=blue_thick] (52.center) to (49);
		\draw [style=blue_thick] (52.center) to (46);
		\draw [style=blue_thick] (46) to (51.center);
		\draw [style=blue_thick] (51.center) to (47);
		\draw [style=blue_thick] (47) to (54.center);
		\draw [style=blue_thick] (54.center) to (50);
		\draw [style=blue_thick] (50) to (53.center);
		\draw [style=grayedge] (44.center) to (57);
		\draw [style=grayedge] (45) to (56.center);
		\draw [style=blue_thick] (57) to (53.center);
		\draw [style=blue_thick] (56.center) to (57);
		\draw [style=blue_thick] (48) to (56.center);
		\draw [style=grayedge] (40) to (56.center);
		\draw [style=grayedge] (40) to (55.center);
		\draw [style=blue_thick] (49) to (55.center);
		\draw [style=grayedge] (39) to (54.center);
		\draw [style=grayedge] (35.center) to (50);
		\draw [style=grayedge] (38) to (54.center);
		\draw [style=grayedge] (38) to (51.center);
		\draw [style=grayedge] (46) to (36.center);
		\draw [style=grayedge] (37) to (51.center);
		\draw [style=grayedge] (50) to (42.center);
		\draw [style=grayedge] (42.center) to (57);
		\draw [style=blue_thick] (55.center) to (48);
		\draw [style=grayedge, bend left] (52.center) to (47);
		\draw [style=grayedge, bend left] (47) to (53.center);
		\draw [style=grayedge, bend right] (52.center) to (48);
		\draw [style=grayedge, bend right] (48) to (53.center);
	\end{pgfonlayer}
\end{tikzpicture}
  \caption{Graph $G_{1}$ and the construction of graph $G_{k}$.}
  \label{graph Gk}
\end{figure}
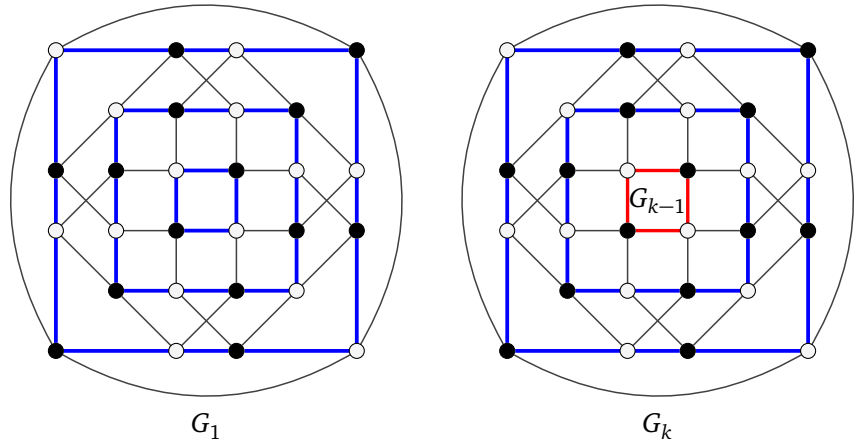

\begin{figure}[h!]
  \centering
\begin{tikzpicture}[scale=0.5]
	\begin{pgfonlayer}{nodelayer}
		\node [style=whitenode] (0) at (-0.5, 0.5) {};
		\node [style=whitenode] (1) at (0.5, -0.5) {};
		\node [style=blacknode] (2) at (0.5, 0.5) {};
		\node [style=blacknode] (3) at (-0.5, -0.5) {};
		\node [style=whitenode] (4) at (0.5, 1.5) {};
		\node [style=whitenode] (5) at (-0.5, -1.5) {};
		\node [style=blacknode] (6) at (-0.5, 1.5) {};
		\node [style=blacknode] (7) at (0.5, -1.5) {};
		\node [style=whitenode] (8) at (1.5, 0.5) {};
		\node [style=whitenode] (9) at (-1.5, -0.5) {};
		\node [style=blacknode] (10) at (-1.5, 0.5) {};
		\node [style=blacknode] (11) at (1.5, -0.5) {};
		\node [style=blacknode] (12) at (1.5, 1.5) {};
		\node [style=blacknode] (13) at (-1.5, -1.5) {};
		\node [style=whitenode] (14) at (1.5, -1.5) {};
		\node [style=whitenode] (15) at (-1.5, 1.5) {};
		\node [style=whitenode] (16) at (0.5, 2.5) {};
		\node [style=whitenode] (17) at (2.5, 0.5) {};
		\node [style=whitenode] (18) at (-0.5, -2.5) {};
		\node [style=whitenode] (19) at (-2.5, -0.5) {};
		\node [style=whitenode] (20) at (-2.5, 2.5) {};
		\node [style=whitenode] (21) at (2.5, -2.5) {};
		\node [style=blacknode] (22) at (2.5, -0.5) {};
		\node [style=blacknode] (23) at (0.5, -2.5) {};
		\node [style=blacknode] (24) at (-2.5, 0.5) {};
		\node [style=blacknode] (25) at (-0.5, 2.5) {};
		\node [style=blacknode] (26) at (2.5, 2.5) {};
		\node [style=blacknode] (27) at (-2.5, -2.5) {};
		\node [style=whitenode] (40) at (0.5, 3.5) {};
		\node [style=whitenode] (41) at (3.5, 0.5) {};
		\node [style=whitenode] (42) at (-0.5, -3.5) {};
		\node [style=whitenode] (43) at (-3.5, -0.5) {};
		\node [style=blacknode] (44) at (-3.5, 0.5) {};
		\node [style=blacknode] (45) at (-0.5, 3.5) {};
		\node [style=blacknode] (46) at (3.5, -0.5) {};
		\node [style=blacknode] (47) at (0.5, -3.5) {};
		\node [style=blacknode] (48) at (3.5, 3.5) {};
		\node [style=blacknode] (49) at (-3.5, -3.5) {};
		\node [style=whitenode] (50) at (-3.5, 3.5) {};
		\node [style=whitenode] (51) at (3.5, -3.5) {};
		\node [style=blacknode] (52) at (-0.5, 4.5) {};
		\node [style=blacknode] (53) at (4.5, -0.5) {};
		\node [style=blacknode] (54) at (0.5, -4.5) {};
		\node [style=blacknode] (55) at (-4.5, -4.5) {};
		\node [style=blacknode] (56) at (-4.5, 0.5) {};
		\node [style=whitenode] (57) at (0.5, 4.5) {};
		\node [style=whitenode] (58) at (-4.5, 4.5) {};
		\node [style=whitenode] (59) at (4.5, 0.5) {};
		\node [style=whitenode] (60) at (4.5, -4.5) {};
		\node [style=whitenode] (61) at (-0.5, -4.5) {};
		\node [style=whitenode] (62) at (-4.5, -0.5) {};
		\node [style=blacknode] (63) at (4.5, 4.5) {};

	\end{pgfonlayer}
	\begin{pgfonlayer}{edgelayer}
		\draw [style=blue_thick] (0.center) to (2);
		\draw [style=blue_thick] (2) to (1.center);
		\draw [style=blue_thick] (3) to (1.center);
		\draw [style=blue_thick] (0.center) to (3);
		\draw [style=blue_thick] (6) to (4.center);
		\draw [style=blue_thick] (4.center) to (12);
		\draw [style=blue_thick] (12) to (8.center);
		\draw [style=blue_thick] (8.center) to (11);
		\draw [style=blue_thick] (11) to (14.center);
		\draw [style=blue_thick] (7) to (14.center);
		\draw [style=blue_thick] (5.center) to (7);
		\draw [style=blue_thick] (13) to (5.center);
		\draw [style=blue_thick] (9.center) to (13);
		\draw [style=blue_thick] (10) to (9.center);
		\draw [style=blue_thick] (15.center) to (10);
		\draw [style=blue_thick] (15.center) to (6);
		\draw [style=grayedge] (6) to (0.center);
		\draw [style=grayedge] (4.center) to (2);
		\draw [style=grayedge] (2) to (8.center);
		\draw [style=grayedge] (9.center) to (3);
		\draw [style=grayedge] (10) to (0.center);
		\draw [style=grayedge] (1.center) to (11);
		\draw [style=grayedge] (1.center) to (7);
		\draw [style=grayedge] (3) to (5.center);
		\draw [style=blue_thick] (20.center) to (25);
		\draw [style=blue_thick] (25) to (16.center);
		\draw [style=blue_thick] (16.center) to (26);
		\draw [style=blue_thick] (26) to (17.center);
		\draw [style=blue_thick] (17.center) to (22);
		\draw [style=blue_thick] (22) to (21.center);
		\draw [style=blue_thick] (23) to (21.center);
		\draw [style=blue_thick] (18.center) to (23);
		\draw [style=blue_thick] (27) to (18.center);
		\draw [style=blue_thick] (19.center) to (27);
		\draw [style=blue_thick] (24) to (19.center);
		\draw [style=blue_thick] (20.center) to (24);
		\draw [style=grayedge] (25) to (4.center);
		\draw [style=grayedge] (16.center) to (6);
		\draw [style=grayedge] (15.center) to (25);
		\draw [style=grayedge] (12) to (16.center);
		\draw [style=grayedge] (12) to (17.center);
		\draw [style=grayedge] (8.center) to (22);
		\draw [style=grayedge] (17.center) to (11);
		\draw [style=grayedge] (23) to (5.center);
		\draw [style=grayedge] (7) to (18.center);
		\draw [style=grayedge] (10) to (19.center);
		\draw [style=grayedge] (24) to (9.center);
		\draw [style=grayedge] (15.center) to (24);
		\draw [style=grayedge] (13) to (19.center);
		\draw [style=grayedge] (13) to (18.center);
		\draw [style=grayedge] (14.center) to (22);
		\draw [style=grayedge] (23) to (14.center);
		\draw [style=blue_thick] (50.center) to (45);
		\draw [style=blue_thick] (45) to (40.center);
		\draw [style=blue_thick] (40.center) to (48);
		\draw [style=blue_thick] (48) to (41.center);
		\draw [style=blue_thick] (41.center) to (46);
		\draw [style=blue_thick] (46) to (51.center);
		\draw [style=blue_thick] (47) to (51.center);
		\draw [style=blue_thick] (42.center) to (47);
		\draw [style=blue_thick] (49) to (42.center);
		\draw [style=blue_thick] (43.center) to (49);
		\draw [style=blue_thick] (44) to (43.center);
		\draw [style=blue_thick] (50.center) to (44);
		\draw [style=grayedge] (20.center) to (45);
		\draw [style=grayedge] (44) to (20.center);
		\draw [style=grayedge] (26) to (40.center);
		\draw [style=grayedge] (26) to (41.center);
		\draw [style=grayedge] (27) to (43.center);
		\draw [style=grayedge] (27) to (42.center);
		\draw [style=grayedge] (21.center) to (47);
		\draw [style=grayedge] (21.center) to (46);
		\draw [style=rededge_thick, bend left=15] (20.center) to (26);
		\draw [style=rededge_thick, bend right=15] (20.center) to (27);
		\draw [style=rededge_thick, bend left=15] (26) to (21.center);
		\draw [style=rededge_thick, bend right=15] (27) to (21.center);
		\draw [style=blue_thick] (54) to (60.center);
		\draw [style=blue_thick] (53) to (60.center);
		\draw [style=blue_thick] (59.center) to (53);
		\draw [style=blue_thick] (63) to (59.center);
		\draw [style=blue_thick] (57.center) to (63);
		\draw [style=blue_thick] (52) to (57.center);
		\draw [style=blue_thick] (58.center) to (52);
		\draw [style=blue_thick] (58.center) to (56);
		\draw [style=blue_thick] (56) to (62.center);
		\draw [style=blue_thick] (62.center) to (55);
		\draw [style=blue_thick] (55) to (61.center);
		\draw [style=blue_thick] (61.center) to (54);
		\draw [style=grayedge] (45) to (57.center);
		\draw [style=grayedge] (52) to (40.center);
		\draw [style=grayedge] (41.center) to (53);
		\draw [style=grayedge] (59.center) to (46);
		\draw [style=grayedge] (48) to (57.center);
		\draw [style=grayedge] (48) to (59.center);
		\draw [style=grayedge] (51.center) to (53);
		\draw [style=grayedge] (51.center) to (54);
		\draw [style=grayedge] (42.center) to (54);
		\draw [style=grayedge] (47) to (61.center);
		\draw [style=grayedge] (49) to (62.center);
		\draw [style=grayedge] (49) to (61.center);
		\draw [style=grayedge] (43.center) to (56);
		\draw [style=grayedge] (44) to (62.center);
		\draw [style=grayedge] (50.center) to (56);
		\draw [style=grayedge] (50.center) to (52);
		\draw [style=grayedge, bend left] (58.center) to (63);
		\draw [style=grayedge, bend right] (58.center) to (55);
		\draw [style=grayedge, bend right] (55) to (60.center);
		\draw [style=grayedge, bend right] (60.center) to (63);
	\end{pgfonlayer}
\end{tikzpicture}

  \caption{The graph $G_{2}$}
  \label{G2}
\end{figure}
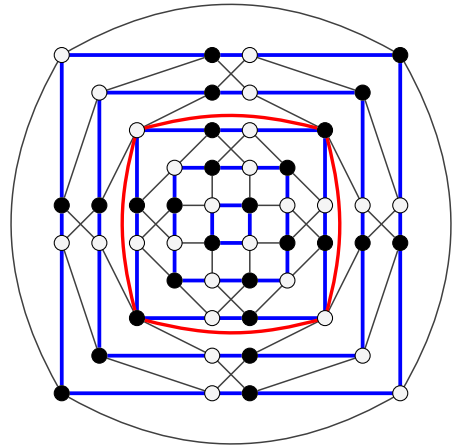

We propose the following conjecture based on the construction in Theorem \ref{4-connected extremal graph}.

\begin{conjecture}
Let $G$ be a  MBICP-graph with $\kappa(G)=4$. 
Then $e(G)\geq\frac{13n(G)}{6}-\frac{14}{3}$.
\end{conjecture}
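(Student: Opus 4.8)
The plan is to reduce the conjecture to a lower bound on the number of crossings and then attack that bound by discharging, with the understanding that the essential difficulty is to make genuine use of $4$-connectivity rather than merely the minimum-degree condition $\delta(G)\ge 4$ that it entails. Let $D$ be the drawing of $G$. Since $\kappa(G)=4\ge 3$, Proposition \ref{3-connected property}(iv) tells us that the plane graph $G_{p}$ obtained by deleting one edge from each crossing pair is a quadrangulation, so $e(G_{p})=2n(G)-4$; as exactly one edge is deleted per crossing, $e(G)=e(G_{p})+cr(D)=2n(G)-4+cr(D)$. Hence the conjectured inequality $e(G)\ge\frac{13}{6}n(G)-\frac{14}{3}$ is \emph{equivalent} to the crossing bound
\[
cr(D)\ \ge\ \frac{n(G)-4}{6}.
\]
This exactly matches the extremal family of Theorem \ref{4-connected extremal graph}, where $cr=4k$ and $n=24k+4$, so every crossing must on average account for precisely six vertices.

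Next I would pass to the plane bipartite graph $G_{\mathrm{clean}}$ consisting of the clean edges of $G$. Because each crossing lies inside a clean $6$-cycle (Proposition \ref{3-connected property}(iii)), deleting both edges of a crossing pair merges the four faces surrounding that crossing into a single hexagonal face, while every true $4$-face in $\mathcal{D}_{4}$ survives unchanged; in particular $G_{\mathrm{clean}}$ is connected, since all the $6$-cycles are clean. A short Euler computation then gives that $G_{\mathrm{clean}}$ has exactly $cr(D)$ hexagonal faces and $n(G)-2-2\,cr(D)$ quadrilateral faces, so the target is equivalent to
\[
q\ \le\ 4\,cr(D)+2,
\]
where $q$ is the number of quadrilateral faces of $G_{\mathrm{clean}}$. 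Informally, the conjecture asserts that the quadrilateral faces cannot greatly outnumber the hexagonal ones.

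The main step would be a discharging argument on $G_{\mathrm{clean}}$ (or on $G^{\times}$) that routes the charge of each quadrilateral face to a nearby hexagon with multiplicity at most $4$, leaving only a bounded surplus of $+2$. Here lies the crux, and the reason the statement remains open: the plain Euler/handshake bookkeeping is \emph{not} enough. Assigning $\deg(v)-4$ to true vertices and $|F|-4$ to faces of $G^{\times}$ and summing to $-8$ yields only $\sum_{v}(\deg_{G}(v)-4)=2\,cr(D)-8$, whence $cr(D)\ge 4$; a few high-degree vertices can absorb arbitrarily much deficiency, so local charge redistribution by itself cannot detect the linear-in-$n$ bound. The genuine leverage must come from $4$-connectivity proper: a large clean quadrangulated region is ``grid-like'' and should inevitably contain a $3$-cut or a vertex whose degree cannot be repaired by the available crossing edges, which may only be inserted inside hexagons. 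Thus the hard part is a structural separation lemma — every quadrilateral face lies within bounded combinatorial distance of a hexagon, and no hexagon is charged by more than four quadrilaterals — proved via the absence of $3$-cuts.

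An alternative would be to mimic the inductive splittings used for the $2$- and $3$-connected cases (Theorems \ref{2-connected} and \ref{3-connected}), cutting $G$ along a clean $6$-cycle that bounds a crossing and recursing on the two pieces. The obstruction there is that the pieces need not remain $4$-connected, so the induction hypothesis fails to transfer; maintaining $\kappa\ge 4$ under such surgery is exactly the point that seems to break down. In short, I expect the separation lemma controlling the interaction between high-degree vertices and $4$-connectivity — rather than the bookkeeping — to be where the real work lies, and this is precisely what defeats the naive counting and keeps the bound conjectural.
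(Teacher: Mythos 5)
You have not proved the statement, and you say so yourself --- but it is worth being explicit that the paper does not prove it either: this is stated as a \emph{conjecture}, and the only evidence the paper supplies is the extremal family $G_k$ of Theorem \ref{4-connected extremal graph} showing that the bound, if true, would be attained with $n=24k+4$ and $cr=4k$. So there is no ``paper's own proof'' to compare against. Your preliminary reductions are correct and go somewhat beyond what the paper records. Since $\kappa(G)\ge 3$, Proposition \ref{3-connected property}(iv) gives $e(G)=2n(G)-4+cr(D)$, so the conjecture is indeed equivalent to $cr(D)\ge\frac{n(G)-4}{6}$; and your Euler computation on the clean subgraph checks out ($G_{\mathrm{clean}}$ has $2n-4-cr(D)$ edges, hence $n-2-cr(D)$ faces, of which $cr(D)$ are hexagons and $n-2-2cr(D)$ are quadrilaterals, consistent with the handshake identity), so the target is equivalent to $q\le 4\,cr(D)+2$. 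Your diagnosis of why the trivial charge count only yields $cr(D)\ge 4$ is also accurate: $\delta(G)\ge 4$ alone gives $\sum_v(\deg_G(v)-4)=2\,cr(D)-8\ge 0$ and nothing linear in $n$.

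The gap is the one you name: the ``structural separation lemma'' --- that every quadrilateral face of $G_{\mathrm{clean}}$ can be charged to a nearby hexagon with multiplicity at most $4$, using $4$-connectivity to forbid large crossing-free quadrangulated regions --- is only conjectured, not established, and it is exactly as hard as the original statement. Be careful also with one intermediate assertion: you would need to verify that a large clean quadrangulated patch genuinely forces a $3$-cut or an unrepairable low-degree vertex; quadrangulations can have all degrees $\ge 4$ and be $4$-connected (e.g.\ grid-like quadrangulations of the torus restricted to the plane have boundary issues, but the pseudo double wheel $W_n$ shows quadrangulations themselves can be $3$-connected with few crossings available), so the obstruction must come from the interaction of $4$-connectivity with bipartite IC-maximality, not from the quadrangulation structure alone. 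Your remark that the inductive surgery of Theorems \ref{2-connected} and \ref{3-connected} fails to preserve $4$-connectivity is a fair account of why that route breaks down. In short: the reductions are sound and would be a reasonable first section of an eventual proof, but the statement remains open both in your write-up and in the paper.
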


\section*{Appendix I}\label{appendix}

{\small 
\noindent In the following, we  prove that $\kappa(G_{k})=4$ in Theorem \ref{4-connected extremal graph}. 

Two base cycles $C_{i}$ and $C_{j}$ of $G_{k}$ are called {\it adjacent} if $e(V(C_{i}),V(C_{j}))>0$.
Two subgraphs $H_{1}$ and $H_{2}$ of $G_{k}$ are called {\it linked} if $e(V(H_{1}),V(H_{2}))>0$.
A vertex $u\in V(C_{i})$ is a {\it direct join-vertex} of $G_{k}$ if $e(u,V(C_{j}))>0$,  
where $C_{j}$ is an adjacent base cycle of $C_{i}$;
otherwise, $u$ is an {\it undirect join-vertex} of $G_{k}$.
Observe that in $G_{k}$, only exist four undirect join-vertices, which are the four vertices on the boundary of the outermost $4$-cycle of $G_{k}$,
while all other vertices of $G_{k}$ are direct join-vertices.

\begin{observation}\label{rule}
Let $C_{i}$ and $C_{j}$ be two base cycles of $G_{k}$, if $|i-j|=1$, then,
\begin{itemize}
  \item[(i)] $e(V(C_{i}),V(C_{j}))\geq8$;
  \item[(ii)] for any vertex $u\in  V(C_{i})$, $e(u,V(C_{j}))\leq2$; and
  \item[(iii)] for each $i\in[1,2k+1]$, if $H$ is a component of $C_{i}\setminus S$ with $v(H)\geq3$, then $H$ is linked to all adjacent base cycles of $C_{i}$.
\end{itemize} 
\end{observation}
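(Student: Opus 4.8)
The plan is to reduce the whole statement to two prototype layers and then exploit the self-similarity of the construction. First I would record that, by the recursive definition of $G_{k}$, every passage from $G_{k-1}$ to $G_{k}$ wraps a fresh copy of $G_{1}$ around the current graph and identifies the innermost $4$-cycle $a_{1}a_{2}a_{3}a_{4}$ of the new copy with the outermost chord-$4$-cycle $c_{1}c_{4}c_{7}c_{10}$ of $G_{k-1}$. Hence every adjacent pair of base cycles $C_{i},C_{i+1}$ is joined by a copy of exactly one of two edge sets: a \emph{type-A} layer, a copy of $E(C_{1},C_{2})$ joining a (chord-)$4$-cycle to a full $12$-cycle, or a \emph{type-B} layer, a copy of $E(C_{2},C_{3})$ joining two $12$-cycles. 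Reading from the inside out, the layers of $G_{k}$ are $A,B,A,B,\dots,A,B$, so that $L_{m}$ is of type A precisely when $m$ is odd.

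Parts (i) and (ii) then follow by inspecting these two prototypes. Since $|E(C_{1},C_{2})|=8$ and $|E(C_{2},C_{3})|=16$, every adjacent pair carries at least $8$ cross-layer edges, which is (i). For (ii), in each prototype every vertex meets at most two cross-layer edges: the degree-$2$ endpoints are $a_{1},\dots,a_{4}$ in type A and $b_{1},b_{4},b_{7},b_{10}$ in type B, while all remaining endpoints have degree $1$. Hence $e(u,V(C_{j}))\le 2$ for every $u\in V(C_{i})$ and every adjacent $C_{j}$.

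For (iii) I would first rephrase the conclusion: a component $H$ of $C_{i}\setminus S$ is a maximal arc of consecutive vertices of the cycle $C_{i}$, so the assertion ``$H$ is linked to $C_{j}$'' is equivalent to ``$H$ contains a vertex joined to $C_{j}$''. It therefore suffices to prove that, along $C_{i}$, the longest run of vertices having no edge to a fixed adjacent cycle $C_{j}$ has length at most $2$; then any arc on at least three consecutive vertices meets a joined vertex. Reading off the prototypes, the vertices of a $12$-cycle that are joined across a layer are: all twelve on the $C_{2}$-side of a type-B layer (runs of length $0$); the eight vertices $c_{2},c_{3},c_{5},c_{6},c_{8},c_{9},c_{11},c_{12}$ on the $C_{3}$-side of a type-B layer and on the $12$-cycle side of a type-A layer, leaving the isolated gaps $c_{1},c_{4},c_{7},c_{10}$ (runs of length $1$); and only the four vertices $c_{1},c_{4},c_{7},c_{10}$ on the chord-$4$-cycle side of a type-A layer, leaving the four gaps $c_{2}c_{3},\,c_{5}c_{6},\,c_{8}c_{9},\,c_{11}c_{12}$ of length exactly $2$. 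A $12$-cycle $C_{i}$ with $2\le i\le 2k$ is the outer cycle of layer $L_{i-1}$ and the inner cycle of layer $L_{i}$, so distinguishing $i$ even from $i$ odd shows that against either neighbour its longest unjoined run is at most $2$; the extremal cycles $C_{1}$ and $C_{2k+1}$ each have a single neighbour and are handled the same way. Consequently every component $H$ of $C_{i}\setminus S$ with $v(H)\ge 3$ is linked to all adjacent base cycles, proving (iii).

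The difficulty here is purely organisational rather than conceptual: one must verify that the chord-$4$-cycle of an outer $12$-cycle, once merged into the next copy of $G_{1}$, really plays the role of the $4$-cycle side of a type-A layer, since this is exactly what forces the tight gaps of length $2$ and thereby explains why the hypothesis is $v(H)\ge 3$ rather than $v(H)\ge 2$. I would therefore isolate the self-similarity statement once and let a single inspection of the two prototypes settle all layers simultaneously, rather than arguing cycle by cycle.
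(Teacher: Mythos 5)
Your proposal is correct, and it actually supplies more than the paper does: the paper states this as an \emph{Observation} with no proof at all, implicitly treating it as a direct inspection of the construction of $G_{k}$. Your layer decomposition (type-A layers of $8$ edges emanating from a (chord-)$4$-cycle, type-B layers of $16$ edges between two $12$-cycles, alternating $A,B,A,B,\dots$ from the inside out) is exactly the right way to organize that inspection, and your reduction of (iii) to the statement that the longest run of consecutive vertices of $C_{i}$ with no edge into a fixed neighbour $C_{j}$ has length at most $2$ correctly identifies the tight case (the gaps $c_{2}c_{3},\,c_{5}c_{6},\,c_{8}c_{9},\,c_{11}c_{12}$ on the inner side of a type-A layer), which is precisely why the hypothesis is $v(H)\ge 3$. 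One small inaccuracy in your verification of (ii): in a type-B layer the eight joined vertices on the outer ($C_{3}$) side, namely $c_{2},c_{3},c_{5},c_{6},c_{8},c_{9},c_{11},c_{12}$, each receive \emph{two} cross-layer edges (e.g.\ $c_{2}$ meets $b_{1}c_{2}$ and $b_{3}c_{2}$), so the degree-$2$ endpoints are not only $b_{1},b_{4},b_{7},b_{10}$; this does not affect the bound $e(u,V(C_{j}))\le 2$, nor your use of these vertices in (iii), but the enumeration should be corrected.
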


Given $S\subset V(G)$ with $|S|\leq3$. 
To prove $\kappa(G)=4$, we use the following fact.

\begin{observation}\label{c2k+1}
If $|S|\leq3$, then each component of $G_{k}[V(C_{2k+1})\backslash S]$ contains at least one direct join-vertex.
\end{observation}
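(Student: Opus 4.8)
The plan is to exploit the very specific local structure of the outermost base cycle $C_{2k+1}$, which by construction (see Figure~\ref{graph Gk}) is a $12$-cycle carrying, as chords, the four edges of the outermost $4$-cycle of $G_k$. Label its vertices cyclically so that the four undirect join-vertices are $u_1,u_2,u_3,u_4$ --- precisely the four vertices lying on the outermost $4$-cycle --- while the remaining eight vertices are direct join-vertices. The only edges of $G_k$ with both ends in $V(C_{2k+1})$ are the twelve cycle-edges together with the four chords $u_1u_2,u_2u_3,u_3u_4,u_4u_1$; hence, in the induced graph $G_k[V(C_{2k+1})]$, each $u_i$ has exactly four neighbours: its two cycle-neighbours, both of which are direct join-vertices, and its two chord-neighbours $u_{i-1},u_{i+1}$ (indices mod $4$). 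The crucial arithmetic fact I would record first is that the four pairs of cycle-neighbours of $u_1,u_2,u_3,u_4$ are pairwise disjoint, since consecutive undirect join-vertices are separated by two distinct direct vertices along $C_{2k+1}$, so that no two of the $u_i$ can share a cycle-neighbour.

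Granting this, I would argue by contradiction. Suppose some component $K$ of $G_k[V(C_{2k+1})\setminus S]$ contains no direct join-vertex; then $V(K)$ is a nonempty subset of $\{u_1,u_2,u_3,u_4\}\setminus S$. Since $K$ is a full component, every neighbour of a vertex of $K$ that does not itself lie in $K$ must belong to $S$. If $|V(K)|=1$, say $V(K)=\{u_i\}$, then all four neighbours of $u_i$ lie in $S$, forcing $|S|\ge 4$. If $|V(K)|\ge 2$, then $V(K)$ contains at least two of the $u_i$; each of these contributes its two cycle-neighbours (direct join-vertices, none of which lie in $K$) to $S$, and by the disjointness just noted these give at least $2\cdot 2=4$ distinct vertices of $S$, again forcing $|S|\ge 4$. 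In either case this contradicts the hypothesis $|S|\le 3$, so no such component exists, and therefore every component of $G_k[V(C_{2k+1})\setminus S]$ contains a direct join-vertex.

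The argument is essentially a neighbourhood-counting contradiction, so there is no serious analytic obstacle; the only point requiring genuine care is the verification of the local picture of $C_{2k+1}$ --- namely that the four undirect join-vertices are mutually non-adjacent on the cycle (so that their cycle-neighbour pairs are disjoint) and that the outermost $4$-cycle supplies the only chords inside $V(C_{2k+1})$. This is exactly the structure displayed for $G_1$ in Figure~\ref{graph Gk} and preserved verbatim by the recursive construction, and I would spell it out explicitly (reading off the chord set and the endpoints of the edges joining $C_{2k+1}$ to $C_{2k}$) before invoking it, since the whole proof hinges on the disjointness of those neighbour pairs together with the numerical gap $4>3$.
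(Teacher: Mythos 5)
Your proof is correct and follows essentially the same route as the paper's: the paper likewise observes that each undirect join-vertex has degree four with two distinct direct join-vertex neighbours on $C_{2k+1}$, and concludes that a component consisting solely of undirect join-vertices would force $|S|\ge 4$. Your version merely makes explicit the case split and the disjointness of the cycle-neighbour pairs, which the paper leaves implicit.
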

\begin{proof}
Since there are four undirect join-vertices and eight direct join-vertices on $C_{2k+1}$. 
Note that each undirect join-vertex has degree four and is adjacent to two distinct direct join-vertices of $V(C_{2k+1})$.
Thus, no component of $G_{k}[V(C_{2k+1})\setminus S]$ consists solely of undirect join-vertices, as this would require $|S|\geq4$.
Therefore, each component $H$ of $G_{k}[V(C_{2k+1})\setminus S]$ contains at least one direct join-vertex such that $e(V(H),V(C_{2k}))>0$.
\end{proof}

Let $t_{i}=|V(C_{i})\cap S|$ for $1\leq i\leq 2k+1$.
Then there exists at least one component $H$ of $C_{i}\backslash S$ such that $v(H)\geq \frac{12-t_{i}}{t_{i}}$.

Now, we will prove $G_{k}$ is $4$-connected by showing the following claims.

\setcounter{claim}{0}
\renewcommand{\thecase}{\arabic{claim}}
\begin{claim}\label{claim1 in theorem 6.1}
If $t_{i}=3$, where $i\in [1,2k+1]$.
Then $G_{k}\backslash S$ is connected.
\end{claim}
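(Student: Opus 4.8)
The plan is to use that $t_i = 3$ together with $|S| \le 3$ forces $S \subseteq V(C_i)$, so that every base cycle other than $C_i$ survives untouched in $G_k - S$. I would group these surviving cycles into the inner block $A = C_1 \cup \cdots \cup C_{i-1}$ and the outer block $B = C_{i+1} \cup \cdots \cup C_{2k+1}$. Each $C_j$ is a connected cycle, and by Observation~\ref{rule}(i) consecutive base cycles are joined by at least eight edges, so chaining these adjacencies shows that $A$ and $B$ are each connected whenever nonempty. It then suffices to show that $A$ and $B$ end up in one component of $G_k-S$ and that every surviving arc of $C_i - S$ attaches to $A \cup B$.

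The heart of the argument is the reconnection across $C_i - S$. When $C_i$ is one of the twelve-vertex base cycles, deleting the three vertices of $S$ from the $12$-cycle leaves at most three arcs totalling $12 - 3 = 9$ vertices, so by pigeonhole at least one arc $H_0$ satisfies $v(H_0) \ge 3$ (this is the estimate $v(H) \ge \frac{12 - t_i}{t_i}$ with $t_i = 3$). By Observation~\ref{rule}(iii), $H_0$ is linked to every base cycle adjacent to $C_i$; for an interior index $2 \le i \le 2k$ this means $H_0$ meets both $C_{i-1} \subseteq A$ and $C_{i+1} \subseteq B$, so $A \cup H_0 \cup B$ is connected.

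It remains to attach the remaining short arcs of $C_i - S$. For any interior $C_i$ every vertex is a direct join-vertex, since the only undirect join-vertices of $G_k$ lie on the outermost cycle $C_{2k+1}$; hence each surviving vertex, and therefore each short arc, carries an edge to $C_{i-1}$ or $C_{i+1}$, i.e.\ to $A$ or $B$, and so joins the connected piece built above. The two boundary indices use the same bridging idea with one block empty: if $i = 1$ then $A = \varnothing$ and $C_1 - S$ is the single surviving vertex of the innermost four-cycle $a_1a_2a_3a_4$, which is a direct join-vertex and attaches to $C_2 \subseteq B$; if $i = 2k+1$ then $B = \varnothing$ and, by Observation~\ref{c2k+1}, every component of $C_{2k+1} - S$ contains a direct join-vertex, which attaches to $C_{2k} \subseteq A$. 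In each case $G_k - S$ is connected, which is exactly the claim.

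The main obstacle I anticipate is bookkeeping at the two boundary cycles rather than any genuine difficulty. The innermost cycle $C_1$ has only four vertices, so the generic "$\ge 3$-vertex arc" estimate degenerates and must be replaced by the direct-join-vertex observation, while the outermost cycle $C_{2k+1}$ carries the four undirect join-vertices, which is precisely what Observation~\ref{c2k+1} is built to handle. The single point where the explicit edge patterns $E(C_i, C_{i\pm1})$ are really needed is in guaranteeing that no surviving vertex of $C_i$ can be isolated, i.e.\ that every arc of $C_i - S$, however short, retains an edge to an adjacent intact cycle; this is encapsulated by the direct/undirect dichotomy and by Observation~\ref{rule}(ii)--(iii).
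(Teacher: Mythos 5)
Your proposal is correct and follows essentially the same route as the paper's own proof: the same case split on $i=1$, $i\in[2,2k]$, and $i=2k+1$, the same pigeonhole estimate $v(H)\ge\frac{12-t_i}{t_i}=3$ combined with Observation~\ref{rule}(iii) to bridge $C_{i-1}$ and $C_{i+1}$ through one large arc, the same direct-join-vertex argument to attach the remaining short arcs, and Observation~\ref{c2k+1} for the outermost cycle. The only cosmetic difference is that you make the decomposition into the inner and outer blocks $A$ and $B$ explicit, which the paper leaves implicit.
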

\begin{proof}
If $i=1$, then $t_{1}=|V(C_{1})\cap S|=3$.
For any vertex $u\in V(C_{1})$, we have $e(u,V(C_{2}))=2$, then the conclusion holds.

If $i\in [2,2k]$, then $G_{k}[V(C_{i})\backslash S]$ has at most three components,  
with at least one component $H$ satisfying $v(H)\geq\frac{12-3}{3}\geq3$.
By Observation \ref{rule} $(iii)$, $H$ is linked to $C_{i-1}$ and $C_{i+1}$,
i.e., $e(V(H),V(C_{i-1}))>0$ and $e(V(H),V(C_{i+1}))>0$.
Since every vertex on $C_{i}$ is a direct join-vertex, 
each remaining component of $G_{k}[V(C_{i})\setminus S]$ is linked to at least one adjacent base cycle of $C_{i}$.
Thus, graph $G_{k}[(V(C_{i-1})\cup V(C_{i}) \cup V(C_{i+1}))\setminus S]$ is connected.
Furthermore, graphs $G_{k}[V(C_{1})\cup V(C_{2})\cup \cdots \cup V(C_{i-1})]$ and $G_{k}[V(C_{i+1})\cup V(C_{i+2})\cup \cdots \cup V(C_{2k+1})]$ are connected,
implying $G_{k}\backslash S$ is connected.

If $i=2k+1$, by Observation \ref{c2k+1},
each component $H$ of $G_{k}[V(C_{2k+1})\setminus S]$ contains at least one direct join-vertex such that $e(V(H),V(C_{2k}))>0$.
It follows that $G_{k}[(V(C_{2k+1})\cup V(C_{2k}))\setminus S]$ is connected.
Moreover, graph $G_{k}[V(C_{1})\cup V(C_{2})\cup \cdots \cup V(C_{2k})]$ is connected, then $G_{k}\backslash S$ is connected.
Hence Claim \ref{claim1 in theorem 6.1} holds.
\end{proof}

\begin{claim}\label{claim2 in theorem 6.1}
If there exist two integers $p,q\in [1,2k+1]$ such that $t_{p}+t_{q}=3$, where $t_{p}$ and $t_{q}$ are positive integers.
Then $G_{k}\backslash S$ is connected.
\end{claim}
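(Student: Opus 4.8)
The plan is to mirror the strategy of Claim \ref{claim1 in theorem 6.1}: I would build a single connected ``backbone'' meeting every base cycle and then show that every remaining vertex of $G_{k}\backslash S$ attaches to it. Since $t_{p}+t_{q}=3$ with $t_{p},t_{q}\ge 1$, we have $|S|=3$, $S\subseteq V(C_{p})\cup V(C_{q})$, and $\{t_{p},t_{q}\}=\{1,2\}$, so every base cycle other than $C_{p}$ and $C_{q}$ is intact. The three ingredients I would use repeatedly are Observation \ref{rule}(i) (at least eight edges between consecutive cycles), Observation \ref{rule}(iii) (any component of order at least three is linked to all its neighbouring cycles), and Observation \ref{rule}(ii) together with $|S|=3$ to control the few small components. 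By symmetry I set $t_{p}=2$ and $t_{q}=1$.

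First I would describe the remnants of the two broken cycles. Deleting the single vertex of $S$ on $C_{q}$ leaves one path $H_{q}$ with $v(H_{q})\ge 3$ (eleven vertices if $C_{q}$ is a twelve-cycle, three if $C_{q}=C_{1}$), so by Observation \ref{rule}(iii) $H_{q}$ is linked to every cycle adjacent to $C_{q}$. Deleting the two vertices of $S$ on $C_{p}$ leaves at most two arcs whose orders sum to $10$ (or to $2$ if $C_{p}=C_{1}$); the larger arc $H_{p}$ has $v(H_{p})\ge 5$ (respectively $\ge 1$), and when $v(H_{p})\ge 3$ it likewise links to both neighbours of $C_{p}$ by Observation \ref{rule}(iii). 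Concatenating the intact cycles with $H_{p}$ and $H_{q}$, invoking Observation \ref{rule}(i) along each consecutive pair of intact cycles and Observation \ref{rule}(iii) at each broken cycle, I obtain a connected backbone spanning $C_{1},\dots,C_{2k+1}$ whenever $C_{p}$ and $C_{q}$ are non-adjacent; the adjacent case $|p-q|=1$ is deferred to the end.

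Next I would attach the at most one ``small'' arc $H'$ of $C_{p}\backslash S$ with $v(H')\le 2$, which can arise only when the two deleted vertices of $C_{p}$ lie at cyclic distance two or three (or opposite on $C_{1}$). In the non-adjacent case both neighbours $C_{p-1}$ and $C_{p+1}$ are intact, so every edge leaving $H'$ survives; since each vertex of $H'$ other than the four undirect join-vertices is a direct join-vertex, it sends an edge into $C_{p-1}$ or $C_{p+1}$, and $H'$ is thereby connected to the backbone. When $C_{p}=C_{2k+1}$, so that $H'$ may contain undirect join-vertices, I would instead apply Observation \ref{c2k+1}, which guarantees that each component of $G_{k}[V(C_{2k+1})\backslash S]$ contains a direct join-vertex linking it to $C_{2k}$.

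The hard part will be the case of adjacent broken cycles, $|p-q|=1$. Here a small arc $H'$ of $C_{p}$ may have all of its surviving cross-edges directed into $C_{q}$, which is itself broken, so the path to the backbone must be routed through the large remnant $H_{q}$ rather than through an intact neighbour; one must check against the explicit edge sets $E(C_{i},C_{i+1})$ that the single deleted vertex of $C_{q}$ does not lie on every such edge, which I expect to follow from Observation \ref{rule}(ii) (each vertex carries at most two edges to a fixed neighbouring cycle) combined with $|S|=3$. The degenerate boundary instances $C_{p}=C_{1}$ (a four-cycle, where $t_{p}=2$ can leave two isolated vertices) and $q=2k+1$ likewise require direct verification from the incidence lists of Theorem \ref{4-connected extremal graph}. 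These concrete checks are exactly the tedious case analysis, but in each configuration the edge abundance guaranteed by Observation \ref{rule}(i)--(iii) forces a surviving route to the backbone, so that $G_{k}\backslash S$ is connected.
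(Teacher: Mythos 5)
Your proposal takes essentially the same route as the paper: split on whether $C_{p}$ and $C_{q}$ are adjacent, use Observation~\ref{rule}(iii) to hook every remnant of order at least three to its neighbouring base cycles, use Observation~\ref{c2k+1} for the outermost cycle, and then attach the at most one small arc of the doubly-hit cycle. The one step you leave open --- the adjacent case, where a small arc of the cycle meeting $S$ twice might send its cross-edges only into the other broken cycle, which you propose to settle by checking the explicit edge sets $E(C_{i},C_{i+1})$ --- is exactly where the paper avoids incidence-list verification: it notes that the singly-hit cycle leaves a single path of order at least three (hence already in the backbone by Observation~\ref{rule}(iii)), that the small component of the doubly-hit cycle contains a direct join-vertex, and that by Observation~\ref{rule}(i)--(ii) the two broken remnants still satisfy $e(V(C_{p})\backslash S,\,V(C_{q})\backslash S)\geq 8-2t_{p}-2t_{q}\geq 2$; the degenerate instance in which $C_{1}$ loses two of its four vertices is dispatched by observing that each vertex of $C_{1}$ has two edges into $C_{2}$, of which at most one is destroyed. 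So your outline is correct and matches the paper's decomposition; to complete it you need only that short counting line in place of the case-by-case check you defer.
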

\begin{proof}
If $|p-q|=1$, then $C_{p}$ and $C_{q}$ are two adjacent base cycles of $G_{k}$.
Without loss of generality, assume $t_{p}=1$ and $t_{q}=2$.
If $p\in [1,2k+1]$, then $G_{k}[V(C_{p})\backslash S]$ is connected.
Since $v(C_{p}\backslash S)\geq 3$, by Observation \ref{rule} $(iii)$, 
$G_{k}[V(C_{p})\backslash S]$ is linked to all adjacent base cycles of $C_{p}$ in $G_{k}$.

If $q=1$, then clearly $p=2$. In this case, the two adjacent base cycles are $C_{1}$ and $C_{2}$. 
Since $C_{2}\backslash S$ is a path of order ten, 
by Observation \ref{rule} $(iii)$, $G_{k}[V(C_{2})\backslash S]$ is linked to $C_{1}$ and $C_{3}$.
Thus, $G_{k}[(V(C_{2})\cup V(C_{3})\cup \cdots \cup V(C_{2k+1}))\backslash S]$ is connected.
For any vertex $u$ of $V(C_{1})\backslash S$, since $e(u,V(C_{2}))=2$,  it follows that $e(u,V(C_{2})\setminus S)\geq2-t_{2}=1$, 
confirming that $G_{k}[(V(C_{1})\cup V(C_{2}))\backslash S ]$ is connected.
Hence, $G_{k}\backslash S$ is connected.

If $q\in [2,2k+1]$, then $G_{k}[V(C_{q})\backslash S]$ has at most two components,
with at least one component $H$ satisfying $v(H)\geq\frac{12-2}{2}\geq5$.
By Observation \ref{rule} $(iii)$, $H$ is linked to all adjacent base cycles of $C_{q}$.
Since every vertex on $C_{i}$ $(i\in[1,2k])$ is a direct join-vertex, 
and by Observation \ref{c2k+1}, each component of $G_{k}[V(C_{2k+1})\setminus S]$ contains at least one direct join-vertex when $|S|\leq3$, 
the remaining component of $G_{k}[V(C_{q})\backslash S]$ is linked to at least one adjacent base cycle of $C_{q}$.
Additionally, by Observation \ref{rule} $(i)$ and $(ii)$, $e(V(C_{p})\backslash S,V(C_{q})\backslash S)\geq 8-2t_{p}-2t_{q}\geq2$.
It implies $G_{k}\backslash S$ is connected.

If $|p-q|\geq2$, then $C_{p}$ and $C_{q}$ are non-adjacent base cycles of $G_{k}$.
Clearly $G_{k}\backslash S$ is connected,
the analysis follows analogously to the case $|p-q|=1$. 
Thus, Claim \ref{claim2 in theorem 6.1} holds.
\end{proof}

\begin{claim}\label{claim3 in theorem 6.1}
If there exist three integers $p,q,r\in [1,2k+1]$ such that $t_{p}=t_{q}=t_{r}=1$.
Then $G_{k}\backslash S$ is connected.
\end{claim}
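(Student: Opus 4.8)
The plan is to exploit the fact that, under the hypothesis $t_p=t_q=t_r=1$ with $p,q,r$ distinct and $|S|\le 3$, we must have $|S|=3$ and $t_i\le 1$ for \emph{every} $i\in[1,2k+1]$; in other words, $S$ deletes at most one vertex from each base cycle. This is the most benign of the three cases, since no cycle is ever cut into more than one arc, so I expect no delicate component analysis to be required.

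First I would observe that $G_{k}[V(C_{i})\setminus S]$ is connected for every $i$. Indeed, if $t_{i}=0$ then $G_{k}[V(C_{i})]$ is the entire cycle $C_{i}$, and if $t_{i}=1$ then deleting a single vertex from a cycle leaves a path; in either case $V(C_{i})\setminus S$ induces a connected subgraph. This treats the innermost $4$-cycle $C_{1}$ and each $12$-cycle $C_{j}$ $(2\le j\le 2k+1)$ uniformly, so after deleting $S$ each ``row'' survives as a single connected arc.

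Next I would show that any two consecutive surviving arcs are linked. For adjacent base cycles $C_{i},C_{i+1}$, Observation \ref{rule}(i) gives $e(V(C_{i}),V(C_{i+1}))\ge 8$, while Observation \ref{rule}(ii) shows that deleting a single vertex from either cycle destroys at most two of these crossing edges. Hence
\[
e\bigl(V(C_{i})\setminus S,\;V(C_{i+1})\setminus S\bigr)\;\ge\;8-2t_{i}-2t_{i+1}\;\ge\;8-2-2\;=\;4\;>\;0,
\]
so the connected arc on $C_{i}$ and the connected arc on $C_{i+1}$ share at least one edge. Chaining these facts along $C_{1},C_{2},\dots,C_{2k+1}$ then yields the conclusion: the connected pieces $G_{k}[V(C_{i})\setminus S]$ form a sequence in which each consecutive pair is joined by an edge, so their union is connected, and since these pieces exhaust $V(G_{k})\setminus S$, the graph $G_{k}\setminus S$ is connected, establishing Claim \ref{claim3 in theorem 6.1}.

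The only point demanding any care is the uniform crossing-edge estimate between consecutive cycles, but because at most one vertex is removed per cycle in this case, Observations \ref{rule}(i)--(ii) leave a comfortable surplus of at least four crossing edges between adjacent arcs. Consequently the stronger tools needed in Claims \ref{claim1 in theorem 6.1} and \ref{claim2 in theorem 6.1}---namely Observation \ref{rule}(iii) and Observation \ref{c2k+1}, which handle the situation where some cycle is split into several components---are not required here.
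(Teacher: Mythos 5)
Your proof is correct. The one point worth checking is your opening deduction: since $p,q,r$ index three distinct base cycles and $|S|\le 3$, indeed $|S|=3$ and $t_i\le 1$ for every $i\in[1,2k+1]$, so no base cycle is ever split into more than one arc; and the crossing-edge estimate $e(V(C_i)\setminus S,\,V(C_{i+1})\setminus S)\ge 8-2t_i-2t_{i+1}\ge 4>0$ from Observations \ref{rule}(i)--(ii) is legitimate because a vertex of $S$ lying in some other cycle cannot destroy an edge of $E(V(C_i),V(C_{i+1}))$. Granting that, chaining the surviving arcs from $C_1$ to $C_{2k+1}$ does give connectivity in one stroke. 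This is a genuinely more uniform route than the paper's: the paper splits into three subcases according to whether $C_p,C_q,C_r$ are pairwise non-adjacent, three consecutive cycles, or a mixture, applies the edge count only to the consecutive pairs among $\{C_p,C_q,C_r\}$, and attaches the remaining untouched cycles via Observation \ref{rule}(iii). Your version needs neither the case analysis nor Observations \ref{rule}(iii) and \ref{c2k+1}, which are really only forced in Claims \ref{claim1 in theorem 6.1} and \ref{claim2 in theorem 6.1}, where a cycle can be cut into several components; the paper's phrasing simply keeps the three claims structurally parallel. Both arguments are valid, and yours is the shorter of the two.
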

\begin{proof}
Since $t_{p}=t_{q}=t_{r}=1$, $G_{k}[V(C_{p}\setminus S]$, $G_{k}[V(C_{q}\setminus S]$ and $G_{k}[V(C_{r}\setminus S]$ are connected graphs of order at least three.
Suppose $C_{p}$, $C_{q}$ and $C_{r}$ are three non-adjacent base cycles of $G_{k}$.
By Observation \ref{rule} $(iii)$, $G_{k}[V(C_{p}\setminus S]$, $G_{k}[V(C_{q}\setminus S]$ and $G_{k}[V(C_{r}\setminus S]$ are each connected to all their adjacent base cycles in $G_{k}$.
Thus, $G_{k}\backslash S$ is clearly connected.

Now, assume $C_{p}$, $C_{q}$ and $C_{r}$ are three consecutive adjacent base cycles of $G_{k}$.
By Observation \ref{rule} $(i)$ and $(ii)$, $e(V(C_{p})\setminus S, V(C_{q})\setminus S)\geq 8-2t_{p}-2t_{q}\geq 8-2-2=4$
and $e(V(C_{q})\setminus S, V(C_{r})\setminus S)\geq 8-2t_{q}-2t_{r}\geq 8-2-2=4$.
Therefore, $G_{k}[(V(C_{p})\cup V(C_{q}) \cup V(C_{r}))\setminus S]$ is connected.
Since $v(C_{p}\setminus S)\geq 3$ and $v(C_{r}\setminus S)\geq 3$, 
by Observation \ref{rule} $(iii)$, $G_{k}[V(C_{p}\setminus S]$ and $G_{k}[V(C_{r}\setminus S]$ are each connected to all their adjacent base cycles in $G_{k}$.
Hence, $G_{k}\backslash S$ is connected.

Suppose that two of $C_{p}$, $C_{q}$ and $C_{r}$ are non-adjacent in $G_{k}$, 
then clearly $G_{k}\backslash S$ is connected,
the proof follows similarly to the above argument.
Claim \ref{claim3 in theorem 6.1} holds.
\end{proof}

Hence, for any subset $S$ with $|S|\leq3$, $G_{k}\backslash S$ is connected,
Thus $G_{k}$ is $4$-connected. 
}

\end{document}